\DeclareTextCommand{\polhk}{OT4}{\k}
\DeclareTextCommand{\polhk}{T1}{\k}
\theoremstyle{plain}
\newtheorem{theorem}{Theorem}[section]
\newtheorem{corollary}[theorem]{Corollary}
\newtheorem{lemma}[theorem]{Lemma}
\newtheorem{proposition}[theorem]{Proposition}
\theoremstyle{definition}
\newtheorem{definition}[theorem]{Definition}
\newtheorem{remark}[theorem]{Remark}
\newtheorem{example}[theorem]{Example}
\numberwithin{equation}{section}
\newcommand{\R}{\mathbb{R}}
\newcommand{\E}{\mathbb{E}}
\newcommand{\Norm}[1]{\left\| #1 \right\|}
\newcommand{\ud}{\ensuremath{\mathrm{d}}}
\title{Invariant measures for the nonlinear stochastic heat equation with no drift term}
\author{
  Le Chen           \\Auburn University\\\url{le.chen@auburn.edu}\\ \and
  Nicholas Eisenberg\\Auburn University\\\url{nze0019@auburn.edu}
  }
\date{\today}
\begin{document}
\maketitle

\begin{abstract}
  This paper deals with the long term behavior of the solution to the nonlinear stochastic heat
  equation $\partial u /\partial t - \frac{1}{2}\Delta u = b(u)\dot{W}$, where $b$ is assumed to be
  a globally Lipschitz continuous function and the noise $\dot{W}$ is a centered and spatially
  homogeneous Gaussian noise that is white in time. Using the moment formulas obtained
  in~\cite{chen.huang:19:comparison,chen.kim:19:nonlinear}, we identify a set of conditions on the
  initial data, the correlation measure and the weight function $\rho$, which will together
  guarantee the existence of an invariant measure in the weighted space $L^2_\rho(\R^d)$. In
  particular, our result includes the \textit{parabolic Anderson model} (i.e., the case when $b(u) =
  \lambda u$) starting from the Dirac delta measure.
\end{abstract}

\smallskip

\noindent{\textit{\noindent MSC 2010 subject classification}}: 60H15, 60H07, 60F05.
\smallskip

\noindent{\textit{Keywords}}: Stochastic heat equation, parabolic Anderson model, invariant measure,
Dirac delta initial condition, weighted $L^2$ space, Mat\'ern class of correlation functions, Bessel
kernel. \smallskip

\setcounter{tocdepth}{2}
{
  \hypersetup{linkcolor=black}
  \tableofcontents
}

\section{Introduction}
In this paper, we study the following \textit{nonlinear stochastic heat equation} (SHE):
\begin{equation}\label{E:SHE}
    \begin{cases}
     \dfrac{\partial u}{\partial t}(t,x)-\dfrac{1}{2}\Delta u(t,x) = b(x,u(t,x))\dot W(t,x) & \text{$x\in \R^d$, $t>0$}, \\
     u(0,\cdot) = \mu(\cdot).
   \end{cases}
\end{equation}
The noise, $\dot{W}(t,x)$, is a centered Gaussian noise that is white in time and homogeneously
colored in space defined on a complete probability space $(\Omega, \mathcal{F}, \mathbb{P})$ with
the natural filtration $\{\mathcal{F}_t \}_{t\ge 0}$ generated by the noise. Its covariance
structure is given by
\begin{align}\label{E:Cor}
  J(\psi,\phi) \coloneqq \E \left[ W\left(\psi\right) W\left(\phi\right) \right]
  = \int_0^\infty \ud s \int_{\R^d} \Gamma(\ud x)(\psi(s,\cdot)*\widetilde{\phi}(s,\cdot))(x),
\end{align}
where $\psi$ and $\phi$ are continuous and rapidly decreasing functions, $\widetilde{\phi}(x)
\coloneqq \phi(-x)$, ``$*$'' refers to the convolution in the spatial variable, and $\Gamma$ is a
nonnegative and nonnegative definite tempered measure on $\R^d$ that is commonly referred to as the
\textit{correlation measure}. The Fourier transform of $\Gamma$ (in the generalized sense) is also a
nonnegative and nonnegative definite tempered measure, which is usually called the \textit{spectral
measure} and is denoted by $\widehat{f}(\ud \xi)$ (see~\eqref{E:Fourier} for the convention of
Fourier transform). Moreover, in the case where $\Gamma$ has a density $f$, namely, $\Gamma(\ud x) =
f(x) \ud x$, we write $\widehat{f}(\ud \xi)$ as $\widehat{f}(\xi) \ud \xi$.

The initial condition, $\mu$, is a deterministic, locally finite, regular, signed Borel measure that
satisfies the following integrability condition at infinity:
\begin{equation}\label{A:icon}
  \int_{\R^d}\exp\left(-a|x|^2\right) |\mu|(\ud x) < \infty \quad \text{for all $a>0$},
\end{equation}
where $|\mu| = \mu_++\mu_-$ and $\mu = \mu_+-\mu_-$ refers to the \textit{Hahn decomposition} of the
measure $\mu$. Initial conditions of this type, introduced in~\cite{chen.dalang:15:moments*1} and
further explored in~\cite{chen.kim:19:nonlinear,chen.huang:19:comparison}, are called \textit{rough
initial conditions}.

The function $b(x,u)$ is uniformly bounded in the first variable and Lipschitz continuous in the
second variable, i.e., for some constants $L_b>0$ and $L_0\ge 0$,
\begin{equation}\label{E:lipcon}
  |b(x,u)-b(x,v)| \le L_b |u-v| \quad \text{and} \quad
  |b(x,0)|        \le L_0       \quad \text{for all $u,v \in \R$ and $x \in \R^d.$ }
\end{equation}
In particular, our assumption allows the linear case $b(x,u) = \lambda u$, which is usually referred
to as the \textit{parabolic Anderson model} (PAM)~\cite{carmona.molchanov:94:parabolic}.

The SPDE~\eqref{E:SHE} is understood in its \textit{mild form}:
\begin{equation}\label{E:gmsol}
  u(t,x) = J_0(t,x;\mu)+ \int_0^t\int_{\R^d}b(y,u(s,y))G(t-s,x-y)W(\ud s,\ud y),
\end{equation}
where $G(t,x) = (2 \pi t)^{-d/2}\exp\left(-\left(2t\right)^{-1}|x|^2\right)$ is the heat kernel,
\begin{gather}\label{E:Ju}
   J_0(t,x) = J_0(t,x;\mu) \coloneqq (G(t,\cdot) * \mu)(x) = \int_{\R^d} G(t,x-y) \mu(\ud y)
\end{gather}
is the solution to the homogeneous equation, and the stochastic integral is the \textit{Walsh
integral}. We refer the interested readers
to~\cite{chen.kim:19:nonlinear,dalang.khoshnevisan.ea:09:minicourse,dalang:99:extending,walsh:86:introduction}
for more details of this setup. \bigskip

The aim of this paper is to investigate the conditions required to guarantee the existence of an
invariant measure for the solution to~\eqref{E:SHE}, which is a crucial step for the study of the
ergodicity of the system that requires the corresponding uniqueness. We direct the interested
readers to~\cite{cerrai:01:second,da-prato.zabczyk:96:ergodicity, da-prato.zabczyk:14:stochastic}
for more details about the invariant measure, its existence/uniqueness, and the ergodicity of the
system. The general procedure for finding the invariant measure, especially in the setting
of~\eqref{E:SHE}, has been laid out by Tessitore and Zabczyk~\cite{tessitore.zabczyk:98:invariant},
which involves two parts: first one needs to show that the laws of the solution to~\eqref{E:SHE}
form a family of \emph{Markovian transition functions} on some Hilbert space, $H$, and the
corresponding \emph{Markovian semigroup} is Feller; and second one needs to establish that the
moments of solution are bounded in time (see~\eqref{E:BddMnt} below). For the second point, it
requires some substantial work (see Theorems~\ref{T:Lrho} and~\ref{T:Main} below). On the other
hand, the first point has been shown to be true for our case of interest (see, e.g.,~\cite[Chapter
9]{da-prato.zabczyk:14:stochastic}) with the following weighted $L^2 (\R^d)$ space as our underlying
Hilbert space as in~\cite{tessitore.zabczyk:98:invariant}:

\begin{definition}[\cite{tessitore.zabczyk:98:invariant}]\label{D:Rho}
  A function $\rho:\R^d\mapsto \R$ is called an \textit{admissible weight function} if it is a
  strictly positive, bounded, continuous, and $L^1 (\R^d)$-integrable function such that for all
  $T>0$, there exists a constant $C_\rho(T)$ such that
  \begin{equation}\label{E:aw}
    \big(G(t,\cdot)*\rho(\cdot)\big)(x) \le C_\rho(T) \rho(x)
    \quad \text{for all $t\in[0,T]$ and $x\in\R^d$.}
  \end{equation}
  Moreover, we denote by $L^2_{\rho}(\R^d)$ the corresponding Hilbert space of $\rho$-weighted
  square integrable functions, and we use $\langle \cdot, \cdot \rangle_{\rho}$ and
  $\Norm{\cdot}_{\rho}$ to denote the inner product and norm on $L^2_{\rho}(\R^d)$:
  \begin{align*}
  		\langle f,g \rangle_{\rho} \coloneqq \int_{\R^d} f(x) g(x) \rho(x) \ud x \quad \text{and} \quad
      \Norm{f}_{\rho} \coloneqq \int_{\R^d} |f(x)|^2 \rho(x) \ud x.
  \end{align*}
\end{definition}

Accordingly, we will prove the existence of the invariant measure following the same strategy as
in~\cite{tessitore.zabczyk:98:invariant}. Let $\mathscr{L}(u(t,\cdot;\mu)$ denote the law of
$u(t,\cdot)$ starting from $\mu$ at $t = 0$. We will first establish the tightness of
$\{\mathscr{L}(u(t,\cdot;\mu)\}_{t>t_0}$ for some $ t_0\ge 0$. A critical step in obtaining this
tightness result is to show that the following moment is uniformly bounded in time (see
Theorem~\ref{T:Main}):
\begin{align}\label{E:BddMnt}
  \sup_{t>0} \E\left(\Norm{u(t,\cdot)}_{\rho}^2\right) <\infty.
\end{align}
Then we will apply the \textit{Krylov-Bogoliubov theorem} (see,
e.g.,~\cite[Theorem~11.7]{da-prato.zabczyk:14:stochastic}) to construct an invariant measure via
\begin{equation}\label{E:InvMeasForm}
  \eta(A) = \lim_{n \to \infty} \frac{1}{T_n} \int_{t_0}^{T_n + t_0} \mathscr{L}(u(t,\cdot; \mu))(A) \ud t,
\end{equation}
for some sequence $\{T_n\}_{n\ge 1}$ with $T_n\uparrow \infty$. \medskip

In the literature, the existence of invariant measure of the stochastic heat equation is more
commonly studied with a drift term; we will postpone a brief review of this case to
Section~\ref{SS:Drift}. In contrast, the existence of an invariant measure under the settings of
equation~\eqref{E:SHE} has rarely been studied. To the best of our knowledge, this current article
and the one by Tessitore and Zabczyk~\cite{tessitore.zabczyk:98:invariant} are the only papers that
consider the case where the spatial domain is the whole space $\R^d$, the diffusion term, $b(x,u)$,
is globally Lipschitz in the second variable, uniformly bounded in the first variable
(see~\eqref{E:lipcon}) and there is no additional negative drift term to help. The major challenge
is to identify the right conditions so that the probability moments of the solution are bounded in
time (see~\eqref{E:BddMnt}). The solution to~\eqref{E:SHE} is usually \textit{intermittent}, namely,
its moments possess a certain exponential growth in $t$; see,
e.g.,~\cite{carmona.molchanov:94:parabolic,foondun.khoshnevisan:09:intermittence}. For that reason,
one has to impose some additional assumptions either on the initial conditions, or the noise, or the
coefficients of~\eqref{E:SHE}, or all of them in order to control the growth of the moments. The
moment formulas obtained in~\cite{chen.huang:19:comparison,chen.kim:19:nonlinear} play an important
role in this context.

Here we emphasize that we study the invariant measure using the Walsh random field
approach~\cite{walsh:86:introduction}, whereas such studies are mostly carried out under the
framework of the stochastic evolution in Hilbert spaces~\cite{da-prato.zabczyk:14:stochastic}. Even
though both theories are equivalent (see~\cite{dalang.quer-sardanyons:11:stochastic}), the
differences in many technical aspects are still substantial. As the random field approach often
produces results that are more explicit, we try to use this approach to obtain more precise
conditions for the existence of an invariant measure. For the initial conditions, the results
in~\cite{tessitore.zabczyk:98:invariant} allow for bounded $L_\rho^2 (\R^d)$ functions, although the
authors proved their main result---Theorem 3.3 \textit{ibid.}---only for the constant one initial
condition. Here we give the precise conditions on the initial condition (see~\eqref{E:InitData}
below) which allows a wider class of data, including unbounded functions and measures such as the
Dirac delta measure (see Examples~\ref{Ex:delta} and~\ref{Ex:Riesz}). Note that the Dirac delta
initial measure plays a prominent role in the study of the stochastic heat equation; see,
e.g.,~\cite{amir.corwin.ea:11:probability}. Regarding the noise, we give an explicit and easily
verifiable condition---\eqref{E:Dalang-00}---on the spectral density $\widehat{f}$ and present a few
concrete examples (see Section~\ref{SS:Bassel}). The comparisons of our conditions with those
obtained by Tessitore and Zabyczyk~\cite{tessitore.zabczyk:98:invariant} are given in
Section~\ref{SS:TessZabz}.


Our proof relies on a factorization representation for the solution $u(t,x)$ to~\eqref{E:SHE} (see
Lemma~\ref{L:Factorize}), which is obtained under the random field framework, whereas such
factorization lemma has been widely used in the framework of the stochastic evolution equation in
Hilbert spaces; see Section~\ref{S:Factor} for more details. Finally, we point out that there is a
miscellany of results in Section~\ref{S:Example}, which may have independent interest. \medskip

Now we are ready to motivate the conditions that we use and present the main results.

\subsection{Main results}\label{S:Main}

As mentioned earlier, in order to have moments bounded in time as in~\eqref{E:BddMnt}, one should
better first identify the sharp conditions under which the second moments as a function of $t$, namely
$t \mapsto \E\left(u(t,x)^2\right)$, with $x$ fixed, are bounded. This question has been answered
in~\cite[Theorem 1.3 and Lemma 2.5]{chen.kim:19:nonlinear}, where necessary and sufficient
conditions are given. More precisely, to have the second moment bounded in time with $x$ fixed, one
needs to have the spatial dimension $d \ge 3$, and in addition, the spectral measure $\hat{f}$ and
Lipschitz constant $L_b$ of $b(\cdot)$ need to satisfy the following two conditions:
\begin{subequations}\label{E:U0finite}
\begin{gather}\label{E:Dalang-00}
  \Upsilon(0) \coloneqq \left(2\pi\right)^{-d} \int_{\R^d} \frac{\hat{f}(\ud \xi)}{|\xi|^2} <\infty \shortintertext{and}
  64 L^2_{b} < \frac{1}{2\Upsilon(0)} \:. \label{E:Lipb}
\end{gather}
\end{subequations}
These two conditions will guarantee the existence of the following non-empty open interval:
\begin{align}\label{E:Interval}
\left(2^7 L_b^2 \Upsilon(0), 1\right) \ne \emptyset.
\end{align}

Note that condition~\eqref{E:Dalang-00} is a strengthened version of \textit{Dalang's condition}:
\begin{align}\label{E:Dalang}
  \Upsilon(\beta) \coloneqq \left(2\pi\right)^{-d} \int_{\R^d} \frac{\hat{f}(\ud \xi)}{\beta+ |\xi|^2} <\infty,
  \quad \text{for some (and hence) all $\beta>0$.}
\end{align}
Recall that in order to obtain the H\"older continuity of the solution, one needs to
strengthen~\eqref{E:Dalang} in a different way. Indeed, what is required is that for some $\alpha\in
(0,1]$,
\begin{align}\label{E:Dalang-ab}
  \Upsilon_\alpha(\beta) \coloneqq \left(2\pi\right)^{-d} \int_{\R^d} \frac{\widehat{f}(\ud\xi)}{\left(\beta+|\xi|^2\right)^{1-\alpha}}<\infty
  \quad \text{for some (hence all) $\beta>0$;}
\end{align}
see~\cite[Theorem 1.8]{chen.huang:19:comparison} or~\cite{sanz-sole.sarra:02:holder}. Likewise, one
can further strengthen condition~\eqref{E:Dalang-ab} to
\begin{equation}\label{E:Dalang-a0}
  \Upsilon_\alpha(0) \coloneqq \left(2\pi\right)^{-d} \int_{\R^d} \frac{\hat{f}(\ud \xi)}{|\xi|^{2(1-\alpha)}} <\infty
  \quad \text{for some $\alpha\in (0,1]$.}
\end{equation}
We use the convention that when $\alpha = 0$, we simply drop it from the expression
$\Upsilon_\alpha(\beta)$, i.e., $\Upsilon(\beta) = \Upsilon_0(\beta)$. The relations of these
conditions are illustrated in Figure~\ref{F:Dalang}.


\setlength{\abovecaptionskip}{0pt plus 0pt minus 2pt}
\begin{figure}[htpb]
\begin{center}
  \usetikzlibrary{arrows.meta}
  \begin{tikzpicture}[scale = .8, x = 9em, y = 6em]
		\newcommand\inc{0.07}
		\newcommand\xo{1.2}
		\newcommand\yo{0.5}
		\newcommand\xg{3.7}
		\newcommand\yg{1.5}



		\node[text width = 13em, align = center] (01) at (\xo,\yg) {$\Upsilon(\beta)<\infty$\\ Dalang's condition \eqref{E:Dalang}};
		\node[text width = 13em, align = center] (00) at (\xo,\yo) {$\Upsilon(0)<\infty$\\ Condition \eqref{E:Dalang-00}};
		\node[text width = 13em, align = center] (10) at (\xg,\yo) {$\Upsilon_\alpha(0)<\infty$\\ Condition \eqref{E:Dalang-a0}};
		\node[text width = 13em, align = center] (11) at (\xg,\yg) {$\Upsilon_\alpha(\beta)<\infty$\\ Condition \eqref{E:Dalang-ab}};
		\node[] at (\xo,0.5*\yo+0.5*\yg) {\Large$\Uparrow$};
		\node[] at (\xg,0.5*\yo+0.5*\yg) {\Large$\Uparrow$};
		\node[] at (0.5*\xo+0.5*\xg,\yg) {\Large$\Leftarrow$};

\end{tikzpicture}
\end{center}
  \caption{Relations among conditions~\eqref{E:Dalang},~\eqref{E:Dalang-ab},~\eqref{E:Dalang-a0}
  and~\eqref{E:Dalang-00}.}
	\label{F:Dalang}
\end{figure}

We will also need the following slightly different condition:
\begin{gather}\label{E:Dalang-H}
  \mathcal{H}_{\alpha / 2} (t) < \infty \quad \text{for some $\alpha\in (0,1]$ and for all $t>0$,} \shortintertext{where}
  \mathcal{H}_{\alpha}(t) \coloneqq \int_0^t \ud r \: r^{-2\alpha} \int_{\R^d} \hat{f}(\ud \xi) \: \exp(-r|\xi|^2). \label{E:H}
\end{gather}
The quantity $\mathcal{H}_{\alpha}(t)$ will appear naturally in the proof of Lemma~\ref{L:yineq}
below. As shown in Lemma~\ref{L:HUA} below, condition~\eqref{E:Dalang-a0} will imply
condition~\eqref{E:Dalang-H}. However, if one assumes~\eqref{E:Dalang-00}, then these two conditions
become equivalent.

We are now ready to state our two main results of the paper.

\begin{theorem}\label{T:Lrho}
  Let $u(t,x;\mu)$ be the solution to~\eqref{E:SHE} starting from $\mu$ which
  satisfies~\eqref{A:icon}. Assume that
  \begin{enumerate}[(i)]
    \item $\rho: \R^d \to \R_+$ is a nonnegative $L^1(\R^d)$ function;
    \item for all $t > 0$, the initial condition $\mu$ satisfies $\mathcal{G}_{\rho}(t;|\mu|) <
      \infty$ where
      \begin{equation}\label{E:G}
        \mathcal{G}_{\rho}(t;\mu)
        \coloneqq \int_{\R^d} J_0^2\left(t,x;\mu\right) \rho(x)\: \ud x;
      \end{equation}
    \item the spectral measure $\widehat{f}$ and the Lipschitz constant $L_b$ satisfy the two
      conditions in~\eqref{E:U0finite}.
  \end{enumerate}
  Then there exists a unique $L^2 (\Omega)$-continuous solution $u(t,x)$ such that for some constant
  $C>0$, which does not depend on $t$, the following holds:
  \begin{equation}\label{E:uInLrho}
    \E\left(\Norm{u(t,\cdot; \mu)}_{\rho}^2\right)
    \le C \mathcal{G}_{\rho}(t; \mu^*)
    < \infty, \quad \text{for any $t > 0$},
  \end{equation}
  where $ \mu^* \coloneqq 1 + |\mu|$.
\end{theorem}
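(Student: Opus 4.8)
The plan is to reduce \eqref{E:uInLrho} to the \emph{pointwise} second-moment estimate
\begin{equation*}
  \E\left(u(t,x)^2\right)\le C\,J_0(t,x;\mu^*)^2 \qquad\text{for all }t>0,\ x\in\R^d ,
\end{equation*}
with $C$ independent of $(t,x)$; granting this, \eqref{E:uInLrho} follows by multiplying by $\rho(x)$ and integrating, while $\mathcal G_\rho(t;\mu^*)<\infty$ follows from assumptions (i) and (ii) and the Cauchy--Schwarz inequality applied to the cross term in $J_0(t,\cdot;\mu^*)^2=\bigl(1+J_0(t,\cdot;|\mu|)\bigr)^2$. To set up the pointwise bound, write the second moment via the Walsh isometry as $\E(u(t,x)^2)=J_0(t,x;\mu)^2+\mathcal I(t,x)$, where
\begin{equation*}
  \mathcal I(t,x)=\int_0^t\!\ud s\int_{\R^{2d}}\!\Gamma(\ud z)\,G(t-s,x-y)\,G(t-s,x-y-z)\,\E\!\left[b(y,u(s,y))\,b(y+z,u(s,y+z))\right]\ud y ,
\end{equation*}
bound the expectation by $(L_0+L_b v(s,y))(L_0+L_b v(s,y+z))$ using \eqref{E:lipcon} and Cauchy--Schwarz in $\Omega$, with $v(s,y):=\Norm{u(s,y)}_{L^2(\Omega)}$, and introduce the normalised quantity $A(t):=\sup_{0<s\le t}\sup_{y\in\R^d} v(s,y)/J_0(s,y;\mu^*)$. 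Feeding in the bootstrap hypothesis $v(s,y)\le A(t)\,J_0(s,y;\mu^*)$ and using $L_0+L_b A(t)J_0(s,y;\mu^*)\le (L_0+L_b A(t))\,J_0(s,y;\mu^*)$ (recall $J_0(\cdot;\mu^*)\ge J_0(\cdot;1)=1$) reduces everything to estimating
\begin{equation*}
  \mathcal J(t,x):=\int_0^t\!\ud s\int_{\R^{2d}}\!\Gamma(\ud z)\,G(t-s,x-y)\,G(t-s,x-y-z)\,J_0(s,y;\mu^*)\,J_0(s,y+z;\mu^*)\,\ud y .
\end{equation*}

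The heart of the argument is to collapse $\mathcal J(t,x)$ explicitly. Expanding each factor as $J_0(s,\cdot;\mu^*)=\int G(s,\cdot-a)\,\mu^*(\ud a)$ and applying twice the Gaussian identities
\begin{equation*}
  G(p,x-y)G(q,y-a)=G(p+q,x-a)\,G\!\left(\tfrac{pq}{p+q},\,y-\tfrac{qx+pa}{p+q}\right),
  \qquad
  \int_{\R^d} G(\sigma,y-m_1)G(\sigma,y-m_2)\,\ud y=G(2\sigma,m_1-m_2) ,
\end{equation*}
one integrates out $y$ and then $z$ (all integrands being nonnegative); with $p=t-s$ and $q=s$ this gives
\begin{equation*}
  \mathcal J(t,x)=\int_0^t\!\ud s\int_{\R^{2d}}\! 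G(t,x-a)\,G(t,x-b)\,\Gamma_{2(t-s)s/t}\!\left(\tfrac{(t-s)(b-a)}{t}\right)\mu^*(\ud a)\,\mu^*(\ud b),
  \qquad \Gamma_\sigma:=G(\sigma,\cdot)*\Gamma .
\end{equation*}
Since $\Gamma$ is non-negative definite, so is the smooth function $\Gamma_\sigma$ — its Fourier transform is the finite nonnegative measure $\mathrm{e}^{-\sigma|\xi|^2/2}\widehat f(\ud\xi)$ — hence $0\le\Gamma_\sigma(w)\le\Gamma_\sigma(0)=(2\pi)^{-d}\int_{\R^d}\mathrm{e}^{-\sigma|\xi|^2/2}\widehat f(\ud\xi)$. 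Pulling $\Gamma_{2(t-s)s/t}(0)$ out of the $\mu^*\otimes\mu^*$ integral recovers the factor $J_0(t,x;\mu^*)^2$, so it remains to bound $\int_0^t\Gamma_{2(t-s)s/t}(0)\,\ud s=(2\pi)^{-d}\int_{\R^d}\widehat f(\ud\xi)\int_0^t\mathrm{e}^{-(t-s)s|\xi|^2/t}\,\ud s$ uniformly in $t$. This is exactly where assumption (iii) is used: the elementary bound $\int_0^t\mathrm{e}^{-(t-s)s\lambda/t}\,\ud s\le 4/\lambda$ (split at $s=t/2$ and use $(t-s)s/t\ge s/2$ on $[0,t/2]$) yields $\int_0^t\Gamma_{2(t-s)s/t}(0)\,\ud s\le 4\,\Upsilon(0)$, which is finite by \eqref{E:Dalang-00} and, crucially, independent of $t$ (and of $x,a,b$).

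Combining this with $J_0(t,x;\mu)^2\le J_0(t,x;|\mu|)^2\le J_0(t,x;\mu^*)^2$ gives $\E(u(t,x)^2)\le\bigl(1+4\Upsilon(0)(L_0+L_b A(t))^2\bigr)J_0(t,x;\mu^*)^2$. Since $s\mapsto A(s)$ is nondecreasing, running this for every $s\le t$ and taking the supremum over $s$ and $x$ produces the self-improving inequality
\begin{equation*}
  A(t)^2\le 1+4\,\Upsilon(0)\,(L_0+L_b A(t))^2 .
\end{equation*}
Condition \eqref{E:Lipb} comfortably implies $4\Upsilon(0)L_b^2<1$ (the wider gap that the constant $2^7$ in \eqref{E:Interval} allows for corresponds to a rougher route via $(a+b)^2\le 2a^2+2b^2$ and symmetrisation of the $\Gamma$-integral), so this quadratic inequality pins $A(t)$ below the larger root $A_\ast$ of $(1-4\Upsilon(0)L_b^2)a^2-8\Upsilon(0)L_0L_b\,a-(1+4\Upsilon(0)L_0^2)=0$ — a finite constant depending only on $\Upsilon(0),L_0,L_b$. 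Hence $\E(u(t,x)^2)\le A_\ast^2 J_0(t,x;\mu^*)^2$ for all $t>0$, and integrating against $\rho$ gives \eqref{E:uInLrho} with $C=A_\ast^2$.

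The step needing the most care is the legitimacy of the bootstrap, which is circular unless $A(t)<\infty$ is known a priori for each fixed $t$. For rough data such as $\mu=\delta_0$ the unnormalised supremum $\sup_y v(s,y)$ is $+\infty$ for every $s>0$, so the normalisation by $J_0(\cdot;\mu^*)$ is indispensable, and showing that $v(s,y)/J_0(s,y;\mu^*)$ stays locally bounded as $s\downarrow 0$ and as $|y|\to\infty$ is precisely where the factorisation representation of Lemma~\ref{L:Factorize} enters; alternatively one runs the whole estimate along the Picard iteration $u^{(n)}$, for which $A^{(0)}\le1$ and the recursion $A^{(n+1)}(t)\le\bigl(1+4\Upsilon(0)(L_0+L_bA^{(n)}(t))^2\bigr)^{1/2}$ keeps every $A^{(n)}(t)$ below $A_\ast$, so the limit inherits the bound. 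Existence, uniqueness and $L^2(\Omega)$-continuity of the solution are standard under Dalang's condition \eqref{E:Dalang}, which \eqref{E:Dalang-00} implies. Apart from this bookkeeping, the only genuinely restrictive inputs are exactly those in (iii): $\Upsilon(0)<\infty$ makes the time-integral of the noise kernel converge on all of $[0,\infty)$ rather than only on bounded intervals, and the smallness of $L_b$ closes the quadratic and thereby defeats the intermittent, exponential-in-$t$ growth the moments would otherwise display.
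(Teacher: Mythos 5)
Your proposal is correct, but it takes a genuinely different route from the paper. The paper's own proof of Theorem~\ref{T:Lrho} is essentially two lines: it invokes the pointwise $p$-th moment bound $\Norm{u(t,x)}_p\le C_p\left(1+(G(t,\cdot)*|\mu|)(x)\right)$ of Corollary~\ref{C:Mom} --- which is itself imported from~\cite{chen.huang:19:comparison} (Theorem~1.7 there, via the function $H(t;\gamma_p)$) together with the boundedness criterion $2\gamma_p\Upsilon(0)<1$ from~\cite{chen.kim:19:nonlinear} --- and then squares and integrates against $\rho$ exactly as you do in your final step. You instead rederive the $p=2$ case of that pointwise bound from scratch: the Walsh isometry, the Gaussian collapse of the kernel integral via~\eqref{E:gsim}, the bound $\Gamma_\sigma(w)\le\Gamma_\sigma(0)$ from positive-definiteness, the uniform-in-$t$ estimate $\int_0^t \e^{-(t-s)s\lambda/t}\,\ud s\le 4/\lambda$, and the quadratic fixed-point inequality for the normalised supremum $A(t)$. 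All of these steps check out, and your resolution of the a priori finiteness of $A(t)$ by running the recursion along the Picard iterates (with $A^{(0)}\le 1\le A_*$ and $\Phi$ monotone, so every $A^{(n)}\le A_*$) is the standard and correct way to close the loop; the alternative appeal to the factorization lemma is unnecessary and would be the wrong tool here. What each approach buys: the paper's route is short and yields all $p$-th moments with $1/p\in\left(64L_b^2\Upsilon(0),1\right)$, which the paper genuinely needs later (part (2) of Lemma~\ref{L:yineq} uses $q$-th moments for large $q$), at the cost of black-boxing $H(t;\gamma_p)$ and carrying the constant $2^7$ in~\eqref{E:Interval}. Your route is self-contained and shows that for the second moment alone the much weaker smallness condition $4\Upsilon(0)L_b^2<1$ suffices, but it does not by itself supply the higher moments required elsewhere in the paper.
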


This theorem will be proved in Section~\ref{S:Lrho}. We now state and prove a corollary which shows
that the solution to~\eqref{E:SHE} starting from an $L^2_{\rho}(\R^d)$ initial condition will almost
surely be in $L^2_{\rho}(\R^d)$ for all $t >0$.

\begin{corollary}\label{C:inclusion}
  Under the same assumptions of Theorem~\ref{T:Lrho}, if in addition $\rho$ is admissible (see
  Definition~\ref{D:Rho}), then the solution $u(t,\cdot;\zeta)$ to $\eqref{E:SHE}$ almost surely
  exists in $L^2_{\rho}(\R^d)$ for all $t>0$, whenever the initial condition is also in
  $L^2_{\rho}(\R^d)$, i.e., $\zeta\in L^2_{\rho}(\R^d)$.
\end{corollary}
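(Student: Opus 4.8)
The plan is to obtain the corollary as a direct application of Theorem~\ref{T:Lrho} to the initial measure $\mu(\ud x)=\zeta(x)\,\ud x$. That theorem already provides $\E\big(\Norm{u(t,\cdot;\mu)}_\rho^2\big)\le C\,\mathcal{G}_\rho(t;\mu^*)<\infty$, and a nonnegative random variable with finite expectation is almost surely finite, so it is enough to verify the two hypotheses of Theorem~\ref{T:Lrho} that are not automatic in the present situation: that $\zeta$, regarded as a signed measure, satisfies the integrability condition~\eqref{A:icon}, and that $\mathcal{G}_\rho(t;|\zeta|)<\infty$ for every $t>0$. Hypothesis~(i) holds because an admissible weight (Definition~\ref{D:Rho}) is in particular nonnegative and $L^1(\R^d)$, and hypothesis~(iii) is part of the assumptions we are carrying over.

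To verify~\eqref{A:icon}, the first step is to read off from the admissibility inequality~\eqref{E:aw} that $\rho$ cannot decay faster than a Gaussian. Bounding $\big(G(T,\cdot)*\rho\big)(x)$ from below by restricting the convolution integral to the unit ball and using $|x-y|^2\le 2|x|^2+2$ for $y$ there, one gets $\rho(x)\ge c_T\,\e^{-|x|^2/T}$ for a constant $c_T>0$ and every $T>0$, hence $\rho(x)^{-1}\le c_T^{-1}\,\e^{|x|^2/T}$. Then, for any $a>0$, Cauchy--Schwarz yields
\[
  \int_{\R^d}\e^{-a|x|^2}\,|\zeta(x)|\,\ud x
  \le \left(\int_{\R^d}\e^{-2a|x|^2}\,\rho(x)^{-1}\,\ud x\right)^{1/2}
      \left(\int_{\R^d}\rho(x)\,|\zeta(x)|^2\,\ud x\right)^{1/2},
\]
where the second factor is finite since $\zeta\in L^2_\rho(\R^d)$, and the first is finite once $T$ is chosen with $T>1/(2a)$; thus~\eqref{A:icon} holds.

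For the bound on $\mathcal{G}_\rho$ in~\eqref{E:G}, the key observation is that $y\mapsto G(t,x-y)$ is a probability density, so Cauchy--Schwarz (equivalently Jensen) under the convolution gives $J_0(t,x;|\zeta|)^2=\big((G(t,\cdot)*|\zeta|)(x)\big)^2\le\big(G(t,\cdot)*|\zeta|^2\big)(x)$. Multiplying by $\rho(x)$, integrating over $\R^d$, and applying Tonelli together with $G(t,x-y)=G(t,y-x)$ gives
\[
  \mathcal{G}_\rho(t;|\zeta|)\le\int_{\R^d}|\zeta(y)|^2\,\big(G(t,\cdot)*\rho\big)(y)\,\ud y .
\]
Admissibility now enters: for $t\in(0,T]$ we have $\big(G(t,\cdot)*\rho\big)(y)\le C_\rho(T)\,\rho(y)$, hence $\mathcal{G}_\rho(t;|\zeta|)\le C_\rho(T)\int_{\R^d}\rho(y)|\zeta(y)|^2\,\ud y<\infty$, and since $T>0$ is arbitrary this holds for all $t>0$. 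With both hypotheses checked, Theorem~\ref{T:Lrho} gives $\E\big(\Norm{u(t,\cdot;\zeta)}_\rho^2\big)<\infty$, so $u(t,\cdot;\zeta)\in L^2_\rho(\R^d)$ almost surely for every $t>0$ (the null set possibly depending on $t$). The one genuinely delicate point is the very first step --- certifying that an $L^2_\rho$ function qualifies as an admissible initial datum for Theorem~\ref{T:Lrho} by extracting the Gaussian-type lower bound for $\rho$ from~\eqref{E:aw}; everything else reduces to a line of Cauchy--Schwarz and Tonelli, driven again by~\eqref{E:aw}.
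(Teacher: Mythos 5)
Your proposal is correct and follows essentially the same route as the paper: the key step in both is the pointwise bound $J_0(t,x;|\zeta|)^2\le \big(G(t,\cdot)*|\zeta|^2\big)(x)$ via Jensen/H\"older, followed by Tonelli and the admissibility inequality~\eqref{E:aw} to get $\mathcal{G}_\rho(t;\cdot)\le C_\rho(T)\Norm{\cdot}_\rho^2<\infty$ (the paper applies this to $\zeta^*=1+|\zeta|$, which lies in $L^2_\rho$ since $\rho\in L^1$, but your argument transfers verbatim). Your additional verification that any $\zeta\in L^2_\rho(\R^d)$ automatically satisfies~\eqref{A:icon}, via the Gaussian lower bound on $\rho$ extracted from~\eqref{E:aw}, is a correct and worthwhile supplement that the paper leaves implicit in the phrase ``under the same assumptions.''
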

\begin{proof}
  Choose and fix an arbitrary $\zeta \in L^2_{\rho}(\R^d)$ and set $\zeta^* = 1+|\zeta|$. It is clear
  that $\zeta^*\in L^2_\rho(\R^d)$. By (ii) of Theorem~\ref{T:Lrho}, it suffices to show the
  finiteness of $\mathcal{G}_{\rho}(t,;\zeta^*)$ for all $t>0$. Indeed, by H\"older's inequality,
  \begin{align*}
        \mathcal{G}_{\rho}(t,;\zeta^*)
     = \int_{\R^d} \left(\int_{\R^d} G(t,x-y) \zeta^* (y) \ud y\right)^2 \rho(x) \ud x
    \le \int_{\R^d} \int_{\R^d} G(t,x-y) \zeta^* (y)^2 \rho(x) \ud y \ud x.
  \end{align*}
  Now for any $t>0$, choose $T>t$ and let $C_{\rho}(T)$ be as in~\eqref{E:aw}. Then,
  \begin{align*}
      \int_{\R^d} \int_{\R^d} G(t,x-y) \zeta^* (y)^2 \rho(x) \ud y \ud x
    = \int_{\R^d} \left(G(t,\cdot)*\rho\right)(y) |\zeta(y)|^2 \ud y
    \le C_{\rho}(T) \Norm{\zeta^*}_{\rho}^2 < \infty.
  \end{align*}
  Since $t$ and $T$ are arbitrary, this completes the proof of the corollary.
\end{proof}

\begin{theorem}\label{T:Main}
  Let $u(t,x)$ be the solution to~\eqref{E:SHE} starting from $\mu$ and let $\rho$ be an admissible
  weight function. Assume that
  \begin{enumerate}[(i)]
    \item there exists another admissible weight $\tilde{\rho}$ such that
      \begin{align}\label{E:rhorho}
        \int_{\R^d}\dfrac{\rho(x)}{\tilde{\rho}(x)}\ud x < \infty;
      \end{align}
    \item the weight function $\tilde{\rho}$ and the initial condition satisfy the following
      condition:
      \begin{equation}\label{E:InitData}
        \limsup_{t>0}\: \mathcal{G}_{\tilde{\rho}}(t;|\mu|) < \infty;
      \end{equation}
    \item the spectral measure $\widehat{f}$ and the Lipschitz constant $L_b$ satisfy the two
      conditions in~\eqref{E:U0finite};
    \item for some $\alpha\in \left(2^7\Upsilon(0)L_b^2,1\right)$ (see~\eqref{E:Interval}), the
      spectral measure $\widehat{f}$ satisfies~\eqref{E:Dalang-a0}.
  \end{enumerate}
  Then we have that
  \begin{enumerate}[(a)]
    \item for any $\tau>0$, the sequence of laws of $\{\mathcal{L}u(t,\cdot;\mu)\}_{t\ge \tau}$ is
      tight, i.e., for any $\epsilon \in (0,1)$, there exists a compact set $\mathscr{K}\subset
      L^2_{\rho}(\R^d)$ such that
      \begin{equation}\label{E:cpt}
        \mathscr{L}u(t,\cdot;\mu)(\mathcal{K}) \ge 1-\epsilon,
        \qquad \text{for all $t\ge \tau>0$};
      \end{equation}
    \item there exists an invariant measure for the laws $\{\mathcal{L}u(t,\cdot;\mu)\}_{t > 0}$ in
      $L^2_\rho(\R^d)$.
  \end{enumerate}
\end{theorem}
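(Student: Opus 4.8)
The plan is to derive part (b) from part (a) by a routine application of the Krylov--Bogoliubov theorem, and to concentrate all the work on the tightness statement (a). For (b): the Markovian transition semigroup $\{P_t\}_{t\ge 0}$ associated with~\eqref{E:SHE} on $L^2_\rho(\R^d)$ is Feller (\cite[Chapter 9]{da-prato.zabczyk:14:stochastic}), and by (a) the family $\{\mathscr L u(t,\cdot;\mu)\}_{t\ge\tau}$ is tight for every $\tau>0$; hence the Ces\`aro averages in~\eqref{E:InvMeasForm} form a tight family of probability measures on $L^2_\rho(\R^d)$, and any weak subsequential limit is an invariant measure for $\{P_t\}$ by the Krylov--Bogoliubov theorem (\cite[Theorem~11.7]{da-prato.zabczyk:14:stochastic}). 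So from here on I would work only on~\eqref{E:cpt}.

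For (a), fix $\tau>0$ and $\epsilon\in(0,1)$, and choose $\alpha\in\bigl(2^7\Upsilon(0)L_b^2,1\bigr)$ as in~(iv), which is possible by~\eqref{E:Interval} and for which $\Upsilon_\alpha(0)<\infty$. The first ingredient is a time--uniform second moment bound in the \emph{larger} weighted space: applying Theorem~\ref{T:Lrho} with $\tilde\rho$ in place of $\rho$ (legitimate since $\tilde\rho\in L^1(\R^d)$, since~\eqref{E:U0finite} holds by~(iii), and since~(ii) forces $\mathcal G_{\tilde\rho}(t;|\mu|)<\infty$ for all $t>0$) gives $\E\bigl(\Norm{u(t,\cdot;\mu)}_{\tilde\rho}^2\bigr)\le C\,\mathcal G_{\tilde\rho}(t;\mu^*)$, and then~(ii) yields the bound anticipated in~\eqref{E:BddMnt}, here in $L^2_{\tilde\rho}(\R^d)$:
\[
  m\;\coloneqq\;\sup_{s\ge\tau/2}\E\bigl(\Norm{u(s,\cdot;\mu)}_{\tilde\rho}^2\bigr)\;<\;\infty .
\]
The second ingredient is the factorization of Lemma~\ref{L:Factorize}: over any interval $[0,h]$ and from any initial datum $\zeta\in L^2_{\tilde\rho}(\R^d)$ one has
\[
  u(h,x;\zeta)\;=\;J_0(h,x;\zeta)\;+\;c_\alpha\!\int_0^h (h-s)^{\alpha/2-1}\bigl(G(h-s,\cdot)*Y_{\alpha/2}(s,\cdot;\zeta)\bigr)(x)\,\ud s ,
\]
where $Y_{\alpha/2}$ is the auxiliary process built with the singular kernel $(s-r)^{-\alpha/2}$. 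By Lemma~\ref{L:HUA}, condition~\eqref{E:Dalang-a0} from~(iv) implies~\eqref{E:Dalang-H}, so $Y_{\alpha/2}$ is a well-defined $L^2(\Omega)$-process; and by Lemma~\ref{L:yineq}, together with the moment bound from Theorem~\ref{T:Lrho} applied to $u(\cdot;\zeta)$ (finite because $\zeta\in L^2_{\tilde\rho}(\R^d)$, which also forces~\eqref{A:icon}) and with the special choice of $\alpha$, one gets $\sup_{s\le h}\E\bigl(\Norm{Y_{\alpha/2}(s,\cdot;\zeta)}_{\tilde\rho}^2\bigr)\le C(h)\bigl(1+\Norm{\zeta}_{\tilde\rho}^2\bigr)$.

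To extract the compact set I would restart the equation. By the Markov property, for $t\ge\tau$ we have $\mathscr L u(t,\cdot;\mu)=\int \mathscr L u(\tau/2,\cdot;\zeta)\,\mathscr L u(t-\tau/2,\cdot;\mu)(\ud\zeta)$, and for such $t$ the mixing law is carried by $L^2_{\tilde\rho}(\R^d)$ with second moment at most $m$; so by Chebyshev's inequality we may discard $\{\Norm{\zeta}_{\tilde\rho}>\lambda\}$ at cost $m/\lambda^2$, and it remains to produce, for each $\lambda$, a set $\mathscr K_\lambda$ that is \emph{compact} in $L^2_\rho(\R^d)$ and captures $u(\tau/2,\cdot;\zeta)$ with probability $\ge 1-\epsilon/2$ uniformly over $\Norm{\zeta}_{\tilde\rho}\le\lambda$. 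Split $u(\tau/2,\cdot;\zeta)=J_0(\tau/2,\cdot;\zeta)+v(\tau/2,\cdot;\zeta)$, $v$ the stochastic convolution term. For the deterministic part, since $|J_0(\tau/2,x;\zeta)|^2\le\Norm{\zeta}_{\tilde\rho}^2\int_{\R^d}G(\tau/2,x-y)^2\tilde\rho(y)^{-1}\,\ud y$ and, using the admissibility of $\rho$ and $\tilde\rho$ and assumption~(i), $\int_{\R^d}\rho(x)\int_{\R^d}G(\tau/2,x-y)^2\tilde\rho(y)^{-1}\,\ud y\,\ud x\le \|G(\tau/2,\cdot)\|_\infty\,C_\rho(T)\int_{\R^d}\rho/\tilde\rho<\infty$, the family $\{J_0(\tau/2,\cdot;\zeta):\Norm{\zeta}_{\tilde\rho}\le\lambda\}$ is bounded pointwise together with its derivatives on compacts and has uniformly small $L^2_\rho$-tails, hence is relatively compact in $L^2_\rho(\R^d)$ by Arzel\`a--Ascoli. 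For the stochastic part, Chebyshev applied to the $Y_{\alpha/2}$-bound of the previous paragraph reduces matters to showing that the factorization operator $g\mapsto\bigl(x\mapsto\int_0^{\tau/2}(\tau/2-s)^{\alpha/2-1}(G(\tau/2-s,\cdot)*g(s,\cdot))(x)\,\ud s\bigr)$ sends the ball $\{g:\sup_{s\le\tau/2}\Norm{g(s,\cdot)}_{\tilde\rho}\le K\}$ into a relatively compact subset of $L^2_\rho(\R^d)$. Granting this, $\mathscr K_\lambda$ is the (Minkowski) sum of the closure of the $J_0$-set and the compact image set; taking $\lambda$ large enough that $m/\lambda^2<\epsilon/2$ produces a single compact $\mathscr K$ with $\mathscr L u(t,\cdot;\mu)(\mathscr K)\ge 1-\epsilon$ for all $t\ge\tau$, which is~\eqref{E:cpt}.

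The main obstacle is exactly that last compactness claim for the factorization operator in the weighted space. The heat semigroup on $\R^d$ has neither a spectral gap nor a compact resolvent in $L^2_\rho$, and the time integral does not smooth uniformly near $s=\tau/2$, so the compactness cannot come from the operator alone: one has to genuinely use the regularity gain of order $\alpha/2$ encoded in~\eqref{E:Dalang-a0}---the same strengthening of Dalang's condition that yields spatial H\"older continuity of the solution (cf.\ the discussion around~\eqref{E:Dalang-ab})---to obtain local, Rellich-type compactness, and to combine it with the weight comparison~\eqref{E:rhorho} for the tails, just as in the treatment of the $J_0$ term. A secondary but nontrivial point is Lemma~\ref{L:yineq} itself: the kernel $(s-r)^{-\alpha/2}$ is singular and, for rough initial data, $\E(u(r,z)^2)$ blows up as $r\downarrow 0$, so closing the bound on $\E(\Norm{Y_{\alpha/2}(s,\cdot)}_{\tilde\rho}^2)$ needs both condition~\eqref{E:Dalang-H} (equivalently, via Lemma~\ref{L:HUA}, condition~\eqref{E:Dalang-a0}) and the precise location of $\alpha$ inside $\bigl(2^7\Upsilon(0)L_b^2,1\bigr)$, which is where~\eqref{E:Dalang-00} and~\eqref{E:Lipb} enter.
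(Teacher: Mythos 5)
Your proposal follows essentially the same route as the paper: restart the equation at time $t-\tau/2$, apply the factorization lemma to split $u(t,\cdot;\mu)$ into the heat semigroup acting on $u(t-\tau/2,\cdot;\mu)$ plus $F_{\alpha/2}$ applied to the auxiliary process $Y_{\alpha/2}$, control both pieces by Chebyshev using the time-uniform bound $\sup_t\mathcal{G}_{\tilde\rho}(t;\mu^*)<\infty$ from (ii) and Lemma~\ref{L:yineq}, take the Minkowski sum of the two relatively compact sets, and finish part (b) with Krylov--Bogoliubov. Two remarks. First, the step you flag as ``the main obstacle'' --- compactness of the map $g\mapsto\int_0^{t_0}(t_0-s)^{\alpha/2-1}\bigl(G(t_0-s,\cdot)*g(s,\cdot)\bigr)\,\ud s$ from a ball in $L^q\bigl((0,t_0);L^2_{\tilde\rho}(\R^d)\bigr)$ into $L^2_\rho(\R^d)$ --- is exactly Proposition~\ref{P:F-Comp}, imported from the first step of the proof of Theorem~3.1 of Tessitore--Zabczyk, and the compactness of $\varphi\mapsto G(t_0,\cdot)*\varphi$ from $L^2_{\tilde\rho}$ to $L^2_\rho$ (which you re-derive by hand via Arzel\`a--Ascoli plus the tail estimate from \eqref{E:rhorho}) is Proposition~\ref{P:weight}; so neither is a genuine gap once these are invoked, but you do need $\alpha/2>1/q$ for some admissible $q$, which is precisely where the interval \eqref{E:Choice} comes from. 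Second, your diagnosis of where condition~(iv) enters is slightly off: the compactness of $F_{\alpha/2}$ is a purely deterministic operator fact (each $G(t_0-s,\cdot)*$ is compact and the near-diagonal contribution is small in norm since $\alpha/2>1/q$), and requires no regularity gain from the noise; condition \eqref{E:Dalang-a0} is needed only so that, via Lemma~\ref{L:HUA}, $\mathcal{H}_{\alpha/2}(s)$ is finite and bounded, which makes $Y_{\alpha/2}$ well defined and gives the uniform-in-$t$ bound on $\E\int_0^{t_0}\Norm{Y_{\alpha/2}(s,\cdot)}_{\tilde\rho}^q\ud s$ in Lemma~\ref{L:yineq}(3). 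Your conditioning on the value $\zeta$ of $u(t-\tau/2,\cdot;\mu)$ is a workable variant of the paper's direct use of the restarted solution, provided you note that $\mathcal{G}_{\tilde\rho}(s;\zeta^*)\le C_{\tilde\rho}(T)\Norm{\zeta^*}_{\tilde\rho}^2$ (as in the proof of Corollary~\ref{C:inclusion}) so that the required bounds are uniform over the ball $\Norm{\zeta}_{\tilde\rho}\le\lambda$.
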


This theorem will be proved in Section~\ref{S:mainproof}.

\subsection{Outline and notation}

The paper is organized as follows: we first prove Theorem~\ref{T:Lrho} in Section~\ref{S:Lrho}. Then
in Section~\ref{S:Factor}, we study the factorization lemma. Then we proceed to prove
Theorem~\ref{T:Main} in Section~\ref{S:mainproof}. Finally, in Section~\ref{S:Example} we make some
further discussion on the main results and present various examples. In particular, in
Section~\ref{SS:Drift}, we give a brief review of the problem of finding invariant measures for the SHE
with a drift term; in Section~\ref{SS:TessZabz}, we compare our conditions on the spectral density
with those obtained by Tessitore and Zabczyk~\cite{tessitore.zabczyk:98:invariant}; in
Section~\ref{SS:Init}, we show that our results could include a wider class of initial conditions;
in Section~\ref{SS:Bassel}, we carry out some explicit computations for the Bessel and related
kernels as the correlation functions; finally, in Section~\ref{SS:Weight}, we give a few examples of
the admissible weight functions.

\bigskip

We conclude this Introduction by introducing some notation and formulas that we use throughout the
paper. We will use $\Norm{X}_p$ to denote the $L^p (\Omega)$ norm, namely,
$\Norm{X}_p = \left(\mathbb{E}(|X|^p)\right)^{1/p}$. We will also use the following factorization
property of the heat kernel,
\begin{equation}\label{E:gsim}
  G(t,x)G(s,y) = G\left(\frac{ts}{t+s}, \frac{sx + ty}{t+s}\right)G\left(t+s, x-y\right),
\end{equation}
which can be easily verified and has been used extensively and critically
in~\cite{chen.dalang:15:moments*1,chen.kim:19:nonlinear,chen.huang:19:comparison}. Next, we will
need the following spherical coordinate integration formula:
\begin{align*}
  \int_{\R^d} f(|x|) \ud x = \sigma(\mathbb{S}^{d-1}) \int_0^\infty f(r) r^{d-1} \ud r,
\end{align*}
where $\sigma(\mathbb{S}^{d-1}) = 2\pi^{d/2} / \Gamma(d/2)$ and $\Gamma(x)$ denotes the Gamma
function. We use ``$\sim$'' to denote the standard asymptotic equivalent relation. Lastly, the
convention of Fourier transform is given by (see Remark~\ref{R:Fourier})
\begin{align}\label{E:Fourier}
  \widehat{\phi}(\xi) = \mathcal{F}\phi(\xi) \coloneqq \int_{\R^d} e^{-i x\cdot\xi} \phi(x)\ud x \quad \text{and} \quad
  \mathcal{F}^{-1}\psi(x) \coloneqq (2\pi)^{-d} \int_{\R^d} e^{i x\cdot\xi} \psi(\xi)\ud \xi.
\end{align}

\section{Moment estimates -- Proof of Theorem~\ref{T:Lrho}}\label{S:Lrho}
We first state some known results and prove a moment bound in Corollary~\ref{C:Mom}.

\begin{theorem}[Theorem 1.2 of~\cite{chen.huang:19:comparison}]\label{T:Solve}
  Suppose that
  \begin{enumerate}[(i)]
   \item the initial deterministic measure $\mu$ satisfies~\eqref{A:icon};
   \item the spectral measure $\widehat{f}$ satisfies \textit{Dalang's condition}~\eqref{E:Dalang},
  \end{enumerate}
  Then~\eqref{E:SHE} has a unique random field solution starting from $\mu$. Moreover, the solution
  is $L^2 (\Omega)$ continuous and is adapted to the filtration $\{\mathcal{F}_t\}_{t\ge 0}$.
\end{theorem}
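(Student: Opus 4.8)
The plan is to construct the solution by a Picard iteration and to read the three assertions --- existence and uniqueness of a random field solution, $L^2(\Omega)$-continuity, and adaptedness --- off the estimates that make the iteration converge. Set $u_0(t,x)\coloneqq J_0(t,x;\mu)$ and, recursively,
\[
  u_{n+1}(t,x)\coloneqq J_0(t,x;\mu)+\int_0^t\int_{\R^d} b(y,u_n(s,y))\,G(t-s,x-y)\,W(\ud s,\ud y).
\]
The integrability condition~\eqref{A:icon} forces $J_0(t,x;\mu)$ to be finite for every $t>0$, $x\in\R^d$ (bound $G(t,x-y)$ by a constant times $\e^{-|y|^2/(4t)}$) and makes $(t,x)\mapsto J_0(t,x;\mu)$ continuous, hence locally bounded, on $(0,\infty)\times\R^d$; this settles the base layer $n=0$. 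The induction then shows that each $u_n$ is jointly measurable, adapted, $L^2(\Omega)$-continuous on $(0,\infty)\times\R^d$, with second moments bounded uniformly in $n$ on compact $(t,x)$-sets, so that $(u_n)$ is Cauchy in $L^2(\Omega)$ and its limit $u$ solves the mild equation~\eqref{E:gmsol}.

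The analytic core is a second-moment estimate for the stochastic convolution. By the Walsh--Dalang isometry~\cite{walsh:86:introduction,dalang:99:extending} and Plancherel's identity, for a predictable integrand $Z$,
\[
  \E\left[\left(\int_0^t\int_{\R^d} Z(s,y)\,G(t-s,x-y)\,W(\ud s,\ud y)\right)^2\right]
  =(2\pi)^{-d}\int_0^t\ud s\int_{\R^d}\widehat f(\ud\xi)\,\bigl|\mathcal{F}\bigl(Z(s,\cdot)G(t-s,x-\cdot)\bigr)(\xi)\bigr|^2 .
\]
Inserting the linear-growth bound $\Norm{b(y,v)}_2\le L_0+L_b\Norm{v}_2$ from~\eqref{E:lipcon}, the Cauchy--Schwarz inequality, and the nonnegativity of the measure $\widehat f$, one turns this into a recursion in which $\Norm{u_{n+1}(t,x)}_2^2$ is dominated by $2\,J_0(t,x;\mu)^2$ plus a constant times a $\Theta$-weighted time-integral of (a spatial average of) $1+\Norm{u_n(s,y)}_2^2$, where
\[
  \Theta(r)\coloneqq(2\pi)^{-d}\int_{\R^d}\widehat f(\ud\xi)\,\e^{-r|\xi|^2}
  \qquad\text{and}\qquad
  \int_0^t\Theta(r)\,\ud r=(2\pi)^{-d}\int_{\R^d}\frac{1-\e^{-t|\xi|^2}}{|\xi|^2}\,\widehat f(\ud\xi).
\]
The last quantity is finite for every $t>0$ \emph{precisely} when Dalang's condition~\eqref{E:Dalang} holds, since $\tfrac{1-\e^{-t|\xi|^2}}{|\xi|^2}\le\min(t,|\xi|^{-2})$ is comparable to $(1+|\xi|^2)^{-1}$ uniformly on compact $t$-intervals. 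Iterating the recursion and summing the resulting series --- a Gronwall/renewal argument with kernel $\Theta$ producing a convergent majorant of the type $\sum_n (C't)^n/n!$ --- yields the uniform-in-$n$ moment bound and the Cauchy property, locally uniformly in $(t,x)$. Uniqueness comes from applying the same estimate to the difference $w$ of two solutions: the $J_0$-terms cancel, $\Norm{w(t,x)}_2^2$ is bounded by a $\Theta$-time-convolution of itself, and Gronwall forces $w\equiv0$.

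The step I expect to be the main obstacle is the roughness of the initial datum. Because $\mu$ need only be a measure satisfying~\eqref{A:icon}, $J_0(t,x;\mu)$ generically blows up as $t\downarrow 0$ (e.g.\ $J_0(t,0;\delta_0)=(2\pi t)^{-d/2}$) and need not be bounded in $x$ for fixed $t$, so the naive supremum-in-$(t,x)$ Picard norm is infinite; one must instead work in a norm like $\sup_{(t,x)\in(0,T]\times\R^d}\e^{-\beta t}\,\Norm{u(t,x)}_2/J_0(t,x;1+|\mu|)$, weighted by the growth of the initial layer. The device that closes such an iteration is the heat-kernel factorization~\eqref{E:gsim}: applied to each product $J_0(s,y)\,G(t-s,x-y)=\int_{\R^d}G(s,y-w)G(t-s,x-y)\,\mu(\ud w)$ it pulls the $J_0$-weight out front as $J_0(t,x;\mu)$ (semigroup property) and leaves a Gaussian density in $y$ to be integrated against $\Gamma$ and against $\Theta$, so that the stochastic-convolution term in the recursion is bounded by $J_0(t,x;|\mu|)^2$ times $\int_0^t\Theta\bigl(\tfrac{s(t-s)}{t}\bigr)\ud s<\infty$ (again finite under~\eqref{E:Dalang}); iterating gives $\Norm{u_n(t,x)}_2^2\le C_n\,J_0(t,x;|\mu|)^2$ with $\sum_n C_n<\infty$. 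Tracking the $t\downarrow0$ singularity and the $|x|\to\infty$ growth simultaneously through this bookkeeping is the delicate part; cf.~\cite{chen.dalang:15:moments*1}.

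Finally, the two remaining regularity claims are inherited from the iteration. $L^2(\Omega)$-continuity of $u$ on $(0,\infty)\times\R^d$ holds because $u$ is the locally uniform $L^2(\Omega)$-limit of the $u_n$, each of which is $L^2(\Omega)$-continuous: $u_0=J_0$ is continuous by~\eqref{A:icon}, and continuity of the stochastic-convolution term follows from the isometry above together with continuity of $(t,x)\mapsto G(t-s,x-\cdot)$ in the relevant norm, the limit passage being justified by $\int_0^t\Theta(r)\,\ud r<\infty$. Adaptedness is immediate: the Walsh integral over $[0,t]\times\R^d$ of an $\{\mathcal{F}_s\}$-adapted integrand is $\mathcal{F}_t$-measurable, so each $u_n(t,\cdot)$ is $\mathcal{F}_t$-measurable, and $L^2(\Omega)$-limits of $\mathcal{F}_t$-measurable random variables remain $\mathcal{F}_t$-measurable.
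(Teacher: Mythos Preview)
The paper does not prove this theorem at all: it is stated as Theorem~\ref{T:Solve} with the attribution ``Theorem 1.2 of~\cite{chen.huang:19:comparison}'' and is used as a black box; there is no accompanying proof to compare against. So strictly speaking there is nothing in the paper for your proposal to match or diverge from.

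That said, your sketch is essentially the argument carried out in the cited references~\cite{chen.dalang:15:moments*1,chen.kim:19:nonlinear,chen.huang:19:comparison}: Picard iteration in a norm weighted by $J_0(t,x;1+|\mu|)$, with the heat-kernel factorization~\eqref{E:gsim} as the device that propagates the $J_0$-weight through the stochastic convolution and reduces the spatial integral to one against $\widehat f(\ud\xi)\,\e^{-c\,r|\xi|^2}$, whose time integral is controlled by Dalang's condition~\eqref{E:Dalang}. You have correctly identified the main difficulty (the $t\downarrow0$ blow-up of $J_0$ for rough $\mu$) and the correct remedy. One small slip: in your Walsh--Dalang isometry the right-hand side should carry an expectation when the integrand $Z$ is random, i.e.\ $\E\bigl|\mathcal{F}(Z(s,\cdot)G(t-s,x-\cdot))(\xi)\bigr|^2$; without it the subsequent Cauchy--Schwarz/linear-growth step is not literally what you wrote, though the intended bound is clear.
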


\begin{theorem}[Theorem 1.7 of~\cite{chen.huang:19:comparison}]\label{T:ExUn}
  Under the assumptions of Theorem~\ref{T:Solve}, for any $t>0$, $x \in \R^d$ and $p \ge 2$, the
  solution to~\eqref{E:SHE}, $u(t,x)$, given by~\eqref{E:gmsol} is in $L^p (\Omega)$ and
  \begin{equation}\label{E:mb}
    \Norm{u(t,x)}_p \le \big[\bar{\varsigma}+\sqrt{2}(G(t,\cdot) * |\mu|)(x)\big]H(t;\gamma_p)^{1/2},
  \end{equation}
  where $\bar{\varsigma} = L_0/L_b$, $\gamma_p = 32pL_b^2$ (see~\eqref{E:lipcon} for $L_0$ and $L_b$)
  and the function $H(t;\gamma_p)$ is nondecreasing in $t$ (see~\cite{chen.huang:19:comparison} for
  the expression of the function $H$).
\end{theorem}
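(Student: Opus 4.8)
The plan is to prove \eqref{E:mb} by a Picard iteration whose moments are controlled through the Burkholder--Davis--Gundy (BDG) inequality for Walsh integrals, which reduces the problem to a deterministic renewal-type inequality, and then to propagate a factored ansatz using the heat-kernel identity \eqref{E:gsim}. Write $\bar{J}_0(t,x)\coloneqq(G(t,\cdot)*|\mu|)(x)$, so $|J_0(t,x;\mu)|\le\bar{J}_0(t,x)$, set $u_0(t,x)\coloneqq J_0(t,x;\mu)$ and
\begin{equation*}
  u_{n+1}(t,x)\coloneqq J_0(t,x;\mu)+\int_0^t\int_{\R^d}b(y,u_n(s,y))\,G(t-s,x-y)\,W(\ud s,\ud y).
\end{equation*}
Applying BDG to the stochastic integral, the triangle inequality in $L^{p/2}(\Omega)$ to the $\ud s\,\Gamma(\ud z)\,\ud y$ integral defining its quadratic variation, the Cauchy--Schwarz inequality in $\Omega$, the bound $|b(x,u)|\le L_b(\bar{\varsigma}+|u|)$ from \eqref{E:lipcon}, and $\Norm{a+b}_p^2\le2\Norm{a}_p^2+2\Norm{b}_p^2$, yields
\begin{equation*}
  \Norm{u_{n+1}(t,x)}_p^2\le2\bar{J}_0(t,x)^2+2z_pL_b^2\int_0^t\ud s\int_{\R^d}\Gamma(\ud z)\int_{\R^d}\ud y\;G(t-s,x-y)G(t-s,x-y-z)\,\Phi_n(s,y)\Phi_n(s,y+z),
\end{equation*}
where $z_p$ is the BDG constant (a universal multiple of $p$) and $\Phi_n(s,y)\coloneqq\bar{\varsigma}+\Norm{u_n(s,y)}_p$; only the deterministic function $\Norm{u_n(s,\cdot)}_p$ survives, so the remainder is analytic.

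The core step is to carry the factored ansatz $\Norm{u_n(t,x)}_p\le[\bar{\varsigma}+\sqrt{2}\,\bar{J}_0(t,x)]H_n(t)^{1/2}$ along the iteration, where $H_0\le H_1\le\cdots$ are nondecreasing with $H_0\equiv1$; the base case holds since $\bar{J}_0\le\bar{\varsigma}+\sqrt{2}\bar{J}_0$, and $2\bar{J}_0(t,x)^2\le[\bar{\varsigma}+\sqrt{2}\bar{J}_0(t,x)]^2$ absorbs the first term above. Granting the ansatz at stage $n$, one has $\Phi_n(s,y)\le[\bar{\varsigma}+\sqrt{2}\bar{J}_0(s,y)](1+H_n(s)^{1/2})$, so the induction closes once we prove the deterministic estimate
\begin{equation*}
  \int_{\R^d}\Gamma(\ud z)\int_{\R^d}\ud y\;G(r,x-y)G(r,x-y-z)\big[\bar{\varsigma}+\sqrt{2}\bar{J}_0(s,y)\big]\big[\bar{\varsigma}+\sqrt{2}\bar{J}_0(s,y+z)\big]\le\big[\bar{\varsigma}+\sqrt{2}\bar{J}_0(t,x)\big]^2\,k(r,s),\quad r=t-s,
\end{equation*}
for a kernel $k$ with $\int_0^t k(t-s,s)\,\ud s<\infty$. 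Here \eqref{E:gsim} carries the load: it first peels off $G(r,x-y)G(r,x-y-z)=G(r/2,x-y-z/2)G(2r,z)$; then, writing $\bar{J}_0=(G(s,\cdot)*|\mu|)$ and using \eqref{E:gsim} and the semigroup property repeatedly, each of the $\bar{\varsigma}^2$, $\bar{\varsigma}\bar{J}_0$ and $\bar{J}_0\bar{J}_0$ pieces is reduced, after the $\ud y$-integration, to $G(2r,z)$ or $G(2t,\cdot)$ times a Gaussian ``completion-of-square'' factor in $z$; the $\Gamma(\ud z)$-integral of that factor is bounded by $\Norm{\Gamma*G(\sigma,\cdot)}_\infty=(2\pi)^{-d}\int\widehat{f}(\ud\xi)\e^{-\sigma|\xi|^2/2}<\infty$ for a suitable $\sigma=\sigma(r,s)>0$ (finite by Dalang's condition \eqref{E:Dalang}), while the surviving $\bar{\varsigma}^2$, $\bar{\varsigma}\bar{J}_0(t,x)$ and $\bar{J}_0(t,x)^2$ reassemble into $[\bar{\varsigma}+\sqrt{2}\bar{J}_0(t,x)]^2$ precisely because of the constant $\sqrt2$. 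Taking $k(r,s)$ to be the maximum of the three spectral integrals so produced, integration in $s$ gives $\int_0^t k(t-s,s)\,\ud s<\infty$ (each integrand is $O(|\xi|^{-2})$ at infinity, hence integrable against $\widehat{f}$ by \eqref{E:Dalang}, and locally finite near the origin). Plugging back yields the Volterra recursion $H_{n+1}(t)=1+2z_pL_b^2\int_0^t k(t-s,s)\,(1+H_n(s)^{1/2})^2\,\ud s$ (schematically), and renaming $\gamma_p=32pL_b^2$ absorbs the universal constants.

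Finally, since $\int_0^t k(t-s,s)\,\ud s<\infty$ and $(1+\sqrt{H})^2\le2+2H$, the recursion for $H_n$ is of Volterra type, so by Gronwall $H_n\uparrow H(t;\gamma_p)<\infty$ for every $t>0$---no smallness of $L_b$ is needed here, only \eqref{E:Dalang}---and $H$ is nondecreasing since each $H_n$ is. A parallel but \emph{linear} recursion for $\Norm{u_{n+1}(t,x)-u_n(t,x)}_p^2$ with the same kernel $k$ shows $\{u_n(t,x)\}_n$ is Cauchy in $L^p(\Omega)$; the limit lies in $L^p(\Omega)$, satisfies \eqref{E:gmsol}, and by uniqueness of the $L^2$-solution in Theorem~\ref{T:Solve} equals $u(t,x)$; letting $n\to\infty$ in the ansatz and using Fatou's lemma gives \eqref{E:mb}. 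The main obstacle is the core step: not finiteness, which is routine once the kernels are named, but the bookkeeping required to keep $\bar{J}_0$ strictly linear inside every spatial convolution (squaring it is fatal when $\mu$ is rough---e.g.\ a Dirac mass---since $\bar{J}_0(t,\cdot)$ is then unbounded) and to track how the shifted heat kernels produced by repeated use of \eqref{E:gsim} recombine, via the exact constant $\sqrt2$, into a pointwise multiple of $\bar{J}_0(t,x)$. An alternative bypassing part of this is to first prove \eqref{E:mb} for the affine equation with coefficient $\tilde{b}(u)=L_b(\bar{\varsigma}+u)$ started from $1+|\mu|$, where the chaos structure is explicit, and then invoke the moment comparison principle of \cite{chen.huang:19:comparison}.
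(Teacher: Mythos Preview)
The paper does not prove this statement; it is quoted without proof as Theorem~1.7 of \cite{chen.huang:19:comparison} and used as a black box (to derive Corollary~\ref{C:Mom} and the estimates in Section~\ref{S:Factor}). There is therefore no ``paper's own proof'' to compare against.

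That said, your outline is a faithful sketch of the argument in the cited references. The Picard/BDG reduction, the factored ansatz $[\bar{\varsigma}+\sqrt{2}\,\bar{J}_0]H_n^{1/2}$, and the repeated use of \eqref{E:gsim} to keep $\bar{J}_0$ from being squared are exactly the machinery of \cite{chen.kim:19:nonlinear} (see Lemma~2.2 and the proof of Theorem~1.3 there), and the function $H(t;\gamma)$ is defined there as the series solution of precisely the Volterra recursion you write down. The alternative you mention at the end---prove \eqref{E:mb} first for the affine diffusion $\tilde{b}(u)=L_b(\bar{\varsigma}+u)$ and then transfer it via moment comparison---is in fact how \cite{chen.huang:19:comparison} itself organizes the argument: Theorem~1.7 there is deduced from the comparison principle (Theorem~1.3 \textit{ibid.}) combined with the explicit moment formula for the affine model. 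Both routes are correct; the direct one you sketch first is closer to \cite{chen.kim:19:nonlinear}, while the comparison route is the one actually invoked by the citation.
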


\begin{corollary}\label{C:Mom}
  Under the same setting as Theorem~\ref{T:ExUn}, if the two conditions in~\eqref{E:U0finite} hold
  (see also~\eqref{E:Interval}), then
  \begin{equation}\label{E:mb0}
    \Norm{u(t,x)}_p \le C_p\bigg(1+ (G(t, \cdot)*|\mu|)(x) \bigg),
    \quad \text{for all $p$ such that $1/p \in \left(64 L_b^2\Upsilon(0),1\right)$},
  \end{equation}
  where $C_p = \left(\sqrt{2}\vee \overline{\varsigma}\right)\sup_{t\ge 0}
  H(t;\gamma_p)^{1/2}<\infty$.
\end{corollary}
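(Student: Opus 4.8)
The plan is to read the estimate off Theorem~\ref{T:ExUn} in two steps, the second of which is where~\eqref{E:U0finite} does the work.

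\emph{Step 1 (recast the bracket).} Since $L_0\ge 0$ and $L_b>0$ give $\bar\varsigma=L_0/L_b\ge 0$, and since $\bar\varsigma+\sqrt 2\,c\le(\sqrt 2\vee\bar\varsigma)(1+c)$ for every $c\ge 0$, taking $c=(G(t,\cdot)*|\mu|)(x)\ge 0$ and plugging into~\eqref{E:mb} gives exactly~\eqref{E:mb0} with $C_p=(\sqrt 2\vee\bar\varsigma)\,\sup_{t\ge 0}H(t;\gamma_p)^{1/2}$. So the corollary reduces to showing $\sup_{t\ge 0}H(t;\gamma_p)<\infty$ whenever $1/p>64L_b^2\Upsilon(0)$.

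\emph{Step 2 (bound $H$ uniformly in time).} Because $t\mapsto H(t;\gamma_p)$ is nondecreasing (Theorem~\ref{T:ExUn}), $\sup_{t\ge 0}H(t;\gamma_p)=\lim_{t\to\infty}H(t;\gamma_p)$, and I would evaluate this from the explicit renewal-series form of $H$ given in~\cite{chen.huang:19:comparison}. In the normalization there it has the shape $H(t;\gamma)=\sum_{n\ge 0}2^{\,n}\gamma^{\,n}\mathcal L_n(t)$, the extra factor $2$ per level being the one produced by the split $|b(y,u)|^2\le 2L_b^2u^2+2L_0^2$, where $\mathcal L_0\equiv 1$ and, with $K(s):=(2\pi)^{-d}\int_{\R^d}e^{-s|\xi|^2}\widehat f(\ud\xi)$ and $s_0:=0$,
\[
  \mathcal L_n(t)=\int_{0<s_1<\cdots<s_n<t}\prod_{i=1}^{n}K(s_i-s_{i-1})\,\ud s_1\cdots\ud s_n.
\]
By Tonelli and condition~\eqref{E:Dalang-00}, $\int_0^\infty K(s)\,\ud s=\Upsilon(0)<\infty$; and the change of variables $u_i=s_i-s_{i-1}$ (unit Jacobian, carrying $\{0<s_1<\cdots<s_n\}$ onto $\{u_1,\dots,u_n>0\}$) gives $\sup_{t>0}\mathcal L_n(t)=\Upsilon(0)^n$. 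Hence
\[
  \sup_{t\ge 0}H(t;\gamma_p)\ \le\ \sum_{n\ge 0}\bigl(2\gamma_p\Upsilon(0)\bigr)^{n}\ =\ \frac{1}{1-2\gamma_p\Upsilon(0)}\ <\ \infty
\]
as soon as $2\gamma_p\Upsilon(0)<1$, that is $64pL_b^2\Upsilon(0)<1$, that is $1/p>64L_b^2\Upsilon(0)$; and this restriction is compatible with $1/p<1$ precisely because~\eqref{E:U0finite} forces the interval~\eqref{E:Interval} to be nonempty.

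The only slightly delicate point is the constant bookkeeping in Step 2: one must confirm that the level-$n$ coefficient of the series defining $H$ in~\cite{chen.huang:19:comparison} really is $2^{n}$---so that the threshold is $64L_b^2\Upsilon(0)$ and not $32L_b^2\Upsilon(0)$---which comes down to tracking the Burkholder--Davis--Gundy constant and the split $(a+b)^2\le 2a^2+2b^2$ through the Picard scheme behind Theorem~\ref{T:ExUn}. An alternative route that avoids handling the series explicitly is to apply the final-value theorem to the Laplace transform of $H(\cdot;\gamma_p)$, which equals $[\beta\,(1-2\gamma_p\Upsilon(\beta))]^{-1}$, using $\Upsilon(\beta)\uparrow\Upsilon(0)$ as $\beta\downarrow 0$ (monotone convergence) and the monotonicity of $H$ to justify passing to the limit; this again yields $\lim_{t\to\infty}H(t;\gamma_p)=(1-2\gamma_p\Upsilon(0))^{-1}$.
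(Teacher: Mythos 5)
Your argument is correct and follows the same route as the paper: reduce to showing $\sup_{t\ge0}H(t;\gamma_p)<\infty$ and observe that the condition $2\gamma_p\Upsilon(0)<1$ with $\gamma_p=32pL_b^2$ is exactly $1/p>64L_b^2\Upsilon(0)$. The only difference is that the paper simply cites Lemma~2.5 of~\cite{chen.kim:19:nonlinear} for the boundedness of $H$ under $2\gamma_p\Upsilon(0)<1$, whereas you re-derive that lemma from the renewal-series form of $H$ (and you correctly flag that the level-$n$ constant there is the one piece that must be checked against the normalization in~\cite{chen.huang:19:comparison}).
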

\begin{proof}
  Lemma 2.5 of~\cite{chen.kim:19:nonlinear} gives one sufficient condition, namely
  $2\gamma_p\Upsilon(0)<1$, under which the function $H((t;\gamma_p)$ is bounded in $t$. Therefore,
  by taking into account the expression of $\gamma_p$ in Theorem~\ref{T:ExUn}, we see that as a
  direct consequence of~\eqref{E:mb}, whenever
  \begin{equation}\label{E:pLip}
     32 p L_b^2 < \frac{1}{2\Upsilon(0)},
  \end{equation}
  we have the $p$-th moment bounded as given in~\eqref{E:mb0}.
\end{proof}

Now we are ready to prove Theorem~\ref{T:Lrho}.

\begin{proof}[Proof of Theorem~\ref{T:Lrho}]
  Under condition (iii), we can apply Fubini's Theorem and the moment bound~\eqref{E:mb0} below to
  see that for some constant $C>0$ independent of $t$, which may vary from line to line, that
  \begin{align*}
    \E\left(\Norm{u(t,\cdot;\mu)}_{\rho}^2\right)
    & \le C\: \E \left[\int_{\R^d}\bigg(1+(G(t,\cdot) * |\mu|)(x)\bigg)^2\rho(x)\ud x\right] \\
    & = C \int_{\R^d}\E \left[\bigg(\Big(G(t,\cdot) *(1+|\mu|)\Big)(x) \bigg)^2\right]\rho(x)\ud x \\
    & = C \:\mathcal{G}_{\rho}(t;\mu^*) < \infty,
  \end{align*}
  where we recall that $\mu^* = 1+|\mu|$. This proves Theorem~\ref{T:Lrho}.
\end{proof}

\begin{remark}[Restarted SHE]\label{R:Restat}
  Recall that the Markov property of the solution to~\eqref{E:SHE} implies that for any $t\ge
  t_0>0$,
  \begin{align}\label{E:eqLaw}
    u(t + t_0,x;\mu) \stackrel{\mathcal{L}}{=}
    u\left(t,x;u\left(t_0,\cdot;\mu\right)\right) \eqqcolon v(t,x),
  \end{align}
  where $\mathcal{L}$ refers to the equality in law. Then $v$ satisfies the following restarted
  SPDE:
  \begin{equation}\label{E:restart}
    \begin{cases}
      \dfrac{\partial v}{\partial t}(t,x)-\dfrac{1}{2}\Delta v(t,x) = b(x,v(t,x))\dot W_{t_0}(t,x) & \text{$x\in \R^d$ , $t>0$}, \\
      v(0,x) = u(t_0,x;\mu),                                                                       & x\in\R^d,
    \end{cases}
  \end{equation}
  where $\dot{W}_{t_0}(t,x) \coloneqq \dot{W}(t + t_0,x)$ denotes the time shifted noise, i.e.,
  \begin{align}\label{E:ShiftW}
    \int_0^t \int_{\R^d} W_{t_0}(\ud s, \ud y) = \int_{t_0}^{t+t_0} \int_{\R^d} W(\ud s, \ud y).
  \end{align}
  Under the conditions in~\eqref{E:U0finite}, Theorem~\ref{T:ExUn} and~\eqref{E:eqLaw} imply
  immediately that
  \begin{align*}
  \Norm{v(t,x)}_q = \Norm{u(t+t_0,x;\mu)}_q
                  \le C_q\bigg(1 + (G(t+t_0, \cdot)* |\mu|)(x) \bigg)
                  = C_q J_0(t+t_0,x; 1+|\mu|),
  \end{align*}
  for all $q\ge 2$ and $t>0$, where the constant $C_q$ does not depend on $t$. Moreover, under the
  assumptions of Theorem~\ref{T:Lrho}, we have $v(0,\cdot)\in L_\rho^2 (\R^d)$ a.s. and
  \begin{align*}
        \E\left(\Norm{v(t,x)}_\rho^2\right)
    = \E\left(\Norm{u(t+t_0,x;\mu)}_\rho^2\right)
    \le C \mathcal{G}_\rho(t+t_0;\mu^*)
    < \infty.
  \end{align*}
\end{remark}

\section{A factorization lemma}\label{S:Factor}

In this section, we establish a factorization lemma with corresponding moment estimates; see
Lemmas~\ref{L:yineq} and~\ref{L:Factorize} below. This factorization lemma appeared
in~\cite{da-prato.kwapien.ea:87:regularity}; check also Section~5.3.1
of~\cite{da-prato.zabczyk:14:stochastic}. For $\alpha\in (0,1)$, $t>0$ and $x\in\R^d$, define
formally
\begin{align}\label{E:Ops}
  \left(F_{\alpha} f\right)(t,x) & \coloneqq \int_0^{t} \int_{\R^d} (t-s)^{\alpha-1}G(t-s,x-y)f(s,y)\: \ud s \ud y \shortintertext{and}
  \left(Y_{\alpha} f\right)(t,x) & \coloneqq \int_0^{t} \int_{\R^d} (t-s)^{-\alpha}G(t-s,x-y)f(s,y) W(\ud s,\ud y) \:.
\end{align}

For $F_\alpha$, the first step of the proof of~\cite[Theorem 3.1]{tessitore.zabczyk:98:invariant}
showed the following proposition:
\begin{proposition}\label{P:F-Comp}
  Let $\rho$ and $\tilde{\rho}$ be given as in condition (i) of Theorem~\ref{T:Main}
  (see~\eqref{E:rhorho}). For any $q>2$, $t_0>0$ and $\alpha \in (q^{-1},2^{-1})$, the operator
  $F_\alpha$, as an operator from $L^q\big((0,t_0);\: L^2_{\tilde{\rho}}(\R^d)\: \big)$ to
  $L^2_\rho(\R^d)$, is compact.
\end{proposition}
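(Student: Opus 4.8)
The plan is to realise $F_\alpha$ as a superposition of the spatial convolution operators $T_\tau g\coloneqq G(\tau,\cdot)*g$: after the substitution $\tau=t-s$ at $t=t_0$ one has $(F_\alpha f)(t_0,\cdot)=\int_0^{t_0}\tau^{\alpha-1}\,T_\tau f(t_0-\tau,\cdot)\,\ud\tau$, and the claim will follow from the classical factorization--compactness scheme (see~\cite{da-prato.kwapien.ea:87:regularity} and Section~5.3.1 of~\cite{da-prato.zabczyk:14:stochastic}). First I would record two facts. Jensen's inequality $\big((G(\tau,\cdot)*g)(x)\big)^2\le(G(\tau,\cdot)*g^2)(x)$ together with the admissibility bound~\eqref{E:aw} for $\rho$ gives, for $0<\tau\le t_0$, $\int_{\R^d}\big((G(\tau,\cdot)*g)(x)\big)^2\rho(x)\,\ud x\le C_\rho(t_0)\int_{\R^d}g(y)^2\rho(y)\,\ud y$; combined with the continuous embedding $L^2_{\tilde{\rho}}(\R^d)\hookrightarrow L^2_\rho(\R^d)$ — a consequence of~\eqref{E:rhorho} which, together with the regularity inherent in admissible weights, forces $\rho/\tilde{\rho}\in L^\infty(\R^d)$ — this yields $\sup_{0<\tau\le t_0}\Norm{T_\tau}_{L^2_{\tilde{\rho}}\to L^2_\rho}<\infty$. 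Since $\alpha>q^{-1}$ is exactly the condition $\tau^{\alpha-1}\in L^{q'}(0,t_0)$, Minkowski's integral inequality and Hölder's inequality in the time variable already show that $F_\alpha\colon L^q\big((0,t_0);L^2_{\tilde{\rho}}(\R^d)\big)\to L^2_\rho(\R^d)$ is bounded.

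For compactness I would exploit the smoothing of the heat kernel. For each fixed $\tau>0$ the operator $T_\tau\colon L^2_{\tilde{\rho}}(\R^d)\to L^2_\rho(\R^d)$ is Hilbert--Schmidt: after the isometries $g\mapsto g\sqrt{\tilde{\rho}}$ and $g\mapsto g\sqrt{\rho}$ it is the integral operator on $L^2(\R^d)$ with kernel $\sqrt{\rho(x)}\,G(\tau,x-y)\,\tilde{\rho}(y)^{-1/2}$, and by Fubini, $\sup_zG(\tau,z)=(2\pi\tau)^{-d/2}$ and~\eqref{E:aw},
\[
  \int_{\R^d}\!\!\int_{\R^d}\frac{\rho(x)\,G(\tau,x-y)^2}{\tilde{\rho}(y)}\,\ud x\,\ud y
  \le (2\pi\tau)^{-d/2}\int_{\R^d}\frac{(G(\tau,\cdot)*\rho)(y)}{\tilde{\rho}(y)}\,\ud y
  \le (2\pi\tau)^{-d/2}C_\rho(t_0)\int_{\R^d}\frac{\rho(y)}{\tilde{\rho}(y)}\,\ud y<\infty
\]
by~\eqref{E:rhorho}; moreover dominated convergence shows that $\tau\mapsto T_\tau$ is continuous in the Hilbert--Schmidt norm on each interval $[\delta,t_0]$ with $\delta>0$. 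Now split $F_\alpha=A_\delta+B_\delta$, where $A_\delta f=\int_\delta^{t_0}\tau^{\alpha-1}T_\tau f(t_0-\tau,\cdot)\,\ud\tau$ and $B_\delta$ carries the part over $(0,\delta)$. Riemann sums over a partition $\delta=\tau_0<\dots<\tau_N=t_0$, namely $\sum_i T_{\tau_i}\circ L_i$ with $L_if\coloneqq\int_{\tau_i}^{\tau_{i+1}}\tau^{\alpha-1}f(t_0-\tau,\cdot)\,\ud\tau$, approximate $A_\delta$ in operator norm (using the Hilbert--Schmidt continuity of $\tau\mapsto T_\tau$ on $[\delta,t_0]$ and $\tau^{\alpha-1}\in L^{q'}$); since each $L_i\colon L^q\big((0,t_0);L^2_{\tilde{\rho}}\big)\to L^2_{\tilde{\rho}}$ is bounded and each $T_{\tau_i}$ is compact, $A_\delta$ is compact. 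On the other hand,
\[
  \Norm{B_\delta}_{L^q((0,t_0);L^2_{\tilde{\rho}})\to L^2_\rho}
  \le \Big(\sup_{0<\tau\le t_0}\Norm{T_\tau}_{L^2_{\tilde{\rho}}\to L^2_\rho}\Big)\Big(\int_0^{\delta}\tau^{(\alpha-1)q'}\,\ud\tau\Big)^{1/q'}\longrightarrow 0
\]
as $\delta\downarrow0$; hence $F_\alpha$, being the operator-norm limit of the compact operators $A_\delta$, is compact.

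The step I expect to be the main obstacle is precisely the uniform-in-$\tau$ bound $\sup_{0<\tau\le t_0}\Norm{T_\tau}_{L^2_{\tilde{\rho}}\to L^2_\rho}<\infty$ that makes $\Norm{B_\delta}\to0$: the Hilbert--Schmidt norm of $T_\tau$ blows up like $\tau^{-d/4}$ as $\tau\downarrow0$, and since $\alpha<2^{-1}$ the function $\tau^{\alpha-1-d/4}$ fails to be integrable for $d\ge2$, so the small-time piece $B_\delta$ cannot be controlled through Hilbert--Schmidt estimates. One is forced to use the admissibility of both weights in an essential way — through the Jensen bound above and the boundedness of $\rho/\tilde{\rho}$, a regularity property of admissible weights — to obtain a bound on $\Norm{T_\tau}_{L^2_{\tilde{\rho}}\to L^2_\rho}$ that stays finite as $\tau\downarrow0$. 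Once that point is settled, the remaining ingredients (the Hilbert--Schmidt property and continuity of $\tau\mapsto T_\tau$ on $[\delta,t_0]$, and the bookkeeping in the Riemann-sum approximation of $A_\delta$) are routine.
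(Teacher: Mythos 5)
The paper does not actually prove this proposition itself --- it quotes it as established in the first step of the proof of Theorem~3.1 of~\cite{tessitore.zabczyk:98:invariant} --- so your reconstruction of the standard factorization--compactness scheme is the right template. The treatment of $A_\delta$ is sound: the Hilbert--Schmidt bound $\Norm{T_\tau}_{HS}\lesssim \tau^{-d/4}$, the continuity of $\tau\mapsto T_\tau$ in Hilbert--Schmidt norm on $[\delta,t_0]$, the Riemann-sum approximation by operators of the form $\sum_i T_{\tau_i}\circ L_i$, and the role of $\alpha>q^{-1}$ in making $\tau^{\alpha-1}\in L^{q'}(0,t_0)$ are all correct.

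The gap sits exactly where you yourself locate the main obstacle: the uniform bound $\sup_{0<\tau\le t_0}\Norm{T_\tau}_{L^2_{\tilde{\rho}}\to L^2_\rho}<\infty$, which you reduce to the assertion that~\eqref{E:rhorho}, ``together with the regularity inherent in admissible weights,'' forces $\rho/\tilde{\rho}\in L^\infty(\R^d)$. You never prove this, and it is not a routine deduction: a continuous, strictly positive, integrable function need not be bounded, so something substantive about admissible weights must be invoked. Worse, the uniform operator bound you need is in fact \emph{equivalent} to $\rho/\tilde{\rho}\in L^\infty(\R^d)$: if $\sup_{0<\tau\le t_0}\Norm{T_\tau}\le K$, then for nonnegative $g\in C_c(\R^d)$ Fatou's lemma applied to $T_\tau g\to g$ as $\tau\downarrow 0$ gives $\int g^2\rho\le K^2\int g^2\tilde{\rho}$, whence $\rho\le K^2\tilde{\rho}$ pointwise by continuity. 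So there is no route to controlling $B_\delta$ that avoids establishing this pointwise comparison. Admissibility does give one half of what is needed, namely $\tilde{\rho}(x)\ge c\int_{B(x,1)}\tilde{\rho}$ (no deep narrow valleys, since $(G(1,\cdot)*\tilde{\rho})(x)\le C\tilde{\rho}(x)$ bounds a local average from above by the pointwise value); but to conclude you would also need the reverse, Harnack-type inequality $\rho(x)\le C\int_{B(x,1)}\rho$ (no tall narrow spikes), and~\eqref{E:aw} never bounds $\rho(x)$ from above by a local average, so this direction does not follow in any obvious way. To close the argument you must either prove such a Harnack inequality for admissible weights, or add the hypothesis $\rho\le C\tilde{\rho}$ explicitly (it holds for all the concrete weights in~\eqref{E:admRho}), or establish a small-time estimate of the form $\Norm{T_\tau}_{L^2_{\tilde{\rho}}\to L^2_\rho}\lesssim\tau^{-\epsilon}$ with $\epsilon<\alpha-q^{-1}$, which would still suffice for $\Norm{B_\delta}\to 0$. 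As written, the proof is incomplete at its critical step.
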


As for $Y_\alpha$, we have the following two lemmas, which hold for both the non-restarted SHE
($t_0 = 0$) and the restarted SHE ($t_0 > 0$).

\begin{lemma}\label{L:yineq}
  Suppose that $\mu$---the initial condition for $u$---satisfies~\eqref{A:icon} and that
  $\widehat{f}$ satisfies Dalang's condition~\eqref{E:Dalang}. Suppose that for some $\alpha\in
  (0,1/2)$, $\mathcal{H}_\alpha(t)$ defined in~\eqref{E:H} is finite for all $t>0$. Fix an arbitrary
  $t_0\ge 0$. Let $v(t,x)$ be the solution to the restarted SHE~\eqref{E:restart} and
  $\dot{W}_{t_0}$ be the time-shifted noise (see~\eqref{E:ShiftW}) when $t_0>0$ and let $v = u$ when
  $t_0 = 0$. Then
  \begin{equation}\label{E:Y}
     Y_v(s,y) \coloneqq \left[Y_\alpha b\left(\circ, v(\cdot,\circ)\right)\right](s,y)
               = \int_0^s \int_{\R^d} (s-r)^{-\alpha}G(s-r,y-z)b(z,v(r,z))W_{t_0}(\ud r,\ud z)
  \end{equation}
  has the following properties:

  \begin{enumerate}[(1)]
    \item for all $q\ge 2$, $s>0$, $y\in\R^d$,
      \begin{align}\label{E:NormY}
        \Norm{Y_v(s,y)}_q^2 \le H\left(s+t_0;32qL_b^2\right) \: J_0^2 (s + t_0,y;\mu^*)\: \mathcal{H}_\alpha(s)<\infty,
      \end{align}
      where we remind the reader that $\mu^* \coloneqq 1 + |\mu|$, and we refer to
      Theorem~\ref{T:ExUn} for the function $H\left(t;\gamma\right)$;
    \item under both conditions in~\eqref{E:U0finite}, if $\mathcal{H}_\alpha(t)$ is finite for some
      $\alpha\in \left(64 L_b^2 \Upsilon(0),\: 1/2\right)$, then for any $q$ with $1/q\in \left(64
      L_b^2 \Upsilon(0), \: \alpha\right)$, the function $H\left(t;32qL_b^2\right)$
      in~\eqref{E:NormY} is uniformly bounded in $t\ge 0$, i.e., $\sup_{t\ge
      0}H\left(t;32qL_b^2\right)<\infty$;
    \item under both conditions in~\eqref{E:U0finite}, if $\mathcal{H}_\alpha(t)$ is finite for some
      $\alpha\in \left(64 L_b^2 \Upsilon(0),\: 1/2\right)$, then for any $q$ with $1/q\in \left(64
      L_b^2 \Upsilon(0), \: \alpha\right)$ and for any nonnegative and $L^1 (\R^d)$-function $\rho$,
      there exists a constant $\Theta = \Theta\left(q,L_b,L_0,\alpha\right)$, which does not depend
      on $t$, such that for $t>0$,
      \begin{equation}\label{E:yineq}
        \E \left(\int_0^{t} \Norm{Y_v(s,\cdot)}_{\rho}^q \ud s\right)
        \le \Theta \int_0^t \left[\mathcal{G}_{\rho}(s + t_0;\mu^*)\: \mathcal{H}_\alpha(s)\right]^{q/2} \ud s,
      \end{equation}
      which is finite provided that
    \begin{align}\label{E:GH-int'}
      \int_{0}^t \left[\mathcal{G}_{\rho}(s + t_0;\mu)\:\mathcal{H}_\alpha(s)\right]^{q/2} \ud s <\infty.
    \end{align}
  \end{enumerate}
\end{lemma}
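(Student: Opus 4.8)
The plan is to establish the three assertions in turn, with~(1) carrying essentially all of the work and~(2)--(3) following quickly from it and from already-established facts. For~(1) I would start from the Burkholder--Davis--Gundy inequality for Walsh stochastic integrals against $W_{t_0}$ (whose spatial covariance is governed by $\Gamma$ as in~\eqref{E:Cor}); after also using Minkowski's and the Cauchy--Schwarz inequalities to bring the $L^q(\Omega)$-norm inside, this gives, for $q\ge2$ and $C_q$ a universal multiple of $q$,
\[
  \Norm{Y_v(s,y)}_q^2
  \le C_q\int_0^s\ud r\,(s-r)^{-2\alpha}\int_{\R^d}\Gamma(\ud w)\int_{\R^d}\ud z\;
  G(s-r,y-z)\,G(s-r,y-z+w)\,\Norm{b(z,v(r,z))}_q\,\Norm{b(z-w,v(r,z-w))}_q .
\]
The random coefficient is controlled by combining the linear growth bound $|b(z,u)|\le L_0+L_b|u|$ from~\eqref{E:lipcon} with the moment estimate of Theorem~\ref{T:ExUn} applied to the restarted equation~\eqref{E:restart}: since $v(r,\cdot)$ has the law of $u(r+t_0,\cdot;\mu)$ (see~\eqref{E:eqLaw} and Remark~\ref{R:Restat}), one has $\Norm{v(r,z)}_q\le\bigl[\bar{\varsigma}+\sqrt2\,(G(r+t_0,\cdot)*|\mu|)(z)\bigr]H(r+t_0;\gamma_q)^{1/2}$ with $\bar{\varsigma}=L_0/L_b$ and $\gamma_q=32qL_b^2$, whence, up to a constant depending only on $q,L_b,L_0$,
\[
  \Norm{b(z,v(r,z))}_q\le c\,H(r+t_0;\gamma_q)^{1/2}\,J_0(r+t_0,z;\mu^*),\qquad\mu^*=1+|\mu|.
\]

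What remains is deterministic heat-kernel bookkeeping. Since $H(\cdot;\gamma_q)$ is nondecreasing (Theorem~\ref{T:ExUn}), $H(r+t_0;\gamma_q)\le H(s+t_0;\gamma_q)$ for $r\le s$ and this factor leaves the integral. For the remainder I would apply the factorization~\eqref{E:gsim} in the form $G(s-r,y-z)\,G(s-r,y-z+w)=G\bigl(\tfrac{s-r}{2},y-z+\tfrac{w}{2}\bigr)G\bigl(2(s-r),w\bigr)$, use the symmetry of $\Gamma$ and Cauchy--Schwarz to replace the product of the two $J_0$'s by half the sum of their squares, expand $J_0(r+t_0,z;\mu^*)^2=\bigl((G(r+t_0,\cdot)*\mu^*)(z)\bigr)^2$, and apply~\eqref{E:gsim} and the Chapman--Kolmogorov identity once more to carry out the $z$-integration; the residual dependence on $w$ is removed by elementary Gaussian estimates ($G(t,\cdot)\le(T/t)^{d/2}G(T,\cdot)$ for $0<t\le T$, and shift bounds $G(T,x+h)\le2^{d/2}\e^{|h|^2/(2T)}G(2T,x)$, whose exponential factor is absorbed against $G(2(s-r),w)$). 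What survives of the time integral is then a constant multiple of $\int_0^s(s-r)^{-2\alpha}\int_{\R^d}\Gamma(\ud w)\,G(2(s-r),w)\,\ud r$, which by the identity $\int_{\R^d}G(2r,w)\Gamma(\ud w)=(2\pi)^{-d}\int_{\R^d}\widehat f(\ud\xi)\,\e^{-r|\xi|^2}$ and the change of variables $r\mapsto s-r$ in~\eqref{E:H} equals $(2\pi)^{-d}\mathcal H_\alpha(s)$, while the surviving spatial factor is $J_0^2(s+t_0,y;\mu^*)$; this yields~\eqref{E:NormY}. Finiteness is then immediate: $\mathcal H_\alpha(s)<\infty$ by hypothesis, $H(s+t_0;\gamma_q)<\infty$ for every $s$ by Theorem~\ref{T:ExUn}, and $J_0(s+t_0,y;\mu^*)<\infty$ since $\mu$---hence $\mu^*$---satisfies~\eqref{A:icon}. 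I expect this bookkeeping to be the main obstacle: the repeated uses of~\eqref{E:gsim} must be organised so that the $w$-coupling between the $\Gamma$- and $\mu^*$-integrations dissolves cleanly and the time integral genuinely collapses to a multiple of $\mathcal H_\alpha(s)$ without generating a non-integrable singularity near $r=s$ (or, when $t_0=0$ with rough data, near $r=0$).

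Assertion~(2) is then a one-line consequence of Lemma~2.5 of~\cite{chen.kim:19:nonlinear} (already used in the proof of Corollary~\ref{C:Mom}): $\sup_{t\ge0}H(t;\gamma)<\infty$ whenever $2\gamma\,\Upsilon(0)<1$, and for $\gamma=\gamma_q=32qL_b^2$ this reads $64qL_b^2\,\Upsilon(0)<1$, i.e.\ $1/q>64L_b^2\,\Upsilon(0)$, which is exactly the left endpoint of the assumed range $1/q\in(64L_b^2\Upsilon(0),\alpha)$, with $\Upsilon(0)<\infty$ granted by~\eqref{E:Dalang-00}. For assertion~(3) I would apply Minkowski's integral inequality in $L^{q/2}(\Omega)$ to bring the stochastic norm inside the spatial integral,
\[
  \E\!\left(\Norm{Y_v(s,\cdot)}_\rho^q\right)
  =\Bigl\|\,\textstyle\int_{\R^d}|Y_v(s,y)|^2\rho(y)\,\ud y\,\Bigr\|_{q/2}^{q/2}
  \le\Bigl(\textstyle\int_{\R^d}\Norm{Y_v(s,y)}_q^2\,\rho(y)\,\ud y\Bigr)^{q/2},
\]
insert~\eqref{E:NormY}, recognise $\int_{\R^d}J_0^2(s+t_0,y;\mu^*)\rho(y)\,\ud y=\mathcal G_\rho(s+t_0;\mu^*)$ (see~\eqref{E:G}), and bound $H(s+t_0;\gamma_q)\le\sup_{t\ge0}H(t;\gamma_q)<\infty$ by~(2); integrating in $s$ over $(0,t)$ then gives~\eqref{E:yineq}, with $\Theta$ collecting $\sup_{t\ge0}H(t;\gamma_q)^{q/2}$ together with the constants suppressed in~(1) (hence its dependence on $q,L_b,L_0,\alpha$). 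Finally, finiteness follows from~\eqref{E:GH-int'} together with the elementary bounds $\mathcal G_\rho(\cdot;\mu^*)\le2\Norm{\rho}_{L^1(\R^d)}+2\,\mathcal G_\rho(\cdot;|\mu|)$ and $\int_0^t\mathcal H_\alpha(s)^{q/2}\,\ud s\le t\,\mathcal H_\alpha(t)^{q/2}<\infty$, the latter because $\mathcal H_\alpha$ is nondecreasing and finite for every $t>0$.
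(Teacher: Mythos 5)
Your overall skeleton coincides with the paper's: Burkholder--Davis--Gundy plus Minkowski to reduce $\Norm{Y_v(s,y)}_q^2$ to a deterministic double integral against the correlation; the linear-growth bound $|b(z,u)|\le L_0+L_b|u|$ combined with Theorem~\ref{T:ExUn} applied to the restarted equation to get $\Norm{b(z,v(r,z))}_q\le C\,H(s+t_0;32qL_b^2)^{1/2}J_0(r+t_0,z;\mu^*)$; and parts (2) and (3) exactly as in the paper (Lemma~2.5 of~\cite{chen.kim:19:nonlinear} for (2); Minkowski in $L^{q/2}(\Omega)$ plus~\eqref{E:NormY} and~\eqref{E:G} for (3)). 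Those two parts are fine as written.

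The gap sits in the heat-kernel bookkeeping of part (1), precisely at the step you yourself flag as the main obstacle. Replacing $J_0(r+t_0,z;\mu^*)\,J_0(r+t_0,z-w;\mu^*)$ by half the sum of squares destroys the cross-term structure that the estimate relies on when $t_0=0$ and $\mu$ is a genuinely rough measure (a case the lemma must cover; cf.\ the remark following it). Take $t_0=0$ and $\mu=\delta_0$. The diagonal term produced by your decoupling contains
\begin{align*}
  \int_{\R^d} G\!\left(\tfrac{s-r}{2},\,y-z+\tfrac{w}{2}\right) G(r,z)^2\,\ud z
  = G(2r,0)\, G\!\left(\tfrac{s}{2},\, y+\tfrac{w}{2}\right)
  = (4\pi r)^{-d/2}\, G\!\left(\tfrac{s}{2},\, y+\tfrac{w}{2}\right),
\end{align*}
by~\eqref{E:gsim} and Chapman--Kolmogorov, so the time integral acquires a factor $\int_0^s (s-r)^{-2\alpha} r^{-d/2}(\cdots)\,\ud r$ with $(\cdots)$ bounded away from $0$ near $r=0$; this diverges for every $d\ge 2$ (and the hypotheses force $d\ge3$). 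Equivalently, in general: after expanding $J_0^2(r+t_0,z;\mu^*)$ and integrating out $z$ you are left with $G(2(r+t_0),\sigma_1-\sigma_2)$, which cannot be dominated by $G(2(s+t_0),\sigma_1-\sigma_2)$ uniformly in $r$ --- the best constant is $\left((s+t_0)/(r+t_0)\right)^{d/2}$, unbounded as $r\downarrow0$ when $t_0=0$. The paper never decouples: it applies~\eqref{E:gsim} to each \emph{exact} pair $G(s-r,y-z_i)\,G(r+t_0,z_i-\sigma_i)=G(s+t_0,y-\sigma_i)\,G\!\left(\tfrac{(r+t_0)(s-r)}{s+t_0},\cdot\right)$, which extracts $J_0^2(s+t_0,y;\mu^*)$ as an identity, then applies Plancherel to the remaining $(z_1,z_2)$-integral against $f(z_1-z_2)$; the only inequality used afterwards is the monotonicity of $t_0\mapsto (r+t_0)/(s+t_0)$, which permits replacing the exponent by $r(s-r)/s$ and recognising $\mathcal{H}_\alpha(s)$ via a symmetry argument. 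If you pair each $G(s-r,\cdot)$ with the corresponding $G(r+t_0,\cdot)$ rather than with each other, the rest of your argument goes through.
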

\begin{remark}
  Condition~\eqref{E:GH-int'} is true for $t_0>0$ because $\mathcal{G}_\rho(t;\mu)$ is a continuous
  function for $t>0$ and $\mathcal{H}_\alpha(s)$ is continuous and bounded for $s\in [0,t]$ thanks
  to~\eqref{E:H}. However, when $t_0 = 0$, the situation is much trickier. For example, when the
  initial condition is the delta initial condition, we have that
  \begin{align*}
    \mathcal{G}_{\tilde{\rho}}(t;\delta_0)
    = \int_{\R^d} G(t,x)^2 \rho(x) \ud x
    = G(2t,0) \int_{\R^d} G(t/2,x)\rho(x)\ud x
    < \infty,
  \end{align*}
  where one can obtain the second equality via~\eqref{E:gsim}. Hence, when $s\to 0$,
  $\mathcal{G}_{\tilde{\rho}}(s;\delta_0)$ blows up with a rate $s^{-d/2}$. Considering that
  $\mathcal{H}_\alpha(s)$ goes to zero with a different rate, one needs to combine these two rates
  to check if condition~\eqref{E:GH-int'} holds. By introducing $t_0$ and restarting the heat
  equation, one can avoid this issue, that being the potential singularity of
  $\mathcal{G}_{\tilde{\rho}}$ at $s = 0$.
\end{remark}
\begin{proof}
  In the proof, we use $C$ to denote a generic constant that may change its value at each
  appearance. We first prove~\eqref{E:NormY}. By the Burkholder-Davis-Gundy inequality and
  Minkowski's integral inequality, we see that
  \begin{align*}
    \Norm{Y_v(s,y)}_q^2 \le
    C \int_0^s \ud r \: (s-r)^{-2\alpha}
    \iint_{\R^{2d}} \ud z_1\ud z_2 \: & G(s-r,y-z_1) \Norm{b\left(z_1,v(r,z_1)\right)}_q \\
    \times f(z_1-z_2)                 & G(s-r,y-z_2) \Norm{b\left(z_2,v(r,z_2)\right)}_q.
  \end{align*}
  Note that for the Lipschitz condition in~\eqref{E:lipcon}, we have that
  \begin{align*}
    \left|b\left(x,v\right)\right|
    & \le \left|b\left(x,v\right) - b\left(x,0\right)\right| + \left|b\left(x,0\right)\right|
    \le L_b |v| + L_0 \le C (1+|v|),\quad C\coloneqq L_b\vee L_0.
  \end{align*}
  We apply this and the moment bound~\eqref{E:mb} to $\Norm{b(z_i, v(r,z_i))}_q$ above to see that
  \begin{align}\label{E:NormB}
    \Norm{b(z_i, v(r,z_i))}_q
    & \le C\left(1+\Norm{v(r,z_i)}_q\right)                               \notag \\
    & =   C\left(1+\Norm{u(r+t_0,z_i)}_q\right)                           \notag \\
    & \le C H\left(r+t_0;32qL_b^2\right) J_0\left(r + t_0,z_i;\mu^*\right)\notag \\
    & \le C H\left(s+t_0;32qL_b^2\right) J_0\left(r + t_0,z_i;\mu^*\right), \quad i = 1,2, \: r\in(0,s),
  \end{align}
  where the last step is due to the fact that $H(t;\gamma)$ is a nondecreasing function; see Lemma~2.6 of~\cite{chen.kim:19:nonlinear}. Therefore, by denoting $C_s \coloneqq
  H\left(s+t_0;32qL_b^2\right)$,
  \begin{align*}
    \Norm{Y_v(s,y)}_q^2
    & \le C C_s \int_0^s \ud r \: (s-r)^{-2\alpha} \iint_{\R^{2d}} \ud z_1\ud z_2\: f(z_1-z_2) \prod_{i = 1}^2 \bigg(G(s-r,y-z_i) J_0\left(r + t_0,z_i;\mu^*\right)\bigg)      \\
    & = C C_s \int_0^s \ud r \: (s-r)^{-2\alpha} \iint_{\R^{2d}} \mu^* (\ud \sigma_1) \mu^* (\ud \sigma_2) \iint_{\R^{2d}} \ud z_1\ud z_2                                      \\
    & \quad \times f(z_1-z_2) \prod_{i = 1}^{2} \bigg(G(s-r,y-z_i)G(r + t_0,z_i-\sigma_i) \bigg)                                                                               \\
    & = C C_s\int_0^s \ud r \: (s-r)^{-2\alpha} \iint_{\R^{2d}} \mu^* (\ud \sigma_1) \mu^* (\ud \sigma_2)\: G(s + t_0,y-\sigma_1)G(s + t_0,y-\sigma_2)                         \\
    & \quad \times\iint_{\R^{2d}} \ud z_1\ud z_2 f(z_1-z_2) \prod_{i = 1}^2 G\left(\frac{(r+t_0)(s-r)}{s + t_0}, z_i-\sigma_i\frac{r + t_0}{s + t_0}-\frac{s-r}{s+t_0}y\right) \\
    & \le C C_s(2\pi)^{-2d} \int_0^s \ud r \: (s-r)^{-2\alpha} \iint_{\R^{2d}}\mu^* (\ud \sigma_1) \mu^* (\ud \sigma_2)\: G(s + t_0,y-\sigma_1)G(s + t_0, y-\sigma_2)          \\
    & \quad \times \int_{\R^d} \widehat{f}(\ud \xi) \exp\left(-\frac{(r + t_0)(s-r)}{s+ t_0}|\xi|^2\right),
  \end{align*}
  where we have applied~\eqref{E:gsim} and Plancherel's theorem. Hence,
  \begin{align*}
   \Norm{Y_v(s,y)}_q^2 \le C C_s (2\pi)^{-2d} J_0^2\left(s + t_0,y;\mu^*\right) \int_0^s \ud r \: (s-r)^{-2\alpha} \int_{\R^d} \widehat{f}(\ud \xi)\exp\left(-\frac{(r + t_0)(s-r)}{s + t_0}|\xi|^2\right).
  \end{align*}
  Because the function
  \begin{align*}
    t_0 \mapsto \frac{r + t_0}{s + t_0} = 1- \frac{s-r}{s+t_0} \quad \text{for $t_0 > 0$},
  \end{align*}
  is nondecreasing in $t_0 $ whenever $s > r > 0$, we can replace the two appearances of $t_0$ in
  the exponent of the above inequality by zero to see that
  \begin{equation}\label{E_:NormY}
    \hspace{-1em}
    \Norm{Y_v(s,y)}_q^2 \le C C_s (2\pi)^{-2d} J_0^2 (s + t_0,y;\mu^*) \int_0^s \ud r \: (s-r)^{-2\alpha} \int_{\R^d} \widehat{f}(\ud \xi)\exp\left(-\frac{r(s-r)}{s}|\xi|^2\right).
  \end{equation}
  Furthermore, by symmetry of $r(s-r)/s$ and the fact that $r(s-r)/s \ge r/2$ for all $r\in
  [0,s/2]$, we see that the above double integral is bounded by
  \begin{align*}
    \le & \: 2\int_0^{s/2} \ud r \: r^{-2\alpha}\int_{\R^d} \widehat{f}(\ud \xi)\exp\left(-\frac{r}{2}|\xi|^2\right)      \\
    =   & \: 2^{2(1-\alpha)} \int_0^{s/4} \ud r \: r^{-2\alpha}\int_{\R^d} \widehat{f}(\ud \xi)\exp\left(-r|\xi|^2\right) \\
    \le & \: 2^{2(1-\alpha)} \int_0^{s} \ud r \: r^{-2\alpha}\int_{\R^d} \widehat{f}(\ud \xi)\exp\left(-r|\xi|^2\right)   \\
    =   & \: 2^{2(1-\alpha)} \mathcal{H}_\alpha(s).
  \end{align*}
  Plugging the above bound back to~\eqref{E_:NormY} proves~\eqref{E:NormY}. \bigskip

  Part (2) is a direct consequence of Theorem~\ref{T:ExUn}. It remains to prove~\eqref{E:yineq}. An
  application of Minkowski's inequality shows that
  \begin{align}\label{E:EYqr}
    \E\left(\Norm{Y_v(s,\cdot)}_{\rho}^q\right)
    = \Norm{\int_{\R^d} Y_v(s,y)^2\rho(y) \ud y}_{q/2}^{q/2}
    \le \left(\int_{\R^d} \Norm{Y_v(s,y)}_q^2\rho(y) \ud y\right)^{q/2}.
  \end{align}
  By the definition of $\mathcal{G}_{\rho}(t;\mu)$ in~\eqref{E:G} and by~\eqref{E:NormY}, we see
  that
  \begin{align*}
    \int_{\R^d}\Norm{Y_v(s,y)}_q^2 \: \rho(y)\ud y
    & \le C \: \mathcal{G}_{\rho}(s + t_0;\mu^*) \mathcal{H}_\alpha(s).
  \end{align*}
  Plugging the above expression to the far right-hand side of~\eqref{E:EYqr} proves~\eqref{E:yineq}.
  Finally, the finiteness of the upper bound in~\eqref{E:yineq} is guaranteed by
  condition~\eqref{E:GH-int'}. This completes the proof of Lemma~\ref{L:yineq}.
\end{proof}

\begin{lemma}[Factorization lemma]\label{L:Factorize}
  Suppose that $\mu$ --- the initial condition for $u$ --- satisfies~\eqref{A:icon} and
  $\widehat{f}$ satisfies Dalang's condition~\eqref{E:Dalang}. Assume that condition~\eqref{E:H} is
  satisfied for some $\alpha\in (0,1/2)$. Fix an arbitrary $ t_0\ge 0$. Let $v(t,x)$ be the solution
  to the restarted SHE~\eqref{E:restart} and $\dot{W}_{t_0}$ be the time-shifted noise
  (see~\eqref{E:ShiftW}) when $t_0>0$ and let $v = u$ when $ t_0 = 0$. Then the following
  factorization holds
  \begin{equation*}
    \dfrac{\sin(\alpha \pi)}{\pi}
    \int_0^t (t-s)^{\alpha-1}\left[G(t-s,\cdot)*Y_v(s,\cdot)\right](x) \ud s
    = \int_0^t \int_{\R^d} G(t-r,x-z)b(z,v(r,z)) W_{t_0}(\ud r,\ud z),
  \end{equation*}
  for all $t>0$ and $x\in\R^d$. As a consequence,
  \begin{equation}\label{E:1fac}
    v(t,x) = \left[G(t, \cdot) * u(t_0,\cdot; \mu)\right](x) +\dfrac{\sin(\alpha \pi)}{\pi} \left[F_\alpha Y_v\right](t,x),
    \quad \text{for all $t>0$ and $x\in\R^d$}.
  \end{equation}
\end{lemma}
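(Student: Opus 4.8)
The plan is to prove the factorization identity via the classical stochastic Fubini argument of Da Prato--Kwapień--Zabczyk, then deduce \eqref{E:1fac} by substituting the identity into the mild form of the restarted equation \eqref{E:restart}. First I would start from the left-hand side, plug in the definition \eqref{E:Y} of $Y_v(s,\cdot)$, and write
\begin{equation*}
  \frac{\sin(\alpha\pi)}{\pi}\int_0^t (t-s)^{\alpha-1}\left[G(t-s,\cdot)*Y_v(s,\cdot)\right](x)\,\ud s
  = \frac{\sin(\alpha\pi)}{\pi}\int_0^t (t-s)^{\alpha-1}\!\int_{\R^d}\!G(t-s,x-y)\int_0^s\!\int_{\R^d}(s-r)^{-\alpha}G(s-r,y-z)b(z,v(r,z))W_{t_0}(\ud r,\ud z)\,\ud y\,\ud s.
\end{equation*}
The heart of the matter is to interchange the order of the deterministic $(s,y)$-integration with the Walsh stochastic integral in $(r,z)$; once this is justified, the $s$-integral becomes $\int_r^t (t-s)^{\alpha-1}(s-r)^{-\alpha}\,\ud s$, which after the substitution $s = r + (t-r)\theta$ equals the Beta integral $B(\alpha,1-\alpha) = \pi/\sin(\alpha\pi)$, independent of $r$ and $t$; the $y$-integral collapses via the Chapman--Kolmogorov semigroup property $\int_{\R^d}G(t-s,x-y)G(s-r,y-z)\,\ud y = G(t-r,x-z)$. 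The prefactor $\sin(\alpha\pi)/\pi$ then cancels the Beta constant, leaving exactly $\int_0^t\int_{\R^d}G(t-r,x-z)b(z,v(r,z))W_{t_0}(\ud r,\ud z)$, which is the first displayed identity.

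To justify the stochastic Fubini step I would invoke the stochastic Fubini theorem for Walsh integrals (as in Walsh~\cite{walsh:86:introduction} or the version in~\cite{da-prato.zabczyk:14:stochastic}), whose hypothesis reduces to the finiteness of
\begin{equation*}
  \int_0^t (t-s)^{\alpha-1}\!\int_{\R^d}\!G(t-s,x-y)\left(\E\left[\int_0^s\!\int_{\R^{2d}}(s-r)^{-2\alpha}G(s-r,y-z_1)f(z_1-z_2)G(s-r,y-z_2)b(z_1,v(r,z_1))b(z_2,v(r,z_2))\,\ud z_1\ud z_2\ud r\right]^{1/2}\right)\ud y\,\ud s.
\end{equation*}
The inner conditional-second-moment quantity is, up to a constant, exactly what was bounded in the proof of Lemma~\ref{L:yineq}: it is dominated by $C\,H(s+t_0;32qL_b^2)^{1/2}\,J_0(s+t_0,y;\mu^*)\,\mathcal{H}_\alpha(s)^{1/2}$ with $q = 2$. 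Using that $H(\cdot\,;\cdot)$ is nondecreasing and hence bounded on $[0,t_0+t]$, that $\mathcal{H}_\alpha$ is finite and continuous on $[0,t]$ by condition~\eqref{E:H}, and that $\int_{\R^d}G(t-s,x-y)J_0(s+t_0,y;\mu^*)\,\ud y = J_0(t+t_0,x;\mu^*) < \infty$ by~\eqref{A:icon} and the semigroup property, the whole triple integral is bounded by $C\,J_0(t+t_0,x;\mu^*)\int_0^t (t-s)^{\alpha-1}\mathcal{H}_\alpha(s)^{1/2}\,\ud s < \infty$ since $\alpha - 1 > -1$. This verifies the Fubini hypothesis, and as a byproduct shows $Y_v(s,\cdot)$ is a well-defined element of $L^2(\Omega)$ pointwise so the left-hand side makes sense.

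For the consequence \eqref{E:1fac}, I would write the mild form of the restarted SHE~\eqref{E:restart}: $v(t,x) = \left[G(t,\cdot)*u(t_0,\cdot;\mu)\right](x) + \int_0^t\int_{\R^d}G(t-r,x-z)b(z,v(r,z))W_{t_0}(\ud r,\ud z)$, and then replace the stochastic-integral term by the left-hand side of the factorization identity just proved, recognizing $\int_0^t (t-s)^{\alpha-1}\left[G(t-s,\cdot)*Y_v(s,\cdot)\right](x)\,\ud s$ as $\left(F_\alpha Y_v\right)(t,x)$ by the definition~\eqref{E:Ops} of $F_\alpha$ (with $f(s,y) = Y_v(s,y)$). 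This gives \eqref{E:1fac} directly; when $t_0 = 0$ one reads $v = u$ and $W_{t_0} = W$ throughout, and the argument is identical. The main obstacle is purely the stochastic Fubini justification, i.e., producing the quantitative integrability bound above with enough care that the $(t-s)^{\alpha-1}$ singularity at $s = t$ and — when $t_0 = 0$ — any singularity of $\mathcal{G}$-type factors at $s = 0$ are both integrable; the $\alpha < 1/2$ and $\alpha > 0$ restrictions are exactly what make the Beta integral converge and the singularities harmless, and for $t_0 = 0$ one additionally leans on $\mathcal{H}_\alpha(s)\to 0$ as $s\to 0$ together with the explicit rate of the relevant $J_0^2$ or $\mathcal{G}$ blow-up, as discussed in the Remark following Lemma~\ref{L:yineq}.
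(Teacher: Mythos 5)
Your proposal is correct and follows essentially the same route as the paper: interchange the deterministic $(s,y)$-integration with the Walsh integral via stochastic Fubini, collapse the heat kernels by Chapman--Kolmogorov, evaluate the Beta integral $\int_r^t(t-s)^{\alpha-1}(s-r)^{-\alpha}\,\ud s=\pi/\sin(\alpha\pi)$, and substitute into the mild form of~\eqref{E:restart}. The only (harmless) difference is organizational: you verify the Fubini hypothesis in one shot against the product measure $(t-s)^{\alpha-1}G(t-s,x-y)\,\ud y\,\ud s$ using the square root of the bound~\eqref{E:NormY} together with $\int_{\R^d}G(t-s,x-y)J_0(s+t_0,y;\mu^*)\,\ud y=J_0(t+t_0,x;\mu^*)$, whereas the paper performs two successive interchanges and checks the corresponding second-moment conditions $I_1$ and $I_2$ separately.
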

\begin{proof}
  The lemma is straightforward provided that one can switch the orders of stochastic and ordinary
  integrals:
  \begin{align}
    \MoveEqLeft[4] \int_0^t (t-s)^{\alpha-1}\left[G(t-s,\cdot)*Y_v(s,\cdot)\right](x) \ud s \notag \\
    = & \int_0^t \ud s\: (t-s)^{\alpha-1}\int_{\R^d} \ud y\: G(t-s,x-y) \notag                                                                \\
      & \times \int_0^s\: \int_{\R^d} \: (s-r)^{-\alpha}G(s-r,y-z)b(z,v(r,z))W_{t_0}(\ud r,\ud z) \notag                                      \\
    = & \int_0^t \ud s\: (t-s)^{\alpha-1} \int_0^s \int_{\R^d} \: (s-r)^{-\alpha}G(t-r,x-z)b(z,v(r,z))W_{t_0}(\ud r,\ud z) \label{E:Fubini_1} \\
    = & \int_0^t \int_{\R^d} W(\ud r,\ud z)G(t-r,x-z)b(z,v(r,z)) \int_r^t \ud s\: (s-r)^{-\alpha} (t-s)^{\alpha-1}\label{E:Fubini_2}          \\
    = & \dfrac{\pi}{\sin(\alpha \pi)}\int_0^t \int_{\R^d} G(t-r,x-z)b(z,v(r,z)) W_{t_0}(\ud r,\ud z),\notag
  \end{align}
  where the last step is the \textit{Beta integral} which requires that $\alpha\in (0,1)$. It
  remains to justify the two applications of the stochastic Fubini's theorem (see Theorem 5.30 of
  Chapter one in~\cite{dalang.khoshnevisan.ea:09:minicourse}, or also~\cite{walsh:86:introduction}
  or Theorem 4.33 of~\cite{da-prato.zabczyk:14:stochastic}) in~\eqref{E:Fubini_1}
  and~\eqref{E:Fubini_2} in the following two steps. \bigskip

  \textbf{\noindent Step 1.~} In this step, we justify the change of orders in~\eqref{E:Fubini_1}.
  Note that $t, x$ and $s$ are fixed. It suffices to prove the following condition:
  \begin{align*}
    I_1\coloneqq & \int_{\R^d}\ud y \: G(t-s,x-y) \int_0^s \ud r\: (s-r)^{-2\alpha} \iint_{\R^{2d}} \ud z_1\ud z_2\:                \\
                 & \times f(z_1-z_2) \left(\prod_{i = 1}^{2} G(s-r,y-z_i)\right) \: \E\left(\prod_{i = 1}^{2}b(z_i,v(r,z_i))\right) \\
       =         & \int_{\R^d}\ud y \: G(t-s,x-y) \Norm{Y_v(s,y)}_2^2 < +\infty,
  \end{align*}
  which follows immediately from~\eqref{E:NormY}. Indeed,
  \begin{align*}
    \int_{\R^d}\ud y \: G(t-s,x-y) \Norm{Y_v(s,y)}_2^2
    \le & C \int_{\R^d} \ud y \; G(t-s,x-y) J_0^2 (s+t_0,y;\mu^*) \mathcal{H}_\alpha(s)                        \\
     =  & C \mathcal{H}_\alpha(s) \int_{\R^d} \ud y \; G(t-s,x-y)\iint_{\R^{2d}}\mu^* (\ud z_1)\mu^* (\ud z_2) \\
        & \times G\left(s+t_0,y-z_1\right)G\left(s+t_0,y-z_2\right).
  \end{align*}
  Now we bound the three heat kernels using~\eqref{E:gsim} as follows:
  \begin{align*}
     \MoveEqLeft G (t-s,x-y) \prod_{i = 1}^{2}G(s+t_0,y- z_i)\\
       & = \frac{G\left(2(t-s),x-y\right)^2}{G\left(4(t-s,0\right)} \prod_{i = 1}^{2}G\left(s+t_0,y- z_i\right)                                                  \\
       & \le 2^d \frac{G\left(2(t-s),x-y\right)^2}{G\left(4(t-s),0\right)} \prod_{i = 1}^{2}G\left(2s+2t_0,y- z_i\right)                                         \\
       & = 2^d [4(t-s)]^{d/2} \prod_{i = 1}^{2}\bigg[ G\left(2s+2t_0,y- z_i\right)G\left(2(t-s),x-y\right)\bigg]                                                 \\
       & = 2^{2d} (t-s)^{d/2} \prod_{i = 1}^{2}\left[G\left(2(t+t_0),x- z_i\right)G\left(\frac{2(t-s)(s+t_0)}{t+t_0},y-\frac{s+t_0}{t+t_0}(x- z_i)\right)\right] \\
       & \le 2^{2d} (t-s)^{d/2} \prod_{i = 1}^{2}\left[G\left(2(t+t_0),x- z_i\right)G\left(\frac{2(t-s)(s+t_0)}{t+t_0},0\right)\right]                           \\
       & \le C_{t,s,t_0} \prod_{i = 1}^{2} G\left(2(t+t_0),x- z_i\right).
  \end{align*}
  Therefore, $I_1\le C_{t,s,t_0} \: \mathcal{H}_{\alpha}(s) \: J_0^2\left(2(t+t_0),x;\mu^*\right)
  <\infty$. \bigskip

  \textbf{\noindent Step 2.~} Similarly, as for~\eqref{E:Fubini_2}, we need to show that
  \begin{align*}
    I_2\coloneqq & \int_0^t \ud s \: (t-s)^{\alpha-1} \int_0^s \ud r\: (s-r)^{-2\alpha} \iint_{\R^{2d}} \ud z_1\ud z_2\: \\
                 & \times f(z_1-z_2) \left(\prod_{i = 1}^{2} G(t-r,x-z_i)\right) \: \E\left(\prod_{i = 1}^{2}b(z_i,v(r,z_i))\right) < \infty.
  \end{align*}
  By the Cauchy Schwartz inequality,~\eqref{E:NormB} and because $\alpha\in (0,1/2)$,
  \begin{align*}
    I_2\le & C \int_0^t \ud s \: (t-s)^{\alpha-1} \int_0^s \ud r\: (s-r)^{-2\alpha}                                                        \\
           & \times \iint_{\R^{2d}} \ud z_1\ud z_2\:f(z_1-z_2) \prod_{i = 1}^{2}\bigg(G(t-r,x-z_i) J_0\left(r + t_0,z_i;\mu^*\right)\bigg) \\
         = & C' \int_0^t \ud r \: (t-r)^{-\alpha} \iint_{\R^{2d}} \ud z_1\ud z_2\:f(z_1-z_2) \prod_{i = 1}^{2}\bigg(G(t-r,x-z_i) J_0\left(r + t_0, z_i;\mu^*\right)\bigg).
  \end{align*}
  Now by the same arguments as those leading to~\eqref{E:NormY} (with $2\alpha$ there replaced by
  $\alpha$), we see that
  \begin{align*}
    I_2 & \le C J_0^2\left(t,x; \mu^*\right) \int_0^t \ud r \: r^{-\alpha} \int_{\R^d} \widehat{f}(\ud \xi)\exp\left(-\frac{r(t-r)}{t}|\xi|^2\right),
  \end{align*}
  which is finite by~\eqref{E:H} where we replace $\alpha$ with $\alpha/2$ and repeat the same steps
  right after~\eqref{E_:NormY}. This completes the proof of Lemma~\ref{L:Factorize}.
\end{proof}

\begin{codes}

Integrate[(s-r)^{-a} (t-s)^{a-1},{s,r,t},Assumptions->{t>s>0}]

Integrate[(t-s)^{a-1} (s-r)^{-2a},{s,r,t},Assumptions->{t>r>0,0<a<1/2}]

\end{codes}

\begin{codes}

  Integrate[Exp[-s x],{s,0,Infinity}, Assumptions->x>0]

\end{codes}

Finally, we characterize conditions~\eqref{E:Dalang-a0} and~\eqref{E:H} in the following lemma:

\begin{lemma}\label{L:HUA}
  For all $\alpha\in (0,1/2]$, we have the following properties:
  \begin{enumerate}[(1)]
  \item $\left(2\pi\right)^{-d}\mathcal{H}_\alpha(t)\le
    \Gamma\left(1-2\alpha\right)\Upsilon_{2\alpha}(0)$ for all $t>0$ and hence
    \begin{align}\label{E:HUA}
      \Upsilon_{2\alpha}(0)<\infty \quad \Longrightarrow \quad \mathcal{H}_\alpha(t)<\infty \quad \text{for
      all $t>0$};
    \end{align}
  \item $\lim_{t\to\infty}\left(2\pi\right)^{-d}\mathcal{H}_\alpha(t)= \Gamma\left(1-2\alpha\right)
    \Upsilon_{2\alpha}(0)$;
  \item if $\Upsilon(0)<\infty$, then the reverse implication of~\eqref{E:HUA} holds.
\end{enumerate}
\end{lemma}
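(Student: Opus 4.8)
The plan is to work directly from the definition
\[
\mathcal{H}_\alpha(t) = \int_0^t \ud r\: r^{-2\alpha} \int_{\R^d} \widehat{f}(\ud\xi)\: \e^{-r|\xi|^2},
\]
exchange the order of integration by Tonelli (everything is nonnegative), and compute the inner $r$-integral explicitly. For part (1), I would first extend the $r$-integral from $[0,t]$ to $[0,\infty)$, which only increases the value, so that
\[
\mathcal{H}_\alpha(t) \le \int_{\R^d} \widehat{f}(\ud\xi) \int_0^\infty r^{-2\alpha}\e^{-r|\xi|^2}\ud r
= \int_{\R^d} \widehat{f}(\ud\xi)\: |\xi|^{-2(1-2\alpha)}\,\Gamma(1-2\alpha),
\]
where the substitution $u = r|\xi|^2$ gives the Gamma integral and requires $1-2\alpha > 0$, i.e. $\alpha < 1/2$; the endpoint $\alpha = 1/2$ must be handled separately since $\Gamma(0) = \infty$ (here $r^{-1}$ is not integrable near $0$, so the claimed bound is vacuously true, or one restricts the statement to $\alpha \in (0,1/2)$ — I would check which reading the authors intend, but in either case there is nothing to prove at the endpoint). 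Recognizing $(2\pi)^{-d}\int_{\R^d}\widehat{f}(\ud\xi)|\xi|^{-2(1-2\alpha)} = \Upsilon_{2\alpha}(0)$ from~\eqref{E:Dalang-a0} gives the stated inequality, and the implication~\eqref{E:HUA} is then immediate.

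For part (2), the same Tonelli exchange gives $(2\pi)^{-d}\mathcal{H}_\alpha(t) = \int_{\R^d}\widehat{f}(\ud\xi)\int_0^t r^{-2\alpha}\e^{-r|\xi|^2}\ud r$, and as $t\to\infty$ the inner integral increases monotonically to $\Gamma(1-2\alpha)|\xi|^{-2(1-2\alpha)}$; so by the monotone convergence theorem the whole expression converges to $\Gamma(1-2\alpha)\Upsilon_{2\alpha}(0)$, whether this limit is finite or $+\infty$. This step is essentially free once part (1) is set up.

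For part (3) — the reverse implication under the extra hypothesis $\Upsilon(0) < \infty$ — I would argue that it suffices to show $\mathcal{H}_\alpha(t) < \infty$ for \emph{some} $t$ forces $\Upsilon_{2\alpha}(0) < \infty$. Split the spectral integral into the low-frequency region $\{|\xi| \le 1\}$ and the high-frequency region $\{|\xi| > 1\}$. On $\{|\xi| > 1\}$ we have $|\xi|^{-2(1-2\alpha)} \le |\xi|^{-2}\cdot|\xi|^{4\alpha}$... actually more cleanly: $|\xi|^{-2(1-2\alpha)} \le |\xi|^{-2}$ fails for $|\xi|>1$ when $\alpha>0$, so instead use that on $\{|\xi|>1\}$, $|\xi|^{-2(1-2\alpha)} \le 1$ when $\alpha \le 1/2$, and $\widehat f$ is a tempered (in particular locally finite, but one needs finiteness of mass on $\{|\xi|>1\}$) — this is where $\Upsilon(0)<\infty$ enters, since $\int_{|\xi|>1}\widehat f(\ud\xi) \le \int_{|\xi|>1}\frac{2\widehat f(\ud\xi)}{|\xi|^2} \le 2(2\pi)^d\Upsilon(0) < \infty$. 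On the low-frequency region $\{|\xi|\le 1\}$, fix $t>0$ with $\mathcal{H}_\alpha(t)<\infty$; for $|\xi|\le 1$ and $r \in (0,t]$ we have $\e^{-r|\xi|^2} \ge \e^{-r} \ge \e^{-t}$, so
\[
\mathcal{H}_\alpha(t) \ge \e^{-t}\int_0^t r^{-2\alpha}\ud r \int_{|\xi|\le 1}\widehat f(\ud\xi) = \e^{-t}\,\frac{t^{1-2\alpha}}{1-2\alpha}\int_{|\xi|\le 1}\widehat f(\ud\xi),
\]
forcing $\int_{|\xi|\le1}\widehat f(\ud\xi) < \infty$; combined with $|\xi|^{-2(1-2\alpha)} \le 1$ on $\{|\xi|\ge 1\}$ and... wait, on $\{|\xi|\le 1\}$ the weight $|\xi|^{-2(1-2\alpha)}$ blows up, so finiteness of mass near $0$ is not enough. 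I would instead mimic the lower bound more carefully: for $|\xi|\le 1$, $\int_0^t r^{-2\alpha}\e^{-r|\xi|^2}\ud r \ge \int_0^{|\xi|^{-2}\wedge t} r^{-2\alpha}\e^{-1}\ud r$, and since $|\xi|\le 1$ implies $|\xi|^{-2}\ge 1$, for $|\xi|\le t^{-1/2}$ this is $\ge \e^{-1}\frac{t^{1-2\alpha}}{1-2\alpha}$ — still only mass, not the singular weight. The correct move: restrict to $|\xi|$ ranging over \emph{all} of $\{|\xi|\ge t^{-1/2}\}$ is the wrong direction too. Let me reconsider — I think the right argument is to NOT truncate $r$ at $t$ but observe that $\mathcal H_\alpha$ being finite at one $t$, together with part (2)'s monotone limit, does not directly give finiteness of the limit; rather part (3) should be read as: $\Upsilon(0)<\infty$ plus $\mathcal H_\alpha(t)<\infty$ for all $t$ implies $\Upsilon_{2\alpha}(0)<\infty$. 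For that, use $\int_0^\infty r^{-2\alpha}\e^{-r|\xi|^2}\ud r = \Gamma(1-2\alpha)|\xi|^{-2(1-2\alpha)}$ and split: the integral over $r\in(0,1]$ contributes a finite multiple of $\min(1, |\xi|^{-2(1-2\alpha)})$-type bound controlled by $\mathcal H_\alpha(1)$ on $\{|\xi|\le 1\}$ giving the singular part, while on $\{|\xi|\ge 1\}$ the tail $\int_1^\infty r^{-2\alpha}\e^{-r|\xi|^2}\ud r \le \int_1^\infty \e^{-r|\xi|^2}\ud r = |\xi|^{-2}\e^{-|\xi|^2} \le |\xi|^{-2}$, integrable against $\widehat f$ by $\Upsilon(0)<\infty$. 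So $\Gamma(1-2\alpha)\Upsilon_{2\alpha}(0) = (2\pi)^{-d}\lim_t \mathcal H_\alpha(t) \le (2\pi)^{-d}\mathcal H_\alpha(1) + \text{(const)}\Upsilon(0) < \infty$. \textbf{The main obstacle} is precisely this bookkeeping in part (3): cleanly separating the role of $\mathcal{H}_\alpha$ (which supplies control of the singular low-frequency weight $|\xi|^{-2(1-2\alpha)}$) from the role of $\Upsilon(0)<\infty$ (which supplies control of the high-frequency mass via the extra decay of $\e^{-r|\xi|^2}$ for $r$ bounded below), and making sure the decomposition of the $r$-integral at $r=1$ lines up with the decomposition of the $\xi$-integral at $|\xi|=1$. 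Everything else is a routine Tonelli-plus-Gamma-function computation.
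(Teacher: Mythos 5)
Your proposal is correct and, once the false starts in part (3) are stripped away, it coincides with the paper's own argument: parts (1) and (2) are the same Tonelli-plus-Gamma-integral computation with monotone convergence, and your final version of part (3) — splitting the $r$-integral at $r=1$, controlling $\int_0^1$ by $\mathcal{H}_\alpha(1)$ and bounding the tail by $\int_1^\infty r^{-2\alpha}e^{-r|\xi|^2}\,\ud r\le |\xi|^{-2}$ so that $\Upsilon(0)<\infty$ finishes the job — is exactly the paper's decomposition $\Gamma(1-2\alpha)(2\pi)^d\Upsilon_{2\alpha}(0)=\mathcal{H}_\alpha(t)+I_\alpha(t)$ with $I_\alpha(t)\le t^{-2\alpha}(2\pi)^d\Upsilon(0)$, specialized to $t=1$. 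The earlier attempted frequency-space split at $|\xi|=1$ was indeed a dead end, and your observation that the endpoint $\alpha=1/2$ makes the bound vacuous is consistent with the paper, which does not treat it separately either.
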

\begin{proof}
  We only need to consider the case when $\alpha>0$. It is clear that the function
  $\mathcal{H}_\alpha(t)$ is nondecreasing. Hence, part (2) implies part (1). As for part (2), by
  Fubini's theorem,
  \begin{align}\label{E:Fubini}
    \lim_{t\to\infty}\mathcal{H}_\alpha(t)
    & = \int_0^\infty \ud r \: r^{-2\alpha} \int_{\R^d} \widehat{f}(\ud\xi)e^{-r|\xi|^2}
      = \Gamma(1-2\alpha) \int_{\R^d} \frac{\widehat{f}(\ud\xi)}{|\xi|^{2\left(1-2\alpha\right)}}
      = C \Upsilon_{2\alpha}(0),
  \end{align}
  with $C\coloneqq \Gamma(1-2\alpha)(2\pi)^d$. Now for part (3), for any $t>0$, by splitting the $\ud r$
  integral in~\eqref{E:Fubini} into two parts, we see that
  \begin{align*}
    C \Upsilon_{2\alpha}(0) & = \int_0^\infty \ud r \: r^{-2\alpha} \int_{\R^d} \widehat{f}(\ud\xi)\exp\left(-r|\xi|^2\right) = \mathcal{H}_\alpha(t) + I_\alpha(t), \quad \text{with} \\
    I_\alpha(t)             & = \int_t^\infty \ud r \: r^{-2\alpha} \int_{\R^d}
    \widehat{f}(\ud\xi)\exp\left(-r|\xi|^2\right).
  \end{align*}
  Notice that
  \begin{align*}
    I_\alpha(t) \le t^{-2\alpha} \int_t^\infty \ud r \int_{\R^d} \widehat{f}(\ud\xi)e^{-r|\xi|^2}
    = t^{-2\alpha} \int_{\R^d} \frac{\widehat{f}(\ud\xi)}{|\xi|^2} e^{-t|\xi|^2}
    \le t^{-2\alpha} \int_{\R^d} \frac{\widehat{f}(\ud\xi)}{|\xi|^2}
    = \frac{\left(2\pi\right)^d}{t^{2\alpha}} \Upsilon(0).
  \end{align*}
  Therefore,
  \begin{align*}
    \Upsilon_{2\alpha}(0) \le \frac{\mathcal{H}_\alpha(t)}{(2\pi)^d\Gamma(1-2\alpha)} +
    \frac{\Upsilon(0)}{\Gamma\left(1-2\alpha\right)t^{2\alpha}} <\infty, \quad \text{for all $t>0$,}
  \end{align*}
  which proves part (3).
\end{proof}

\section{Tightness and construction -- Proof of Theorem~\ref{T:Main}}\label{S:mainproof}
\subsection{Proof of part (a) of Theorem~\ref{T:Main}}

Before we start the proof of part (a) of Theorem~\ref{T:Main}, we first recall the following result:

\begin{proposition}[Proposition 2.1 of~\cite{tessitore.zabczyk:98:invariant}]\label{P:weight}
  For any admissible weight $\rho$, the operators on $L^2_\rho(\R^d)$ defined by $\varphi \mapsto
  \big(G(t,\cdot)*\varphi(\cdot)\big)(x)$ can be extended to a $C_0-semigroup$ on $L^2_\rho(\R^d)$.
  Moreover, if $\tilde{\rho}$ is another admissible weight such that
	\begin{align*}
	\int_{\R^d}\dfrac{\rho(x)}{\tilde{\rho}(x)}\ud x < \infty,
	\end{align*}
	then for any $t>0$, the operators defined above are compact from $L^2_{\tilde{\rho}}(\R^d)$ to
	$L^2_\rho(\R^d)$.
\end{proposition}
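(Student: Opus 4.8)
The plan is to prove the two assertions in turn. Throughout I write $P_t\varphi \coloneqq G(t,\cdot)*\varphi$.

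\textbf{The $C_0$-semigroup.} I would check the defining properties one at a time. For \emph{boundedness} on $L^2_\rho(\R^d)$, I repeat the estimate already carried out in the proof of Corollary~\ref{C:inclusion}: Jensen's inequality applied to the probability density $G(t,\cdot)$ gives $|P_t\varphi(x)|^2 \le (G(t,\cdot)*|\varphi|^2)(x)$, and then Fubini together with the admissibility inequality~\eqref{E:aw} yields
\[
  \Norm{P_t\varphi}_\rho^2 \le \int_{\R^d} \big(G(t,\cdot)*\rho\big)(y)\,|\varphi(y)|^2\,\ud y \le C_\rho(T)\,\Norm{\varphi}_\rho^2, \qquad t\in[0,T].
\]
In particular each $P_t$ is a well-defined bounded operator on $L^2_\rho(\R^d)$ (this is the "extension" referred to in the statement) and $\sup_{t\le T}\Norm{P_t}<\infty$ for every $T$. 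The \emph{semigroup law} $P_tP_s=P_{t+s}$ follows from the Chapman--Kolmogorov identity $G(t,\cdot)*G(s,\cdot)=G(t+s,\cdot)$ and associativity of convolution (Fubini, applied first to $|\varphi|$). For \emph{strong continuity at $0$}, note that since $\rho\in L^1(\R^d)$ every bounded function lies in $L^2_\rho(\R^d)$; for $\varphi\in C_c(\R^d)$ one has $P_t\varphi(x)\to\varphi(x)$ for every $x$ and $|P_t\varphi-\varphi|\le 2\Norm{\varphi}_\infty$, so dominated convergence gives $\Norm{P_t\varphi-\varphi}_\rho\to0$ as $t\downarrow0$. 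Since $C_c(\R^d)$ is dense in $L^2_\rho(\R^d)$ (it is dense in $L^p$ of any Radon measure; alternatively truncate to a ball and use that $\rho$, being continuous and strictly positive, is bounded above and below on compacts) and $\sup_{t\le1}\Norm{P_t}<\infty$, a standard $\varepsilon/3$ argument upgrades this to strong continuity on all of $L^2_\rho(\R^d)$.

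\textbf{Compactness from $L^2_{\tilde\rho}$ to $L^2_\rho$.} Here the plan is to show that $P_t$ is Hilbert--Schmidt for each fixed $t>0$, which forces compactness. Conjugating by the unitaries $U_\rho\colon L^2_\rho\to L^2$, $U_\rho f=\sqrt\rho\,f$, and $U_{\tilde\rho}\colon L^2_{\tilde\rho}\to L^2$, the operator $P_t$ is unitarily equivalent to the integral operator on the unweighted space $L^2(\R^d)$ with kernel $K(x,y)=\sqrt{\rho(x)}\,G(t,x-y)\,\tilde\rho(y)^{-1/2}$. Using the identity $G(t,z)^2=G(2t,0)\,G(t/2,z)$ (a special case of~\eqref{E:gsim}) and Fubini,
\[
  \iint_{\R^{2d}} K(x,y)^2\,\ud x\,\ud y = G(2t,0)\int_{\R^d}\frac{\big(G(t/2,\cdot)*\rho\big)(y)}{\tilde\rho(y)}\,\ud y \le G(2t,0)\,C_\rho(t/2)\int_{\R^d}\frac{\rho(y)}{\tilde\rho(y)}\,\ud y <\infty,
\]
where the penultimate step uses the admissibility of $\rho$ (with $T=t/2$) and the last uses hypothesis~\eqref{E:rhorho}. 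Thus $K\in L^2(\R^{2d})$, the conjugated operator is Hilbert--Schmidt, and therefore $P_t\colon L^2_{\tilde\rho}(\R^d)\to L^2_\rho(\R^d)$ is compact.

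\textbf{Main obstacle.} I expect the only genuinely delicate point to be the strong continuity at $t=0$: a direct dominated-convergence argument for a general $\varphi\in L^2_\rho(\R^d)$ is not available, since the natural majorant $\sup_{t\le1}(G(t,\cdot)*|\varphi|^2)$ — controlled by the Hardy--Littlewood maximal function of $|\varphi|^2$ — need not be $\rho$-integrable; hence the reduction to $C_c(\R^d)$ followed by density and the uniform operator bound. Everything else is routine: the semigroup law is a one-line consequence of Chapman--Kolmogorov, and the compactness collapses to the short Hilbert--Schmidt estimate above, which feeds directly on the admissibility inequality~\eqref{E:aw} and on~\eqref{E:rhorho}.
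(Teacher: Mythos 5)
Your proposal is correct. Note, however, that the paper does not prove this statement at all: it is imported verbatim as Proposition 2.1 of the cited work of Tessitore and Zabczyk, so there is no in-paper argument to compare against. What you have written is a complete, self-contained substitute. The boundedness estimate via Jensen and the admissibility inequality~\eqref{E:aw} is exactly the computation already used in the proof of Corollary~\ref{C:inclusion}, and your reduction of strong continuity to $C_c(\R^d)$ (dense in $L^2_\rho(\R^d)$ because $\rho\,\ud x$ is a finite Radon measure) plus the locally uniform operator bound is the standard and correct way around the failure of a direct dominated-convergence argument. For the second assertion, your route is arguably stronger than what is needed: the unitary conjugation to an integral operator on $L^2(\R^d)$ with kernel $K(x,y)=\sqrt{\rho(x)}\,G(t,x-y)\,\tilde\rho(y)^{-1/2}$, combined with the identity $G(t,z)^2=G(2t,0)G(t/2,z)$ (a special case of~\eqref{E:gsim}), the admissibility of $\rho$, and hypothesis~\eqref{E:rhorho}, shows the operator is Hilbert--Schmidt, not merely compact. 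This is a clean quantitative argument that uses only the two structural hypotheses of the proposition, and it would be a legitimate way to make the paper self-contained on this point.
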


\begin{proof}[Proof of Theorem~\ref{T:Main} (a)]
  In this proof, $u(t,x)$ refers to $u(t,x;\mu)$. Fix $\tau>0$ and let $t_0 = \tau/2$. Throughout the
  proof, we have $t\ge \tau$. Let $v$ be the solution to~\eqref{E:restart} that is restarted from
  $t- t_0$. Then (see Figure~\ref{F:RestartedSHE} for an illustration)

  \begin{align}\label{E:RestaredSHE}
    v_t\left(s,x\right) \stackrel{\mathcal{L}}{=} u\left(s,x; u\left(t-t_0,\cdot;\mu\right)\right) \quad \text{for $s \ge 0$ and $t\ge\tau$}.
  \end{align}

\begin{figure}[htpb]
  \begin{center}
    \begin{tikzpicture}[scale = 1, transform shape]
      \tikzset{> = latex}
      \draw [->] (-0.2,0) --++ (8.5,0) node[right] {$s$};
      \draw[thick] (0,0) --++(0,0.2) --++(0,-0.4) node[below] {$0$};
      \draw (1,0) --++(0,0.2) --++(0,-0.4) node[below] {$t_0$};
      \draw (2,0) --++(0,0.2) --++(0,-0.4) node[below] {$\tau$};
      \draw (4,0) --++(0,0.2) --++(0,-0.4) node[below] {$t-t_0$};
      \draw (5,0) --++(0,0.2) --++(0,-0.4) node[below] {$t$};
      \draw (6.5,0) --++(0,0.2) --++(0,-0.4) node[below] {$t-t_0+s$}
            ++ (0,-0.7) node[below] (u1) {$u(t-t_0+s,x;\mu)$}
            ++ (0,-0.6) node[below] {$||$}
            ++ (0,-0.6) node[below] {$u\left(s,x;u(t-t_0,\cdot;\mu)\right)$};

      \draw [->] (3.8,1) --++ (4.5,0) node[right] {$s$};
      \draw[thick] (4,1) --++(0,-0.2) --++(0,0.4) node[above] {$0$};
      \draw (5,1) --++(0,-0.2) --++(0,0.4) node[above] {$t_0$};
      \draw (6.5,1) --++(0,-0.2) --++(0,0.4) node[above] {$s$} ++ (0,0.5) node[above] (v) {$v_t(s,x)$};

      \draw[dotted] (6.5,0.2) -- (6.5,0.8);
      \draw[dotted] (5,0.2) -- (5,0.8);
      \draw[dotted] (4,0.2) -- (4,0.8);
    \end{tikzpicture}
  \end{center}
  \caption{An illustration for the restarted SHE in~\eqref{E:RestaredSHE}.}
  \label{F:RestartedSHE}
\end{figure}
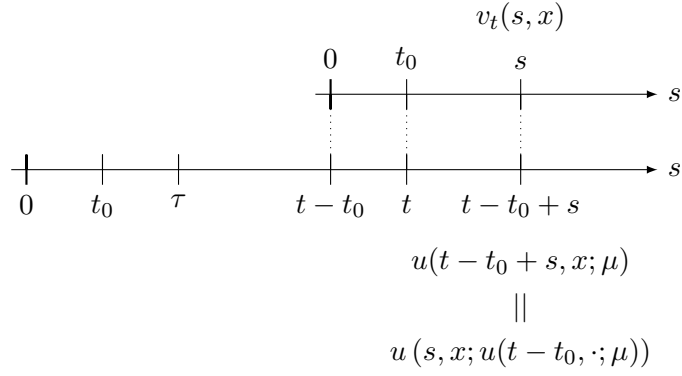

  According to Assumption (i), we can choose and fix some admissible weight function $\tilde{\rho}$
  such that~\eqref{E:rhorho} is satisfied. Hence, by Proposition~\ref{P:weight} below, the following
  set
  \begin{align*}
    \mathscr{K}_1 (\Lambda) \coloneqq \left\{\big(G(t_0,\cdot)*y(\cdot)\big)(x): \:\:\Norm{y}_{\tilde{\rho}} \le \Lambda \right\}
    \quad \text{with $\Lambda>0$}
  \end{align*}
  is relatively compact in $L_\rho^2 (\R^d)$.

  Assumption (iii), i.e.,~\eqref{E:U0finite}, implies that the interval $\left(64L_b^2 \Upsilon(0),
  1/2\right)$ is not empty. Moreover, Assumption (iv), i.e.,~\eqref{E:Dalang-a0}, guarantees that
  there exists a constant $\alpha$ in this interval, namely, $64L_b^2\Upsilon(0) <\alpha<1/2$, such
  that~\eqref{E:Dalang-a0} holds with $\alpha$ replaced by $2\alpha$, i.e.,
  $\Upsilon_{2\alpha}(0)<\infty$. Now we can apply part (3) of Lemma~\ref{L:HUA}, thanks
  to~\eqref{E:Dalang-00}, to see that $\Upsilon_{2\alpha}(0)<\infty$ if and only if~\eqref{E:H}
  holds. Therefore, both Lemmas~\ref{L:yineq} and~\ref{L:Factorize} (more precisely part (3) of
  Lemma~\ref{L:yineq}) are applicable. In particular, Lemma~\ref{L:Factorize} ensures that the
  following factorization is well-defined:
  \begin{align}\label{E:Fact}
    v(t_0,x) = \Big(G(t_0,\cdot) * u(t-t_0,\cdot)\Big)(x) +\dfrac{\sin(\alpha \pi)}{\pi} \left[F_\alpha Y_v\right](t_0,x).
  \end{align}
  Part (3) of Lemma~\ref{L:yineq} shows that for any $q$ in the following range,
  \begin{align}\label{E:Choice}
    64 L_b^2 \Upsilon(0) < \frac{1}{q} < \alpha <\frac{1}{2}
    \qquad \left(\text{or equivalently} \quad
    2< \frac{1}{\alpha} < q < \frac{1}{64 L_b^2 \Upsilon(0)}\:\right),
  \end{align}
  we can apply Proposition~\ref{P:F-Comp} to see that the set
  \begin{align*}
    \mathscr{K}_2 (\Lambda)\coloneqq
    \left\{(F_\alpha h)(t_0,x): \Norm{h}_{L^q\left((0,t_0);L^2_{\tilde{\rho}}\left(\R^d\right)\right)} \le \Lambda \right\},
    \quad \text{with $\Lambda>0$},
  \end{align*}
  is relatively compact in $L_\rho^2 (\R^d)$. Now for any $\Lambda>0$, define the set $\mathscr{K}
  (\Lambda)$ as
  \begin{align*}
    \mathscr{K} (\Lambda)
    & \coloneqq \mathscr{K}_1 (\Lambda) + \mathscr{K}_2 (\Lambda) \\
    & = \left\{\big(G(t_0,\cdot)*y(\cdot)\big)(x)+(F_\alpha h)(t_0, x): \:\Norm{y}_{\tilde{\rho}}
    \le \Lambda \quad \text{and}\quad\Norm{h}_{L^q\left((0,t_0);L^2_{\tilde{\rho}}\left(\R^d\right)\right)} \le \Lambda \right\}.
  \end{align*}

  Notice that from the factorization formula~\eqref{E:Fact},
  \begin{align*}
    \mathbb{P} \left[ v(t_0,\cdot) \not \in \mathscr{K}(\Lambda) \right ]
    & \le \mathbb{P}\left[ \left(\int_{0}^{t_0} \Norm{Y_v(s,\cdot)}_{\tilde{\rho}}^q \: \ud s\right)^{1/q} > \dfrac{\pi \Lambda}{\sin(\alpha \pi )} \right]
    + \mathbb{P}\left[ \Norm{u(t-t_0,\cdot)}_{\tilde{\rho}} > \Lambda \right] \\
    & \eqqcolon I_1+I_2.
  \end{align*}
  By Chebyshev's inequality and~\eqref{E:uInLrho}, we see that
  \begin{align*}
    I_2\le\frac{1}{\Lambda^2} \mathbb{E}\left(\Norm{u(t-t_0,\cdot)}_{\tilde{\rho}}^2\right)
    \le \frac{1}{\Lambda^2} \mathcal{G}_{\tilde{\rho}}\left(t-t_0;\mu^*\right).
  \end{align*}
  Because $\mathcal{G}_{\tilde{\rho}}(t;\mu^*)$ is a continuous function for $t>0$, and because it
  is also bounded at infinity, thanks to Assumption (ii) (see~\eqref{E:InitData}), we have that
  \begin{align}\label{E:GtBdd}
    \mathcal{G}_{\tilde{\rho}}\left(t-t_0;\mu^*\right)
    \le \sup_{t\ge \tau} \mathcal{G}_{\tilde{\rho}}\left(t-t_0;\mu^*\right)
    = \sup_{t\ge t_0} \mathcal{G}_{\tilde{\rho}}\left(t;\mu^*\right)<\infty.
  \end{align}
  Therefore, we can bound $I_2$ from above with a constant that does not depend on $t\ge \tau$,
  namely,
  \begin{align*}
    I_2 \le\frac{1}{\Lambda^2} \sup_{t\ge t_0} \mathcal{G}_{\tilde{\rho}}\left(t;\mu^*\right)<\infty.
  \end{align*}

  As for $I_1$, with the choice of $\alpha$ and $q$ in~\eqref{E:Choice}, one can apply Chebyshev's
  inequality and part (3) of Lemma~\ref{L:yineq} to see that
  \begin{align*}
    I_1 & \le \dfrac{\sin^q (\alpha \pi)}{\pi^q \Lambda^q}\E \int_{0}^{t_0} \Norm{Y_v(s,\cdot)}_{\tilde{\rho}}^q \: \ud s
          \le \dfrac{\sin^q (\alpha \pi)}{\pi^q \Lambda^q} \Theta\int_0^{t_0}
          \left(\mathcal{G}_{\tilde{\rho}}(s + t-t_0;\mu^*) \mathcal{H}_{\alpha}(s)\right)^{q/2} \ud s,
  \end{align*}
  where the constant $\Theta$ does not depend on $t$. As we have seen from above, since
  $\Upsilon_{2\alpha}(0)<\infty$, we can apply Lemma~\ref{L:HUA} to bound $\mathcal{H}_{\alpha}(s)$
  from above by the following finite bound: $\left(2\pi\right)^d \Gamma(1-2\alpha) \Upsilon_{2\alpha}(0)$.
  Hence, together with~\eqref{E:GtBdd}, we obtain the following upper bound for $I_1$ that is
  uniform in $t\ge \tau$:
  \begin{align*}
    I_1 \le \dfrac{\sin^q (\alpha \pi)\Theta (2\pi)^{dq/2} t_0}{\Gamma\left(1-2\alpha\right)^{q/2}\pi^q \Lambda^q} \left(\sup_{t\ge t_0} \mathcal{G}_{\tilde{\rho}}\left(t;\mu^*\right)\right)^{q/2} \Upsilon_{2\alpha}^{q/2}(0).
  \end{align*}
  Combining these two upper bounds, we see that
  \begin{align*}
    \mathbb{P} \left[ v(t_0,\cdot) \not \in \mathscr{K}(\Lambda) \right ]
    \le
    \dfrac{\sin^q (\alpha \pi)\Theta (2\pi)^{dq/2} t_0}{\Gamma\left(1-2\alpha\right)^{q/2}\pi^q \Lambda^q} \left(\sup_{t\ge t_0} \mathcal{G}_{\tilde{\rho}}\left(t;\mu^*\right)\right)^{q/2} \Upsilon_{2\alpha}^{q/2}(0)
    +
    \frac{1}{\Lambda^2} \sup_{t\ge t_0} \mathcal{G}_{\tilde{\rho}}\left(t;\mu^*\right)
    <\infty,
  \end{align*}
  with the upper bound holding uniformly for all $t\ge \tau$. Hence, for any $\epsilon>0$, by
  choosing $\Lambda > 0$ big enough such that
  \begin{align*}
    \dfrac{\sin^q (\alpha \pi)\Theta (2\pi)^{dq/2} t_0}{\Gamma\left(1-2\alpha\right)^{q/2}\pi^q \Lambda^q} \left(\sup_{t\ge t_0} \mathcal{G}_{\tilde{\rho}}\left(t;\mu^*\right)\right)^{q/2} \Upsilon_{2\alpha}^{q/2}(0)
    +
    \frac{1}{\Lambda^2} \sup_{t\ge t_0} \mathcal{G}_{\tilde{\rho}}\left(t;\mu^*\right)
    < \epsilon,
  \end{align*}
  we can ensure that
  \begin{align*}
      \mathbb{P}\left(u(t,\cdot) \in \mathscr{K}(\Lambda)\right)
    = \mathbb{P}\left(v(t_0,\cdot) \in \mathscr{K}(\Lambda)\right)
    \ge 1- \epsilon, \quad \text{for all $t\ge \tau$,}
  \end{align*}
  which proves part (a) of Theorem~\ref{T:Main}.
\end{proof}

\subsection{Proof of part (b) of Theorem~\ref{T:Main}}
\begin{proof}
  Fix an arbitrary $\tau>0$ and denote
  \begin{align*}
    U(T) \coloneqq \dfrac{1}{T}\int_{\tau}^{T+\tau} \mathcal{L}\left(u(t,\cdot;\mu)\right)\: \ud t, \quad T>0.
  \end{align*}
  We claim that the family of laws $U(T,\cdot)$ for $T > 0$ is tight in $L^2_{\rho}(\R^d)$. Indeed,
  for any $\epsilon\in (0,1)$, by part (a), there exists a compact set $\mathcal{K}\in L_\rho^2
  (\R^d)$ such that~\eqref{E:cpt} holds. This implies that
  \begin{align*}
    U(T)\left(\mathcal{K}\right)
    & = \frac{1}{T} \int_{\tau}^{T + \tau} \mathscr{L}(u(t,\cdot;\mu))\left(\mathcal{K}\right) \; \ud t
      \ge \frac{1}{T} \int_{\tau}^{T + \tau} (1-\epsilon) \; \ud t
      = 1-\epsilon, \quad \text{for all $T>0$}.
  \end{align*}

  Let $\{T_n\}_{n\in \mathbb{N}}$ be any deterministic sequence such that $T_n\uparrow \infty$.
  Since $\left\{U(T_n)\right\}_{n\ge 1}$ is a tight sequence of measures, then there exists a
  subsequence $\{U\left(T_{n_m}\right)\}_{m\ge 1}$ that converges weakly to a measure, $\eta$, on
  $L_\rho^2 (\R^d)$ (e.g. see~\cite[Theorem 5.1]{billingsley:99:convergence}). Then one can apply
  the Krylov-Bogoliubov existence theorem (see, e.g.,~\cite[Theorem
  11.7]{da-prato.zabczyk:14:stochastic}) to conclude that the measure $\eta$ is an invariant measure
  for $\mathscr{L}(u(t,\cdot;\mu))$, $t\ge \tau$. Finally, since $\tau$ can be arbitrarily close to
  zero, one can conclude part (b) of Theorem~\ref{T:Main}.
\end{proof}

\section{Discussion and examples}\label{S:Example}
\subsection{Invariant measures for SHE with a drift term}\label{SS:Drift}

In this part, we give a brief account of the case when the SHE has a drift term which plays a
crucial role in controlling the moments. The equation usually takes the following form:
\begin{align}\label{E:Drift}
  \left(\dfrac{\partial}{\partial t}-\dfrac{1}{2}\Delta\right) u(t,x)
   = g(x,u(t,x)) + b(x,u(t,x)) \dot W(t,x) \quad \text{$x\in \mathcal{O}$, $t>0$}.
\end{align}
The references in this part are far from being complete. The interested readers can find more
references from the references below.

The first case is when the drift term $g(\cdot)$ in~\eqref{E:Drift} satisfies certain dissipativity
conditions, which push the solution toward zero; see, e.g.,~\cite{assing.manthey:03:invariant,
brzezniak.gatarek:99:martingale, cerrai:01:second, cerrai:03:stochastic,
eckmann.hairer:01:invariant}. Such a ``negative'' drift term helps to cancel the growth of the
moments. Here is one example of such drift term: for some $m,\: k_i, \: c_i>0$ as $|u| \to \infty$:
\begin{equation}\label{E:dis}
  \begin{cases}
    g(u) \le -k_1|u|^m + k_2 & u > 0, \\
    g(u) \ge c_1|u|^m - c_2  & u < 0.
  \end{cases}
\end{equation}
In particular, Cerrai~\cite{cerrai:01:second, cerrai:03:stochastic} and Brze\'{z}niak and
G\c{a}tarek~\cite{brzezniak.gatarek:99:martingale} considered the case of a bounded spatial domain,
while Assing and Manthey~\cite{assing.manthey:03:invariant} and Eckmann and
Hairer~\cite{eckmann.hairer:01:invariant} considered the whole space $\R^d$. Note that Eckmann and
Hairer~\cite{eckmann.hairer:01:invariant} studied the additive noise case along with a bounded
initial condition.

Several works which do not require an added drift term with dissipativity as in \eqref{E:dis}
include Misiats \textit{et
al}~\cite{misiats.stanzhytskyi.ea:16:existence,misiats.stanzhytskyi.ea:20:invariant}. In the Theorem
1.2 of~\cite{misiats.stanzhytskyi.ea:20:invariant}, they provide a result guaranteeing the existence
of an invariant measure for the stochastic heat equation on the whole space $\R^d$. More precisely,
they allow for a drift term, $g(x,u):\R^d \times \R \to \R$, such that for all $x \in \R^d$ and
$u_1$, $u_2 \in \R$,
\begin{align*}
		|g(x,0)| \le \varphi(x) \qquad \text{and} \qquad |g(x,u_1) - g(x,u_2)| \le L\varphi(x)|u_1 - u_2|,
\end{align*}
for some $L>0$ where $\varphi(x)$ must decay fast enough such that $\varphi / \sqrt{\rho} \in
L^2 (\R^d) \cap L^{\infty}(\R^d)$ and $\rho$ is the admissible weight. Thus, $f \equiv 0$ is allowed.
However, they require the following condition on the diffusion term $b$:
\begin{align*}
  |b(x,u_1) - b(x,u_2)| \le L\varphi(x)|u_1 - u_2|,
\end{align*}
which excludes the parabolic Anderson model. Lastly, Theorem 1.2 \textit{ibid.} requires the initial
condition to be in $L^2 (\R^d)$, which excludes the two important cases, $u(0,x) = 1$ and
$u(0,\cdot) = \delta_0(\cdot)$. Our Theorem~\ref{T:Main} includes both of these initial conditions;
see Section~\ref{SS:Init}.

\subsection{The conditions for the spectral measures by Tessitore and Zabczyk}\label{SS:TessZabz}

Tessitore and Zabczyk~\cite{tessitore.zabczyk:98:invariant} established the existence of an
invariant measure for~\eqref{E:SHE} in $L^2_{\rho}(\R^d)$ under the assumptions that (1) there
exists a $\varphi \in L^2_{\rho}(\R^d) \cap L^2_{\widetilde\rho}(\R^d)$ where $\rho/\widetilde\rho
\in L^1 (\R^d)$ and the solution starting from $\varphi$ is bounded in probability in
$L^2_{\widetilde \rho}(\R^d)$ and (2) that the spectral density $\hat{f}$ satisfies
\begin{equation}\label{E:CorTZ}
\widehat{f} \in L^p (\R^d) \quad \text{where $\frac{d-2}{d} < \frac{1}{p}$};
\end{equation}
see Hypothesis 2.1 \textit{ibid.} However, as was illustrated in Theorem 3.3 \textit{ibid.}, in
order to apply this theorem to a specific initial condition in $L^2_{\rho}$ (or to have moments
uniformly bounded in time), the following additional assumptions were imposed:
\begin{equation}\label{E:TZspec}
  d\ge 3 \quad \text{and} \quad
  L_b^{-2}> \frac{\Gamma(d/2 -1)2^{d/2-2}}{(2\pi)^{2d}} \int_{\R^d} \left(\left| \mathcal{F}\big(\sqrt{\widehat{f}\:}\:\:\big) \right| * \left| \mathcal{F}\big(\sqrt{\widehat{f}\:}\:\:\big)\right|\right) (\xi) |\xi|^{2-d} \ud \xi ,
\end{equation}
where the convention of the Fourier transform is given in Remark~\ref{R:Fourier}. With these
assumptions, they were able to prove that~\eqref{E:SHE} starting from the constant one initial
condition satisfies~\eqref{E:BddMnt} and thus is bounded in probability, verifying the existence of
an invariant measure via the construction~\eqref{E:InvMeasForm}.


\begin{remark}\label{R:Fourier}
  The Fourier transform may be defined differently depending on how one handles the $2\pi$ constant.
  In this paper (as in~\cite{chen.kim:19:nonlinear,chen.huang:19:comparison}), we use the convention
  given in~\eqref{E:Fourier}. Hence, Plancherel's theorem takes the form of $\int_{\R^d} \psi(x)
  \phi(x)\ud x = (2\pi)^{-d} \int_{\R^d} \widehat{\psi}(\xi) \overline{\widehat{\phi}(\xi)}\ud \xi$.
  The authors in~\cite{tessitore.zabczyk:98:invariant} did not explicitly mention their convention
  of the Fourier transform. However, the proof of Theorem 3.3 \textit{ibid.} suggests that the
  following convention has been used:
	\begin{align*}
	\widehat{\phi}(\xi)      = \mathcal{F}\phi(\xi) := (2\pi)^{-d/2} \int_{\R^d} e^{-i x\cdot\xi} \phi(x)\ud x \quad \text{and} \quad
	\mathcal{F}^{-1}\psi(x) := (2\pi)^{-d/2} \int_{\R^d} e^{i x\cdot\xi} \psi(\xi)\ud \xi.
	\end{align*}
  Hence, Plancherel's theorem takes the form, $\int_{\R^d} \psi(x) \phi(x)\ud x = \int_{\R^d}
  \widehat{\psi}(\xi) \overline{\widehat{\phi}(\xi)}\ud \xi$, without the additional factor
  $(2\pi)^{-d}$. In particular, the spectral density $\gamma$ \textit{ibid.} corresponds to
  $\left(2\pi\right)^{-d/2}\widehat{f}$ in this paper. Our equation~\eqref{E:TZspec}, which is
  condition (3.4) \textit{ibid.}, takes into account this difference, therefore explaining the
  slightly different factor in front of the integral in~\eqref{E:TZspec} from that in (3.4)
  \textit{ibid.}
\end{remark}


In the following, we will focus on the second condition in~\eqref{E:TZspec}, which corresponds
to~\eqref{E:U0finite}. We claim that the latter is much easier to check than that of the former. The
square root and absolute value in~\eqref{E:TZspec} make their condition more restrictive. Indeed, if
$\mathcal{F}\big(\sqrt{\widehat{f}\:}\:\big)$ is nonnegative, then the absolute values
in~\eqref{E:TZspec} can be removed without ambiguity, which will reduce to our
condition~\eqref{E:U0finite} up to a constant factor. However, when $\widehat{f}$ is only
nonnegative and not strictly positive, finding the right square root of $\widehat{f}$ so that
$\mathcal{F}\big(\sqrt{\widehat{f}\:}\:\big)$ is nonnegative becomes tricky. This last point will be
illustrated by the examples below. We first set up the notation in Example~\ref{Ex:nonnegative}.
%
%


\begin{example}\label{Ex:nonnegative}
  Let $d = 1$ and $g(x) = \frac{1}{2}1_{[-1,1]}(x)$. Then we have that $ \widehat{g}(\xi) =
  \xi^{-1}\sin(\xi)$. Now set $f(x) = \left(g*g\right)(x) = 2^{-2}\max(2-|x|,0)$. It is clear that
  $f$ is nonnegative. It is also nonnegative-definite because $\widehat{f}(\xi) = \widehat{g}(\xi)^2
  = \xi^{-2}\sin^2 (\xi)\ge 0$.
\end{example}



Up to a constant, one may replace $\mathcal{F}$ by $\mathcal{F}^{-1}$. The following example shows
that $\mathcal{F}^{-1}\big(\sqrt{\widehat{f\:}}\:\:\big)(x)$ is signed and hence, the absolute value
make spoil the oscillatory structure.

\begin{example}\label{Ex:Finv}
  Suppose that $d = 1$, $f(x) = (2\pi)^{-1}x^{-2}\sin^2 (x)$ and $\widehat{f}(\xi) =
  2^{-2}\max\{2-|\xi|,0\}$. Then as explained in Example~\ref{Ex:nonnegative}, both $f$ and
  $\widehat{f}$ are non-negative and non-negative definite. We claim that
	\begin{align}\label{E:Claim}
	\text{$\mathcal{F}^{-1}\big(\sqrt{\widehat{f}\:}\:\:\big)(x)$ takes both positive and negative values.}
	\end{align}
	Indeed, for all $x\in\R$,
	\begin{align*}
    \mathcal{F}^{-1}\big(\sqrt{\widehat{f}\:}\:\:\big)(x)
    & = \frac{2}{2\pi} \int_0^2 2^{-1}\sqrt{2-\xi} \cos(x \xi) \: \ud \xi
      = \frac{1}{\pi} \int_0^{\sqrt{2}} \xi^2 \cos\left(x\left(2-\xi^2\right)\right) \: \ud \xi \\
    & = \frac{1}{\pi}\cos(2x) \int_0^{\sqrt{2}} \xi^2 \cos(x \xi^2) \: \ud \xi + \frac{1}{\pi} \sin(2x) \int_0^{\sqrt{2}} \xi^2 \sin(x \xi^2) \: \ud \xi \\
    & = - \frac{1}{\pi} \cos(2x) \int_0 ^{\sqrt{2}} \frac{\sin(x \xi^2)}{x} \: \ud \xi + \frac{1}{\pi} \sin(2x) \int_0 ^{\sqrt{2}} \frac{\cos(x \xi^2)}{x} \: \ud \xi,
  \end{align*}
  where we have applied the change of variables $\xi' = \sqrt{2 - \xi}$ and an integration by parts.
  By using the \textit{Fresnel integrals} (see, e.g.,~\cite[7.2 (iii)]{olver.lozier.ea:10:nist})
  \begin{align*}
    \mathcal{S}(z) = \int_0^{z} \sin\left(\frac{\pi t^2}{2}\right) \ud t \quad \text{and} \quad
    \mathcal{C}(z) = \int_0^z \cos\left(\frac{\pi t^2}{2}\right) \ud t,
	\end{align*}
	the above expression can be further simplified via the change of variables $\xi' = \sqrt{2|x|/\pi}
	\: \xi$ to
	\begin{align*}
	\mathcal{F}^{-1}\big(\sqrt{\widehat{f}\:}\:\:\big)(x)
	= (8\pi)^{-1/2} |x|^{-3/2} \left(-\cos(2|x|)\mathcal{S}\left(\frac{2\sqrt{|x|}}{\sqrt{\pi}}\right) + \sin(2|x|) \mathcal{C}\left( \frac{2 \sqrt{|x|}}{\sqrt{\pi}} \right) \right), \quad x \in \R.
	\end{align*}
  This proves the claim in \eqref{E:Claim}; see also Figure~\ref{Fig:fresnel} for some plots.
\end{example}

\begin{figure}[htbp]
	\centering
\begin{center}
\begin{tikzpicture}[scale = 0.8, transform shape]
\begin{axis}
[
		axis lines = left,
		xtick = {-8,-4,0,4,8},
		ytick = {0, 0.159, 0.300, 0.5},
		yticklabels = {$0$, $\frac{1}{2\pi}$, $\frac{2 \sqrt{2}}{3\pi}$, $\sfrac{1}{2}$},
		legend style = {at = {(1.0,0.9)},anchor = north},
		legend style = {draw = none, legend cell align = left},
		xmax = 10,
		ymax = 0.55,
		]

		\addplot[domain = -9:9,red,thick] coordinates {
			(-8.001,0.002432823286953949)
			(-7.901,0.0025438737134728592)
			(-7.800999999999999,0.0026079539418625837)
			(-7.701,0.0026213320142579343)
			(-7.600999999999999,0.0025821550165559203)
			(-7.5009999999999994,0.0024906227294374437)
			(-7.401,0.00234909155474499)
			(-7.300999999999999,0.0021621013460149494)
			(-7.201,0.0019363202666636112)
			(-7.100999999999999,0.001680405582745344)
			(-7.0009999999999994,0.0014047812821759637)
			(-6.901,0.0011213365012638092)
			(-6.800999999999999,0.0008430518273667448)
			(-6.701,0.0005835635245753582)
			(-6.600999999999999,0.00035667848826553106)
			(-6.5009999999999994,0.00017585516836534406)
			(-6.401,0.000053667711653983396)
			(-6.300999999999999,1.2720726207976823e-6)
			(-6.201,0.00002789375686314378)
			(-6.100999999999999,0.00014035713443167496)
			(-6.0009999999999994,0.0003426758564225485)
			(-5.901,0.0006357228116034111)
			(-5.800999999999999,0.0010169962789705433)
			(-5.700999999999999,0.001480496498286727)
			(-5.600999999999999,0.00201672384838161)
			(-5.5009999999999994,0.0026128062689099543)
			(-5.401,0.003252759582195788)
			(-5.300999999999999,0.003917880082258694)
			(-5.200999999999999,0.004587264287097178)
			(-5.100999999999999,0.005238446237433076)
			(-5.0009999999999994,0.0058481383164472895)
			(-4.901,0.006393057408427283)
			(-4.800999999999999,0.006850814454606368)
			(-4.700999999999999,0.007200842238934415)
			(-4.600999999999999,0.007425333669740383)
			(-4.5009999999999994,0.007510161023005499)
			(-4.401,0.007445745666168137)
			(-4.300999999999999,0.007227847750648969)
			(-4.200999999999999,0.006858246282234315)
			(-4.100999999999999,0.006345281857780303)
			(-4.0009999999999994,0.005704237171131473)
			(-3.900999999999999,0.004957534087349536)
			(-3.8009999999999993,0.004134730579773882)
			(-3.7009999999999996,0.0032723060091316676)
			(-3.600999999999999,0.0024132289632433244)
			(-3.5009999999999994,0.001606308014070545)
			(-3.400999999999999,0.0009053321133675948)
			(-3.3009999999999993,0.00036801375460334134)
			(-3.2009999999999987,0.00005475428626489342)
			(-3.100999999999999,0.000027256678591737287)
			(-3.0009999999999994,0.0003470164358733941)
			(-2.900999999999999,0.00107372603437282)
			(-2.8009999999999993,0.0022636320930730014)
			(-2.7009999999999987,0.003967887314026259)
			(-2.600999999999999,0.006230940950866248)
			(-2.5009999999999994,0.009089012098190241)
			(-2.400999999999999,0.012568689394647621)
			(-2.3009999999999993,0.01668569878780381)
			(-2.2009999999999987,0.021443877844817775)
			(-2.100999999999999,0.02683439077171419)
			(-2.0009999999999994,0.03283521292278903)
			(-1.900999999999999,0.03941090727057421)
			(-1.8009999999999993,0.04651270822547646)
			(-1.7009999999999987,0.05407892052761322)
			(-1.600999999999999,0.06203563288655326)
			(-1.5009999999999994,0.07029773783704628)
			(-1.400999999999999,0.07877024113784453)
			(-1.3009999999999993,0.08734983619528293)
			(-1.2009999999999987,0.09592671166698348)
			(-1.100999999999999,0.10438655380553113)
			(-1.0009999999999994,0.11261269943059007)
			(-0.9009999999999989,0.12048839083985655)
			(-0.8009999999999993,0.1278990806243089)
			(-0.7009999999999987,0.13473473234750544)
			(-0.6009999999999991,0.14089206245118208)
			(-0.5009999999999994,0.14627666958971366)
			(-0.4009999999999989,0.15080499986313772)
			(-0.30099999999999927,0.1544061000609773)
			(-0.20099999999999874,0.15702311595635107)
			(-0.10099999999999909,0.15861449877440564)
			(-0.0009999999999994458,0.1591548900402547)
			(0.0990000000000002,0.1586356629000162)
			(0.19900000000000162,0.15706510649154093)
			(0.29900000000000126,0.15446824878921397)
			(0.3990000000000009,0.15088632231941124)
			(0.49900000000000055,0.14637588599584814)
			(0.5990000000000002,0.1410076248166563)
			(0.6990000000000016,0.1348648570674698)
			(0.7990000000000013,0.12804178577404596)
			(0.8990000000000009,0.12064153725409628)
			(0.9990000000000006,0.11277403456977432)
			(1.0990000000000002,0.10455375735150653)
			(1.1990000000000016,0.0960974417591414)
			(1.2990000000000013,0.08752177521547783)
			(1.399000000000001,0.07894113997833622)
			(1.4990000000000006,0.070465457639247)
			(1.599000000000002,0.06219818331775087)
			(1.6990000000000016,0.054234493765639914)
			(1.7990000000000013,0.046659707944379146)
			(1.899000000000001,0.039547972060097115)
			(1.9990000000000006,0.03296123372682002)
			(2.099000000000002,0.026948522091409535)
			(2.1990000000000016,0.021545542616359)
			(2.2990000000000013,0.016774587008063356)
			(2.399000000000001,0.012644750725964176)
			(2.4990000000000006,0.009152442831703102)
			(2.599000000000002,0.006282165842513605)
			(2.6990000000000016,0.004007536924909025)
			(2.7990000000000013,0.00229251636350236)
			(2.899000000000001,0.0010928048962725856)
			(2.9990000000000006,0.00035736831968642875)
			(3.099000000000002,0.00003004579755997701)
			(3.1990000000000016,0.00005119758276101201)
			(3.2990000000000013,0.0003593483706586067)
			(3.399000000000001,0.0008927842019301681)
			(3.4990000000000006,0.0015910636407466753)
			(3.599000000000002,0.0023964077628005)
			(3.6990000000000016,0.0032549381598431354)
			(3.7990000000000013,0.004117737545113351)
			(3.899000000000001,0.004941713452235223)
			(3.9990000000000006,0.005690251772669383)
			(4.099000000000002,0.006333653282109798)
			(4.199000000000002,0.006849352673408142)
			(4.299000000000001,0.007221925758074765)
			(4.399000000000001,0.007442896247300105)
			(4.4990000000000006,0.007510358720626126)
			(4.599000000000002,0.007428438900773149)
			(4.699000000000002,0.007206616066218059)
			(4.799000000000001,0.006858935265808259)
			(4.899000000000001,0.006403138898051334)
			(4.9990000000000006,0.0058597481578703745)
			(5.099000000000002,0.0052511248416607)
			(5.199000000000002,0.004600543072479748)
			(5.299000000000001,0.003931298723252897)
			(5.399000000000001,0.003265881763437057)
			(5.4990000000000006,0.00262523354104581)
			(5.599000000000002,0.0020281072616718074)
			(5.699000000000002,0.001490545776069436)
			(5.799000000000001,0.0010254863827811726)
			(5.899000000000001,0.0006424978400591682)
			(5.9990000000000006,0.00034765030807791503)
			(6.099000000000002,0.00014351464796635594)
			(6.199000000000002,0.000029283517734928982)
			(6.299000000000001,1.0031415744061362e-6)
			(6.399000000000001,0.000051901585477685833)
			(6.4990000000000006,0.00017279692570908398)
			(6.599000000000002,0.00035256690231122526)
			(6.699000000000002,0.0005786605394831377)
			(6.799000000000001,0.0008376317967769518)
			(6.899000000000001,0.0011156755752281488)
			(6.9990000000000006,0.0013991473046539033)
			(7.099000000000002,0.0016750488265366993)
			(7.199000000000002,0.0019314652879756304)
			(7.299000000000001,0.0021579401886549533)
			(7.399000000000001,0.0023457784762423183)
			(7.4990000000000006,0.0024882705603940292)
			(7.599000000000002,0.0025808332023969515)
			(7.699000000000002,0.0026210663274543553)
			(7.799000000000001,0.0026087277946630874)
			(7.899000000000001,0.002545630948073522)
			(7.9990000000000006,0.0024354722734724237)
};
		\addlegendentry{$f(x) = (2\pi)^{-1} x^{-2}\sin^2 (x)$}
		\addplot[domain = -9:9,blue,thick,dashed] {0.25*max(2-abs(x),0)};
		\addlegendentry{$\widehat{f}(x) = 2^{-2}\max\left(2-|x|,0\right)$}

		\addplot[smooth, thick] coordinates {
			(-8.001,0.003817069187384621)
			(-7.901,0.004948077138048389)
			(-7.800999999999999,0.005957811046570557)
			(-7.701,0.006801808668466622)
			(-7.600999999999999,0.0074404986050931095)
			(-7.5009999999999994,0.007840943784860886)
			(-7.401,0.007978394690811678)
			(-7.300999999999999,0.007837586127350618)
			(-7.201,0.007413718638216873)
			(-7.100999999999999,0.006713075435127593)
			(-7.0009999999999994,0.005753237591198206)
			(-6.901,0.004562873928589717)
			(-6.800999999999999,0.003181097034880563)
			(-6.701,0.0016563926804693992)
			(-6.600999999999999,0.000045146020877752114)
			(-6.5009999999999994,-0.001590196215914143)
			(-6.401,-0.0031832734225176566)
			(-6.300999999999999,-0.004666201063584027)
			(-6.201,-0.0059721237551125395)
			(-6.100999999999999,-0.007037843288640616)
			(-6.0009999999999994,-0.007806427222745081)
			(-5.901,-0.008229700640346704)
			(-5.800999999999999,-0.008270524026900735)
			(-5.700999999999999,-0.00790476405612329)
			(-5.600999999999999,-0.007122871354141625)
			(-5.5009999999999994,-0.005930989901804334)
			(-5.401,-0.004351536364332931)
			(-5.300999999999999,-0.0024232039259575767)
			(-5.200999999999999,-0.00020036367237724425)
			(-5.100999999999999,0.002248143367067824)
			(-5.0009999999999994,0.004840809376747679)
			(-4.901,0.007485822677447826)
			(-4.800999999999999,0.010083934145729704)
			(-4.700999999999999,0.012531751213342676)
			(-4.600999999999999,0.014725365775906895)
			(-4.5009999999999994,0.01656420714088009)
			(-4.401,0.017954999093295454)
			(-4.300999999999999,0.018815691755267223)
			(-4.200999999999999,0.019079234553318915)
			(-4.100999999999999,0.01869705655185513)
			(-4.0009999999999994,0.017642124789483123)
			(-3.900999999999999,0.015911460050341306)
			(-3.8009999999999993,0.013528002550263853)
			(-3.7009999999999996,0.010541737007346405)
			(-3.600999999999999,0.007030007049935302)
			(-3.5009999999999994,0.0030969723178389673)
			(-3.400999999999999,-0.0011278127494024957)
			(-3.3009999999999993,-0.005491692343202579)
			(-3.2009999999999987,-0.009822041117901928)
			(-3.100999999999999,-0.013930173136600981)
			(-3.0009999999999994,-0.017615940315939035)
			(-2.900999999999999,-0.02067290843983226)
			(-2.8009999999999993,-0.022893977332707104)
			(-2.7009999999999987,-0.02407729388733329)
			(-2.600999999999999,-0.024032292978945078)
			(-2.5009999999999994,-0.02258569239072328)
			(-2.400999999999999,-0.019587264108344292)
			(-2.3009999999999993,-0.014915205938662947)
			(-2.2009999999999987,-0.008480944428455815)
			(-2.100999999999999,-0.00023321239116280357)
			(-2.0009999999999994,0.009838738289648315)
			(-1.900999999999999,0.02170290596748294)
			(-1.8009999999999993,0.0352833821081619)
			(-1.7009999999999987,0.05046033000543576)
			(-1.600999999999999,0.06707113001279474)
			(-1.5009999999999994,0.08491268888984531)
			(-1.400999999999999,0.10374487729202315)
			(-1.3009999999999993,0.1232950263870252)
			(-1.2009999999999987,0.14326338307207942)
			(-1.100999999999999,0.16332939426840914)
			(-1.0009999999999994,0.1831586651906907)
			(-0.9009999999999989,0.20241041514060143)
			(-0.8009999999999993,0.2207452379420889)
			(-0.7009999999999987,0.23783296316315466)
			(-0.6009999999999991,0.2533604091321415)
			(-0.5009999999999994,0.26703881965624665)
			(-0.4009999999999989,0.2786107833024671)
			(-0.30099999999999927,0.2878564469369221)
			(-0.20099999999999874,0.2945988535866673)
			(-0.10099999999999909,0.2987082580668381)
			(-0.0009999999999994458,0.3001053015279998)
			(0.0990000000000002,0.29876295731208347)
			(0.19900000000000162,0.2947071943387846)
			(0.29900000000000126,0.28801633967538115)
			(0.3990000000000009,0.27881915791927214)
			(0.49900000000000055,0.2672917004749534)
			(0.5990000000000002,0.25365301168263127)
			(0.6990000000000016,0.23815981005052952)
			(0.7990000000000013,0.22110029063378403)
			(0.8990000000000009,0.20278721807531874)
			(0.9990000000000006,0.1835504982981551)
			(1.0990000000000002,0.1637294297890097)
			(1.1990000000000016,0.14366484248546818)
			(1.2990000000000013,0.12369133330027068)
			(1.399000000000001,0.1041298022988072)
			(1.4990000000000006,0.08528048268668975)
			(1.599000000000002,0.06741664143717203)
			(1.6990000000000016,0.05077910613156309)
			(1.7990000000000013,0.03557174808422779)
			(1.899000000000001,0.02195802288578886)
			(1.9990000000000006,0.010058638029713459)
			(2.099000000000002,-0.00004961573703946895)
			(2.1990000000000016,-0.008333866267748716)
			(2.2990000000000013,-0.014804025099610152)
			(2.399000000000001,-0.019510577428420127)
			(2.4990000000000006,-0.022541387877197273)
			(2.599000000000002,-0.024017638964368225)
			(2.6990000000000016,-0.024089041207078127)
			(2.7990000000000013,-0.022928471252533566)
			(2.899000000000001,-0.020726206866197184)
			(2.9990000000000006,-0.017683934734407606)
			(3.099000000000002,-0.014008708744824414)
			(3.1990000000000016,-0.009907032746353366)
			(3.2990000000000013,-0.005579232983131978)
			(3.399000000000001,-0.001214271824182093)
			(3.4990000000000006,0.0030148634064358954)
			(3.599000000000002,0.006955084108966468)
			(3.6990000000000016,0.01047634847302212)
			(3.7990000000000013,0.013473972100250975)
			(3.899000000000001,0.01587006681208563)
			(3.9990000000000006,0.017614100997363203)
			(4.099000000000002,0.01868260197557976)
			(4.199000000000002,0.019078046526706048)
			(4.299000000000001,0.018827009192965684)
			(4.399000000000001,0.017977658505876553)
			(4.4990000000000006,0.016596708356281065)
			(4.599000000000002,0.01476594485514683)
			(4.699000000000002,0.012578457915475618)
			(4.799000000000001,0.010134711253107408)
			(4.899000000000001,0.007538584536305954)
			(4.9990000000000006,0.004893517136095283)
			(5.099000000000002,0.002298874604146539)
			(5.199000000000002,-0.00015335297806290054)
			(5.299000000000001,-0.0023814267769490752)
			(5.399000000000001,-0.0043162328187769305)
			(5.4990000000000006,-0.0059030967602637895)
			(5.599000000000002,-0.007103003959947052)
			(5.699000000000002,-0.007893210283957092)
			(5.799000000000001,-0.008267250120896385)
			(5.899000000000001,-0.008234368175256704)
			(5.9990000000000006,-0.00781842011188186)
			(6.099000000000002,-0.0070563034614563045)
			(6.199000000000002,-0.005995993883001305)
			(6.299000000000001,-0.0046942725326995964)
			(6.399000000000001,-0.0032142376401147967)
			(6.4990000000000006,-0.0016226972943811579)
			(6.599000000000002,0.000012459131697669528)
			(6.699000000000002,0.0016248164496881403)
			(6.799000000000001,0.0031518276795062636)
			(6.899000000000001,0.004536967292258031)
			(6.9990000000000006,0.00573157579874873)
			(7.099000000000002,0.006696341263233783)
			(7.199000000000002,0.0074023782331208285)
			(7.299000000000001,0.007831880379207368)
			(7.399000000000001,0.007978339253663794)
			(7.4990000000000006,0.007846337419529361)
			(7.599000000000002,0.007450939234636234)
			(7.699000000000002,0.006816716278844069)
			(7.799000000000001,0.005976456344665158)
			(7.899000000000001,0.004969614711354385)
			(7.9990000000000006,0.003840573784180821)
};
		\addlegendentry{$\mathcal{F}^{-1}\big(\sqrt{\widehat{f}\:}\:\:\big)(x)$}

\end{axis}

\end{tikzpicture}
\end{center}
	\caption{One example that $\mathcal{F}^{-1}\big(\sqrt{\widehat{f}\:}\:\:\big)(x)$ assumes both
		positive and negative values.}
	\label{Fig:fresnel}
\end{figure}
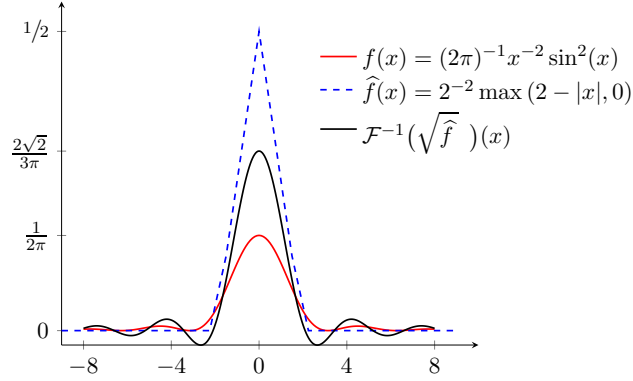

On the other hand, the next example shows that the spectral density given in Example~\ref{Ex:Finv}
can be easily handled by our condition -- $\Upsilon_{\alpha}(0)<\infty$, i.e.,~\eqref{E:Dalang-a0}.

\begin{example}\label{Ex:Finv-d}
  From the one-dimension case in Example~\ref{Ex:Finv}, one can construct a $d$-dimensional
  counterpart: Let $f_1$ and $\widehat{f}_1$ be the $f$ and $\widehat{f}$, respectively, in
  Example~\ref{Ex:Finv}. Then define
	\begin{align*}
    f_d(x) := \prod_{i=1}^d f_1(x_i), \quad x\in\R^d, \quad \text{and hence} \quad
    \widehat{f}_d(\xi) := \prod_{i=1}^d \widehat{f}_1(\xi_i), \quad \xi\in\R^d.
	\end{align*}
  It is straightforward to verify that
	\begin{align*}
    \Upsilon_\alpha(0)
    = C \int_{\R^d} \frac{\prod_{i = 1}^d\max\{2-|\xi_i|,0\}}{|\xi|^{2(1-\alpha)}} \ud \xi
    \le C \int_{|\xi|\le 2\sqrt{d}} \frac{2^d}{|\xi|^{2(1-\alpha)}} \ud \xi
    = C \int_{0}^{2\sqrt{d}} \frac{r^{d-1}}{r^{2(1-\alpha)}} \ud r.
	\end{align*}
  Hence, if $\alpha > 1-d /2$, then $\Upsilon_\alpha(0)<\infty$.
\end{example}


The next example illustrates the delicacy of choosing the right branches for the square root
in~\eqref{E:TZspec}.

\begin{example}\label{Ex:sqrt}
  Let $f$ and $g$ be given as Example~\ref{Ex:nonnegative}. In this case, $\widehat{f}(\cdot)$ is
  only nonnegative (not strictly positive) with infinitely many zeros. Hence, when taking the square
  root of $\widehat{f}(\xi)$ as in~\eqref{E:TZspec}, one needs to wisely select the correct positive
  and negative branches: (1) Clearly, the signed version $\sqrt{\widehat{f}(\xi)} =
  \xi^{-1}\sin(\xi)$ is preferable since its inverse Fourier transform can be easily computed, which
  is equal to $g(x)$. Moreover, because this inverse Fourier transform $g(x)$ is nonnegative, the
  absolute value signs in~\eqref{E:TZspec} do not pose any additional restrictions. (2) However, if
  one chooses the positive branches, namely, $\sqrt{\widehat{f}(\xi)} =
  \left|\xi^{-1}\sin(\xi)\right|$, then it is not clear how to compute its Fourier transform. In
  general, some bad choices of the positive/negative branches may make the conditions
  in~\eqref{E:TZspec} fail. For example, such choice may turn $\sqrt{\widehat{f}(\xi)}$ into a
  distribution, and then taking the absolute value of a distribution (unless it is a measure) may be
  problematic. Another issue that may arise is when $\sqrt{\widehat{f}(\xi)}$ is a well-defined
  function, taking on both positive and negative values and after taking the absolute value, the
  integral in~\eqref{E:TZspec} may blow up.
\end{example}

\subsection{Various initial conditions}\label{SS:Init}

In this part, we give some concrete examples of initial conditions.

\begin{example}[$L^\infty(\R^d)$ initial condition]\label{Ex:BddInit}
  We emphasize that if the initial condition, $\mu$, is deterministic and is such that $\mu(\ud x) =
  \varphi(x)\ud x$ with $\varphi\in L^\infty(\R^d)$, then all conditions related to
  $\mathcal{G}_\rho(\cdot)$ in both Theorems~\ref{T:Lrho} and~\ref{T:Main} are trivially satisfied.
  To be more precise, both Conditions~\eqref{E:G} and~\eqref{E:InitData} hold because
  \begin{align*}
    \mathcal{G}_\rho(t;|\varphi|) \le \Norm{\varphi}_{L^\infty(\R^d)}^2 \Norm{\rho}_{L^1 (\R^d)}< \infty
    \quad \text{uniformly for all $t\ge 0$.}
  \end{align*}
\end{example}

\begin{example}[Delta initial condition]\label{Ex:delta}
  In this example, we study the case when the initial condition, $\mu$, is the Dirac delta measure
  at zero, namely, $\delta_0$. Let $\rho$ be a nonnegative $L^1 (\R^d)$ function. Since
  \begin{align*}
    \mathcal{G}_\rho(t;|\delta_0|)
    = \int_{\R^d} G(t,x)^2 \rho(x) \ud x
    \le G(t,0)^2 \Norm{\rho}_{L^1 (\R^d)},
    \quad \text{for all $t>0$,}
  \end{align*}
  we see that both conditions~\eqref{E:G} and~\eqref{E:InitData} are satisfied. In particular,
  $\limsup_{t>0} \mathcal{G}_\rho(t;\delta_0) = 0$.
\end{example}

\begin{example}[More initial conditions not in $L_\rho^2 (\R^d)$]\label{Ex:Riesz}
  In this example, we study the case when $\mu(\ud x) = |x|^{-\alpha}\ud x$ for some $\alpha\in
  (0,d)$. It is clear that when $\alpha\in (d/2,d)$, $\mu\not\in L_\rho^2 (\R^d)$. However, in this
  case, we have
  \begin{align*}
    J_0(t,x) = \left(G(t,\cdot)* |\cdot|^{-\alpha}\right)(x)
    \le \left(G(t,\cdot)* |\cdot|^{-\alpha}\right)(0).
  \end{align*}
  On the other hand,
  \begin{align*}
    \left(G(t,\cdot)* |\cdot|^{-\alpha}\right)(0)
    = \frac{2\pi^{d/2}}{\Gamma(d/2)}\times (2\pi t)^{-d/2} \int_0^\infty e^{-\frac{r^2}{2t}} r^{-\alpha + d-1} \ud r = C_* t^{-\alpha/2},
  \end{align*}
  with $C_* = 2^{-\alpha/2}\Gamma\left((d-\alpha)/2\right)/\Gamma(d/2)$, which implies that
  \begin{align*}
    \mathcal{G}_\rho\left(t;|\cdot|^{-\alpha}\right)
    \le \int_{\R^d} J_0^2 (t,0) \rho(x) \ud x
    = C_*^2 t^{-\alpha} \Norm{\rho}_{L^1 (\R^d)},
    \quad \text{for all $t>0$.}
  \end{align*}
  Therefore, we see that both conditions~\eqref{E:G} and~\eqref{E:InitData} are satisfied.
\end{example}

The following proposition shows that for initial conditions with unbounded tails,
condition~\eqref{E:G} may hold while condition~\eqref{E:InitData} may fail.

\begin{proposition}\label{P:growtail}
  Suppose that $\rho(x) = \exp(-|x|)$, which is an admissible weight function. Let the initial
  condition $\mu$ be given as $\mu(\ud x) = |x|^\alpha \ud x$ with $\alpha>0$. Then for some
  constants $C, C'>0$ that depend on $d$ and $\alpha$, it holds that
  \begin{align}\label{E:growtail}
    C' (1+t^\alpha) \le \mathcal{G}_\rho(t;|\mu|) \le C (1+t^\alpha), \qquad \text{for all $t>0$}.
  \end{align}
  In particular, this implies that condition~\eqref{E:G} is satisfied, but condition~\eqref{E:InitData} fails.
\end{proposition}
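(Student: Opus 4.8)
The plan is to reduce everything to a sharp two‑sided estimate for the heat flow $J_0(t,x;|\mu|)$ and then integrate against $\rho$. Writing $Z$ for a standard $d$‑dimensional Gaussian vector and using the scaling of the heat kernel, one has the probabilistic representation
\[
  J_0(t,x;|\mu|) = \int_{\R^d} G(t,x-y)|y|^\alpha \, \ud y = \E\left(|x+\sqrt{t}\,Z|^\alpha\right) = t^{\alpha/2}\,\E\left(\left|t^{-1/2}x+Z\right|^\alpha\right).
\]
The first claim I would establish is that there are constants $0<c_0\le c_1<\infty$, depending only on $d$ and $\alpha$, with
\[
  c_0\left(|x|^\alpha+t^{\alpha/2}\right)\le J_0(t,x;|\mu|)\le c_1\left(|x|^\alpha+t^{\alpha/2}\right), \qquad \text{for all } t>0,\ x\in\R^d.
\]
The upper bound follows at once from the triangle inequality and the elementary inequality $(a+b)^\alpha\le 2^\alpha(a^\alpha+b^\alpha)$ for $a,b\ge 0$, since then $\E|x+\sqrt t Z|^\alpha\le 2^\alpha(|x|^\alpha+t^{\alpha/2}\E|Z|^\alpha)$ and $\E|Z|^\alpha<\infty$.

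For the matching lower bound I would split into two regimes. If $|x|\ge 2\sqrt t$, restrict the expectation to the event $\{|Z|\le 1\}$, which has a fixed positive probability $p_d>0$; on it $|x+\sqrt t Z|\ge |x|-\sqrt t\ge |x|/2$, hence $J_0(t,x;|\mu|)\ge p_d 2^{-\alpha}|x|^\alpha$, and because $|x|\ge 2\sqrt t$ this already dominates a fixed multiple of $|x|^\alpha+t^{\alpha/2}$. If instead $|x|<2\sqrt t$, restrict to $\{|Z|\ge 3\}$, which has a fixed positive probability $q_d>0$; there $|x+\sqrt t Z|\ge 3\sqrt t-|x|\ge \sqrt t$, so $J_0(t,x;|\mu|)\ge q_d t^{\alpha/2}$, and since $|x|^\alpha<2^\alpha t^{\alpha/2}$ this dominates a fixed multiple of $|x|^\alpha+t^{\alpha/2}$ as well. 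Producing this lower bound \emph{uniformly} over both the small‑time and the large‑time regimes is the only step that requires care; everything else is bookkeeping.

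With the two‑sided bound for $J_0$ in hand, I square it and integrate against $\rho(x)=\e^{-|x|}$, which is an admissible weight (see Section~\ref{SS:Weight}). Using $(a+b)^2\le 2(a^2+b^2)$ for the upper estimate and $(a+b)^2\ge a^2+b^2$ for the lower one, together with the finite moments $M_\beta:=\int_{\R^d}|x|^\beta\e^{-|x|}\,\ud x<\infty$, one obtains
\[
  c_0^2\left(M_{2\alpha}+M_0\,t^\alpha\right)\le \mathcal{G}_\rho(t;|\mu|)=\int_{\R^d}J_0^2(t,x;|\mu|)\,\e^{-|x|}\,\ud x\le 2c_1^2\left(M_{2\alpha}+M_0\,t^\alpha\right),
\]
which is exactly~\eqref{E:growtail} with $C'=c_0^2\min(M_{2\alpha},M_0)$ and $C=2c_1^2\max(M_{2\alpha},M_0)$. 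Finally, the upper bound shows $\mathcal{G}_\rho(t;|\mu|)<\infty$ for every $t>0$, i.e.\ condition~\eqref{E:G} holds, while the lower bound gives $\mathcal{G}_\rho(t;|\mu|)\ge C' t^\alpha\to\infty$, so that $\limsup_{t>0}\mathcal{G}_\rho(t;|\mu|)=\infty$ and condition~\eqref{E:InitData} fails (for the choice $\tilde{\rho}=\rho$).
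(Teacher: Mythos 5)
Your argument is correct, and while it shares the paper's overall architecture (a pointwise two-sided bound on $J_0(t,x;|\mu|)$ followed by integration against the weight), it takes a genuinely different and more elementary route at the one step that actually requires work, namely the lower bound. The paper first rescales to reduce to $\int_{\R^d}G(1,z)|\xi-z|^\alpha\,\ud z$, bounds this below by reducing to a one-dimensional integral $\int_1^2\big||\xi|-x\big|^\alpha\,\ud x$, and then analyzes the resulting piecewise function $\psi(r)=\mathrm{sgn}(2-r)|r-2|^{\alpha+1}+\mathrm{sgn}(r-1)|r-1|^{\alpha+1}$ in three cases, invoking concavity/super-additivity of $x^{2\alpha}$ and monotonicity arguments; the final integration is then carried out explicitly in terms of Gamma functions. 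You instead write $J_0(t,x;|\mu|)=\E\left(|x+\sqrt{t}\,Z|^\alpha\right)$ and obtain the uniform bound $c_0\left(|x|^\alpha+t^{\alpha/2}\right)\le J_0\le c_1\left(|x|^\alpha+t^{\alpha/2}\right)$ by restricting the expectation to the events $\{|Z|\le 1\}$ and $\{|Z|\ge 3\}$ in the regimes $|x|\ge 2\sqrt{t}$ and $|x|<2\sqrt{t}$ respectively; this is shorter, avoids the case analysis on $\psi$, and makes the constants' dependence on $d$ and $\alpha$ transparent. Your version is equivalent in content (after the substitution $x=\sqrt{t}\,\xi$ your bound is exactly the paper's $\E|\xi+Z|^\alpha\asymp 1+|\xi|^\alpha$), but the probabilistic restriction argument buys simplicity, whereas the paper's computation yields explicit Gamma-function constants. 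All the remaining steps --- the elementary inequalities for $(a+b)^\alpha$ and $(a+b)^2$, the finiteness of $M_{2\alpha}$ and $M_0$, and the conclusion that \eqref{E:G} holds while \eqref{E:InitData} fails because $\mathcal{G}_\rho(t;|\mu|)\ge C't^\alpha\to\infty$ --- are sound.
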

\begin{proof}
  Notice that by scaling arguments, $\mathcal{G}_\rho(t;|\mu|)$ is equal to
  \begin{align*}
    &   \int_{\R^d} \left(\int_{\R^d}G(t,x-y) |y|^\alpha \ud y\right)^2 e^{-|x|} \ud x
      = \int_{\R^d}t^{\alpha+d/2} \left(\int_{\R^d}G\left(1,\xi-z\right) |z|^\alpha \ud z\right)^2 e^{-\sqrt{t}|\xi|} \ud \xi.
  \end{align*}
  In the following, let $C_d$, $C_\alpha$, $C_\alpha'$, $C_{\alpha, d}$ and $C'_{\alpha, d}$ be
  generic constants that may depend on $\alpha $ and $d$ and may change their value at each
  appearance. \medskip

  \noindent\textit{Upper bound:~} Because
  \begin{align*}
    \int_{\R^d}G\left(1,z\right) |\xi-z|^\alpha \ud z
    \le C_\alpha \int_{\R^d}G\left(1,z\right) \left(|\xi|^\alpha + |z|^\alpha\right) \ud z
    \le C_\alpha' \left(1+|\xi|^\alpha\right),
  \end{align*}
  we see that
  \begin{align*}
    \mathcal{G}_\rho(t;\mu)
    & \le C_\alpha \int_{\R^d}t^{\alpha+d/2} \left(1+|\xi|^{2\alpha}\right) e^{-\sqrt{t}|\xi|} \ud \xi
      = C_{\alpha, d} \left(t^\alpha \Gamma(d) + \Gamma(d+2\alpha)\right)
      = C'_{\alpha, d} (1+ t^\alpha)<\infty,
  \end{align*}
  which proves the upper bound in~\eqref{E:growtail}. \medskip

  \noindent\textit{Lower bound:~} Now we prove the lower bound in~\eqref{E:growtail}. Indeed,
  \begin{align*}
    \MoveEqLeft \int_{\R^d}G\left(1,z\right) |\xi-z|^\alpha \ud z
      \ge \int_{\R^d}G\left(1,z\right) \big| |\xi| - |z| \big|^\alpha \ud z \\
    & \ge {C_d} \int_0^\infty \big| |\xi| - x \big|^\alpha e^{-\frac{x^2}{2}}x^{d-1}\ud x
      \ge C_d \int_1^2 \big| |\xi| - x \big|^\alpha \ud x
      = \frac{C_d}{1+\alpha} \psi\left(|\xi|\right),
  \end{align*}
  where, by considering three cases, we have
  \begin{align*}
    \psi(r)
    & =
    \begin{cases}
      (2-r)^{\alpha+1} - (1-r)^{\alpha+1} & \text{if $0<r<1$},        \\
      (2-r)^{\alpha+1} + (r-1)^{\alpha+1} & \text{if $1\le r \le 2$}, \\
      (r-1)^{\alpha+1} - (r-2)^{\alpha+1} & \text{if $r > 2$},
    \end{cases}
  \end{align*}
  which is equal to $\text{sgn}(2-r)|r-2|^{\alpha+1} + \text{sgn}(r-1)|r-1|^{\alpha+1}$.
  We claim that
  \begin{align}\label{E:psi}
    \inf_{r\ge 0} \frac{\psi(r)}{\sqrt{1+r^{2\alpha}}}>0.
  \end{align}
  With~\eqref{E:psi}, we have that
  \begin{align*}
    \int_{\R^d}G\left(1,z\right) |\xi-z|^\alpha \ud z \ge C_{\alpha, d} \sqrt{1+|\xi|^{2\alpha}}.
  \end{align*}
  Then, by the same arguments as above for the upper bound, we obtain the lower bound
  in~\eqref{E:growtail}. It remains to prove~\eqref{E:psi}, which will be proved in three cases.

  When $r>2$, we see that
  \begin{align*}
    \frac{\psi(r)}{\sqrt{1+r^{2\alpha}}}
    & \ge C_\alpha \frac{(r-1)^{\alpha+1}-(r-2)^{\alpha+1}}{(1+r)^\alpha}
      \ge C_\alpha \frac{(r-1)^{\alpha}(r-1)-(r-1)^{\alpha}(r-2)}{(1+r)^\alpha} \\
    & = C_\alpha \left(\frac{r-1}{1+r}\right)^\alpha
      = C_\alpha \left(1-\frac{2}{1+r}\right)^\alpha \ge C_\alpha \left(1-\frac{2}{3}\right)^\alpha.
  \end{align*}
  Note that in the first inequality above, we have considered two cases: $2\alpha \ge 1$ and $2
  \alpha < 1$. When $2\alpha < 1$, we have used the concavity of $x^{2\alpha}$, namely, $ (1 +
  r^{2\alpha})^{2\alpha} / 2 \le ((1+r)/2)^{2\alpha}$; when $2\alpha \ge 1$, we have used the
  super-additivity of $x^{2\alpha}$: namely, that for $a,b>0$ that $ (a+b)^{2\alpha}\ge a^{2\alpha}
  + b^{2\alpha}$. Therefore, $\inf_{r>2} \frac{\psi(r)}{\sqrt{1+r^{2\alpha}}} >0$.

  When $r\in(1,2]$, elementary calculations show that the minimum of $\psi(r)$ is achieved at
  $r = 3/2$. Hence, $\inf_{r\in(1,2]} \frac{\psi(r)}{\sqrt{1+r^{2\alpha}}} \ge \frac{\psi(3/2)
}{\sqrt{1+4^\alpha}}>0$.

  Similarly, when $r\in (0,1]$, by differentiation, one finds that the function $\psi(r)$ is
  nonincreasing. Hence, the minimum is achieved at $r = 1$, $\inf_{r\in(0,1]}
  \frac{\psi(r)}{\sqrt{1+r^{2\alpha}}} \ge \frac{\psi(1)}{\sqrt{2}}>0$.

  Combing the above three cases proves~\eqref{E:psi} and hence, Proposition~\ref{P:growtail}.
\end{proof}

\subsection{Bessel and other related kernels}\label{SS:Bassel}

In this part, we will make some explicit computations for Bessel and related kernels.

\begin{example}[Bessel kernel]\label{Ex:Bessel}
  Let $f_s$ denote the Bessel kernel with a strictly positive parameter $s>0$. It is known that
  (see, e.g., Section 1.2.2 of~\cite{grafakos:14:modern})
  \begin{enumerate}
    \item $f_s(x)>0$ for all $x\in\R^d$ and $ \Norm{f_s}_{L^1 (\R^d)} = 1$;
    \item there exists a constant $C(s,d)>0$ such that $f_s(x) \le C(s,d) \exp(-|x|/2)$ for $|x|\ge
      2$.
    \item there exists a constant $c(s,d)>0$ such that
      \begin{gather*}
        \frac{1}{c(s,d)} \le \frac{f_s(x)}{H_s(x)} \le c(s,d) \quad \text{for} \quad |x| \le 2,
        \shortintertext{with}
        H_s(x) =
        \begin{cases}
          |x|^{s-d} + 1 + O(|x|^{s-d+2}) & \text{for $0<s<d$,} \\
          \log\left(\frac{2}{|x|}\right) + 1 + O(|x|^{2}) & \text{for $s = d$,} \\
           1 + O(|x|^{s-d}) & \text{for $s>d$;}
        \end{cases}
        \end{gather*}
      \item the Fourier transform of $f_s$ is strictly positive:
        \begin{equation}\label{E:FB}
          \mathcal{F}f_s(\xi) = \frac{1}{(1+|\xi|^2)^{s/2}}.
        \end{equation}
  \end{enumerate}
  Note that one can use~\eqref{E:FB} as the definition of the Bessel kernel. Properties 1 and 4
  ensure that $f_s$ is a nonnegative and nonnegative-definite tempered measure for all $s>0$.
\end{example}

\begin{example}[Mat\'ern class of correlation functions]
  The \textit{Mat\'ern class of correlation functions} has been widely used in spatial statistics;
  one may check the recent work~\cite{loh.sun.ea:21:on} for references. Following Section 2.10
  of~\cite{stein:99:interpolation}, this class of correlation functions is given by
  \begin{align}\label{E:Matern}
    K(x) = \phi \cdot (\alpha |x|)^\nu \mathcal{K}_\nu\left(\alpha |x|\right), \quad \text{for
    $x\in\R^d$ with $\phi>0$, $\alpha>0$, $\nu >0$},
  \end{align}
  where $\mathcal{K}_\nu(\cdot)$ is the modified Bessel function of second type, and $\alpha$ and
  $\nu$ refer to the \textit{scaling and smoothness parameters}, respectively. From the inversion
  formula (see p. 46 \textit{ibid.}), one sees that
  \begin{align*}
    \mathcal{F}K(\xi) = (2\pi)^{d} \mathcal{F}^{-1} K(\xi) = (2\pi)^d f(|\xi|)
    \quad \text{with} \quad
    f(\xi) = \frac{2^{\nu-1} \phi \Gamma\left(\nu +d/2\right)\alpha^{2\nu}}{\pi^{d/2}\left(\alpha^2+|\xi|^2\right)^{\nu+d/2}}, \quad \xi\in\R^d.
  \end{align*}
  Comparing the above expression with~\eqref{E:FB}, we see that the class of Bessel kernels $f_s$,
  with $s>d-2$ and $d\ge 3$, includes the Mat\'ern class~\eqref{E:Matern} as a special case under
  the following choice of parameters:
  \begin{align*}
    \alpha = 1,\quad \nu = (s-d)/2,\quad \text{and} \quad \phi = 2^{(2-d-s)/2} \pi^{-d/2} \Gamma(s /2)^{-1}.
  \end{align*}
  Note that the requirement of the smoothness parameter $\nu>0$ for the Mat\'ern class corresponds
  to the case of the Bessel kernel with $s>d$.
\end{example}

The following proposition shows what conditions~\eqref{E:Dalang-a0},~\eqref{E:Dalang-00},
and~\eqref{E:Dalang-H} reduce to for the Bessel kernel as the correlation function in terms of its
parameters.

\begin{proposition}[Bessel kernel as correlation function]\label{P:Bessel}
  If the correlation function $f$ is given by the Bessel kernel $f_s(\cdot)$ with $s>0$ defined in
  Example~\ref{Ex:Bessel}, then
  \begin{align}\label{E:UpsionB}
    \Upsilon_{\alpha}(0) = \frac{\Gamma\left(\frac{d}{2} -1 + \alpha\right) \Gamma\left(\frac{s-d}{2} + 1 - \alpha\right)}{2^d \pi^{d/2} \: \Gamma(d/2)\:\Gamma\left(s/2\right)}
    \qquad \text{for all $s>d-2(1-\alpha)>0$,}
  \end{align}
   and in particular when $\alpha = 0$,~\eqref{E:UpsionB} simplifies to the following:
  \begin{equation}\label{E:Upsilon_0}
     \Upsilon(0) = \frac{\Gamma\left(\frac{2+s-d}{2}\right)}{2^{d-1} \pi^{d/2} (d-2)\Gamma(s/2)}
     \qquad \text{for all $s>d-2>0$.}
  \end{equation}
  In addition,
  \begin{gather}\label{E:HaB}
    \mathcal{H}_\alpha(t) <\infty \quad \forall t>0 \quad \Longleftrightarrow \quad
    0<\alpha<\frac{1}{2} - \frac{(d-s)_+}{4}\quad \text{and} \quad s>d-2>0,
  \end{gather}
  where $a_+ \coloneqq \max(a,0)$. Moreover, for $\alpha\in(0,1/2)$, we have the following asymptotic
  behavior of
  $\mathcal{H}_\alpha(t)$ at $t\to 0$: 
  \begin{align}\label{E:HaBS}
    \hspace{-1em}
    \mathcal{H}_\alpha(t) = \left\{
      \begin{array}{lcl}
        \begin{aligned}
          \dfrac{\pi^{d/2} \Gamma\left((d-s)/2\right)}{\left((s-d)/2+1-2\alpha\right)\Gamma(d/2)}  t^{(s-d)/2+1-2\alpha}                  \\[1em]
          \quad - \dfrac{\pi^{d/2}\Gamma\left((s-d)/2\right)}{(1-2\alpha)\Gamma(s/2)}              t^{1-2\alpha}                          \\[1em]
          +                                                                                        O\left(t^{(s-d)/2+2(1-\alpha)}\right), \\[1.2em]
        \end{aligned}
        &
        \begin{gathered}
         d-2<s<d    \\
         \text{and} \\
         \alpha < \frac{1}{2} -\frac{1}{4}(d-s)
       \end{gathered} & \text{\em(\ref{E:HaBS}-a)} \\
        \begin{aligned}
            \frac{\pi ^{d/2}}{(1-2 \alpha) \Gamma \left(d/2\right)}t^{1-2\alpha}\log\left(\frac{1}{t}\right)                                    \\[1em]
           +\frac{\pi ^{d/2} \left(1- (1-2 \alpha) \left[\psi(d/2)+2\gamma\right]\right)}{(1-2 \alpha)^2 \Gamma \left(d/2\right)}t^{1-2 \alpha} \\[1em]
           +O\left(t^2\log(t)\right),                                                                                                           \\[1.2em]
        \end{aligned}                                                                                                           & s=d     & \text{\em(\ref{E:HaBS}-b)} \\ [1em]
        \dfrac{\pi^{d/2}\Gamma\left((s-d)/2\right)}{(1-2\alpha)\Gamma(s/2)}t^{1-2\alpha} + O\left(t^{(s-d)/2+1-2\alpha}\right), & d<s<d+2 & \text{\em(\ref{E:HaBS}-c)} \\ [1.5em]
        \dfrac{\pi^{d/2}}{(1-2\alpha)\Gamma\left(d/2+1\right)}t^{1-2\alpha}  + O\left(t^{2(1-\alpha)}\log(t)\right),            & s=d+2   & \text{\em(\ref{E:HaBS}-d)} \\ [1.5em]
        \dfrac{\pi^{d/2}}{(1-2\alpha)\Gamma\left(d/2+1\right)}t^{1-2\alpha}  + O\left(t^{2(1-\alpha)}\right),                   & s>d+2   & \text{\em(\ref{E:HaBS}-e)} \\ [1.5em]
      \end{array}
      \right.
  \end{align}
  where $\psi(x) = \frac{\ud}{\ud x}\log\Gamma(x)$ refers to the \emph{digamma function} and
  $\gamma\approx 0.57721$ to \emph{Euler's constant}; see, e.g., 5.2.2 and 5.2.3 on p. 136
  of~\cite{olver.lozier.ea:10:nist}.
\end{proposition}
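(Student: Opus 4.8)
The plan is to reduce every quantity to a one-dimensional integral and then recognise Beta integrals and small-argument expansions of a confluent hypergeometric function. Throughout I use $\widehat{f_s}(\xi)=(1+|\xi|^2)^{-s/2}$ from item~4 of Example~\ref{Ex:Bessel}, the surface measure $\sigma(\mathbb{S}^{d-1})=2\pi^{d/2}/\Gamma(d/2)$, and the Gaussian identity $\int_{\R^d}e^{-a|\xi|^2}\ud\xi=(\pi/a)^{d/2}$ for $a>0$. For $\Upsilon_\alpha(0)$, passing to polar coordinates and substituting $v=|\xi|^2$ turns $(2\pi)^d\Upsilon_\alpha(0)$ into $\frac{\pi^{d/2}}{\Gamma(d/2)}\int_0^\infty v^{a-1}(1+v)^{-s/2}\ud v$ with $a=d/2-1+\alpha$, i.e. the Beta integral $\frac{\pi^{d/2}}{\Gamma(d/2)}\cdot\frac{\Gamma(a)\Gamma(s/2-a)}{\Gamma(s/2)}$, which is finite exactly when $a>0$ and $s/2-a>0$, that is, $s>d-2(1-\alpha)>0$. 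Collecting constants gives~\eqref{E:UpsionB}; setting $\alpha=0$ and using $\Gamma(d/2)=(d/2-1)\Gamma(d/2-1)$ gives~\eqref{E:Upsilon_0}.

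For $\mathcal{H}_\alpha$, write $\mathcal{H}_\alpha(t)=\int_0^t r^{-2\alpha}g(r)\,\ud r$ with $g(r):=\int_{\R^d}e^{-r|\xi|^2}\widehat{f_s}(\xi)\,\ud\xi$. Inserting the subordination identity $(1+|\xi|^2)^{-s/2}=\Gamma(s/2)^{-1}\int_0^\infty u^{s/2-1}e^{-u(1+|\xi|^2)}\ud u$, applying Fubini and the Gaussian identity yields
\[ g(r)=\frac{\pi^{d/2}}{\Gamma(s/2)}\int_0^\infty\frac{u^{s/2-1}e^{-u}}{(u+r)^{d/2}}\,\ud u , \]
which is $\pi^{d/2}$ times a Tricomi confluent hypergeometric function $U(d/2,\,d/2+1-s/2,\,r)$, though the expansion below can equally be obtained directly. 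From this representation, $g(0)=\pi^{d/2}\Gamma((s-d)/2)/\Gamma(s/2)<\infty$ when $s>d$, while $g(r)$ blows up like $r^{(s-d)/2}$ when $s<d$ and like $\log(1/r)$ when $s=d$. Consequently $r^{-2\alpha}g(r)$ is integrable at $r=0$ iff $2\alpha+\tfrac12(d-s)_+<1$, and a positive $\alpha$ satisfying this exists iff $(d-s)_+<2$, i.e. $s>d-2$; this is precisely the dichotomy~\eqref{E:HaB}. (Alternatively, since $s>d-2$ forces $\Upsilon(0)<\infty$ by~\eqref{E:Upsilon_0}, Lemma~\ref{L:HUA} reduces~\eqref{E:HaB} to $\Upsilon_{2\alpha}(0)<\infty$, which by~\eqref{E:UpsionB} with $\alpha$ replaced by $2\alpha$ is exactly $s>d-2(1-2\alpha)>0$.)

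To get the full expansion~\eqref{E:HaBS}, I expand $g(r)$ as $r\to0$ by splitting the $u$-integral. The part of the integral bounded away from $u=0$ gives, after expanding $(u+r)^{-d/2}$ in powers of $r/u$, the analytic contribution $\frac{\pi^{d/2}}{\Gamma(s/2)}\sum_{k\ge0}\binom{-d/2}{k}\Gamma\!\big(\tfrac{s-d}{2}-k\big)r^k$, where only the finitely many terms with $\tfrac{s-d}{2}-k>0$ converge and the rest are absorbed into the remainder (Watson's lemma); the region $u\asymp r$, after the scaling $u=rv$, contributes the non-analytic term $c_s r^{(s-d)/2}$ whose coefficient is the (analytically continued) Beta integral $\int_0^\infty v^{s/2-1}(1+v)^{-d/2}\ud v=\Gamma(s/2)\Gamma((d-s)/2)/\Gamma(d/2)$. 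These are exactly the five regimes of the small-argument expansion of $U$ recorded in~\cite[13.2]{olver.lozier.ea:10:nist}, corresponding to $b:=d/2+1-s/2$ being $>1$, $=1$, $\in(0,1)$, $=0$, $<0$, i.e. $s<d$, $s=d$, $d<s<d+2$, $s=d+2$, $s>d+2$; logarithms appear precisely at the resonances $s=d$ (collision of the $k=0$ analytic term with the non-analytic exponent) and $s=d+2$ (the $k=1$ collision), and in the case $s=d$ the $\psi(d/2)$–$\gamma$ coefficient arises from the finite part $\lim_{\delta\downarrow0}\big(\int_0^\delta v^{d/2-1}(1+v)^{-d/2}\ud v-\log(1/\delta)\big)=-\gamma-\psi(d/2)$ together with $E_1(\delta)=-\gamma-\log\delta+O(\delta)$. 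Finally, substituting the expansion of $g$ into $\mathcal{H}_\alpha(t)=\int_0^t r^{-2\alpha}g(r)\,\ud r$ and integrating term by term, using $\int_0^t r^{-2\alpha}\,\ud r=\frac{t^{1-2\alpha}}{1-2\alpha}$ and $\int_0^t r^{-2\alpha}\log(1/r)\,\ud r=\frac{t^{1-2\alpha}}{1-2\alpha}\log\frac1t+\frac{t^{1-2\alpha}}{(1-2\alpha)^2}$, produces each line of~\eqref{E:HaBS}, with the stated error orders coming from the first omitted term of $g$.

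I expect the main obstacle to be the bookkeeping in the two resonant cases $s=d$ and $s=d+2$: isolating the $\delta$-independent finite part of a divergent $u$-integral, keeping track of how the colliding exponents generate the logarithm, and — in the case $s=d$ — pinning down the exact $\psi(d/2)$–$\gamma$ combination. Everything else is routine Beta-integral and Watson's-lemma computation.
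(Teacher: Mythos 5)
Your proposal is correct and follows essentially the same route as the paper: polar coordinates plus a Beta integral for \eqref{E:UpsionB}--\eqref{E:Upsilon_0}, and the representation of the spatial integral as $\pi^{d/2}\,U\!\left(d/2,(2+d-s)/2,r\right)$ followed by its small-$r$ expansion in the five regimes and termwise integration against $r^{-2\alpha}\,\ud r$ for \eqref{E:HaB}--\eqref{E:HaBS}. The only real difference is that the paper simply cites the NIST expansions 13.2.18--13.2.22 where you rederive them via subordination and Watson's lemma (a minor slip: the finite-part identity in your $s=d$ discussion should involve $\int_0^{1/\delta}$ rather than $\int_0^{\delta}$ for the stated limit $-\gamma-\psi(d/2)$ to hold).
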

\begin{proof}
  By the spherical coordinate integration formula and~\eqref{E:FB}, for all $\alpha\in[0,1)$,
  \begin{align*}
    \Upsilon_\alpha(0) = \left(2\pi\right)^{-d} \int_{\R^d} \frac{\ud\xi}{|\xi|^{2(1-\alpha)}\left(1+|\xi|^2\right)^{s/2}}
    = \left(2\pi\right)^{-d} C_d \int_0^\infty \frac{r^{d-1}}{r^{2(1-\alpha)}\left(1+r^2\right)^{s/2}} \ud r,
  \end{align*}
  where $C_d\coloneqq \frac{2\pi^{d/2}}{\Gamma\left(d/2\right)}$. Now by the change of variables
  $z = r^2 /(1+r^2)$, we can evaluate the above integral explicitly by transforming it to the Beta
  integral:
  \begin{align*}
    \int_0^\infty \frac{r^{d-1}}{r^{2(1-\alpha)}\left(1+r^2\right)^{s/2}} \ud r
     & = \frac{1}{2} \int_0^1 z^{d/2+\alpha-2}(1-z)^{(s-d)/2-\alpha}\ud z \\
     & = \frac{\Gamma\left(d/2-1+\alpha\right)\Gamma\left((s-d)/2+1-\alpha\right)}{2\Gamma(s/2)},
  \end{align*}
  which is finite provided that $s>d-2(1-\alpha)>0$. This proves~\eqref{E:UpsionB} and from this, we
  easily deduce~\eqref{E:Upsilon_0} by letting $\alpha = 0$ in~\eqref{E:UpsionB} and by applying the
  formula $\Gamma(z+1) = z\Gamma(z)$, which holds for $z\in \mathbb{C}$ such that $\Re(z)>0$.

  It remains to prove~\eqref{E:HaBS}, which then implies~\eqref{E:HaB}. From~\eqref{E:H} and by the
  spherical coordinate integration formula, for all $t>0$,
  \begin{align*}
    \mathcal{H}_{\alpha}(t)
      = \int_0^t \ud r \; r^{-2\alpha} \int_{\R^d} \ud \xi \; \frac{\exp(-r|\xi|^2)}{(1+|\xi|^2)^{s/2}}
    & = \frac{C_d}{2} \int_0^t \ud r \; r^{-2\alpha} \int_0^\infty \ud u \; \exp(-r u)(1+u)^{-s/2} u^{d/2-1} \\
    & \eqqcolon \frac{C_d \Gamma\left(d/2\right)}{2} \int_0^t \ud r \; r^{-2\alpha} I(r) = \pi^{d/2}\int_0^t \ud r \; r^{-2\alpha} I(r).
  \end{align*}
  By~\cite[13.4.4 on p.326]{olver.lozier.ea:10:nist}, $I(r)$ is equal to the \textit{confluent
  hypergeometric function}:
  \begin{align}\label{E:ConHyGe}
    I(r) =U\left(\frac{d}{2},\frac{2+d-s}{2},r\right).
  \end{align}
  By 18.2.18 -- 13.2.22 on p. 323 \textit{ibid.}, we see that \renewcommand{\arraystretch}{2}
  \begin{align*}
    I(r) = \left\{
      \begin{array}{lcl}
        \dfrac{\Gamma\left((d-s)/2\right)}{\Gamma(d/2)}r^{(s-d)/2}+\dfrac{\Gamma\left((s-d)/2\right)}{\Gamma(s/2)} + O\left(r^{(s-d)/2+1}\right) & d-2<s<d & \text{\small 18.2.18}, \\
        -\dfrac{1}{\Gamma(d/2)}\left(\log(r) + \psi(d/2) + 2\gamma \right) +O\left(r\log(r)\right)                                               & s=d     & \text{\small 18.2.19}, \\
        \dfrac{\Gamma\left((s-d)/2\right)}{\Gamma(s/2)} + O\left(r^{(s-d)/2}\right)                                                              & d<s<d+2 & \text{\small 18.2.20}, \\
        \dfrac{1}{\Gamma\left(d/2+1\right)} + O\left(r\log(r)\right)                                                                             & s=d+2   & \text{\small 18.2.21}, \\
        \dfrac{\Gamma\left((s-d)/2\right)}{\Gamma(s/2)} + O\left(r\right)                                                                        & s>d+2   & \text{\small 18.2.22}. \\
      \end{array}
      \right.
  \end{align*}
  Then integrating the right-hand side of the above expressions against $\pi^{d/2}r^{-2\alpha}\ud r$
  over $[0,t]$ gives the five cases in~\eqref{E:HaBS}. This completes the proof of
  Proposition~\ref{P:Bessel}.
\end{proof}

Similarly, one can use the Bessel kernel as the spectral density. In this case, we have the
following proposition:

\begin{proposition}[Bessel kernel as spectral density]\label{P:Bessel'}
  Suppose that the spectral density $\widehat{f}$ is given by the Bessel kernel $f_s(\cdot)$ defined
  in Example~\ref{Ex:Bessel}, or equivalently (see~\eqref{E:FB}), suppose that $f(x) =
  (1+|x|)^{-s/2}$ for $s>0$. Then
  \begin{align}\label{E:UpsionB'}
    \Upsilon_{\alpha}(0)
    = \frac{\Gamma\left(1 - \alpha\right) \Gamma\left(\alpha - 1 + s/2\right)}{2^{2d} \pi^{3d/2} \Gamma(d/2)\Gamma\left(s/2\right) }
    \qquad \text{for all $s>2(1-\alpha)>0$,}
  \end{align}
   and in particular when $\alpha = 0$,~\eqref{E:UpsionB'} simplifies to the following:
  \begin{equation}\label{E:Upsilon_0'}
     \Upsilon(0) = \frac{2^{1-2 d}\pi^{-3d/2}}{(s-2)\Gamma\left(d/2\right)}
     \qquad \text{for all $s>2$.}
  \end{equation}
  In addition,
  \begin{gather}\label{E:HaB'}
    \mathcal{H}_\alpha(t) <\infty \quad \forall t>0 \quad \Longleftrightarrow \quad
    0<\alpha<\frac{1}{2} \quad \text{and} \quad s>0.
  \end{gather}
 Moreover, for $\alpha\in(0,1/2)$, we have the following asymptotic
  \begin{align}\label{E:HaBS'}
    \mathcal{H}_\alpha(t) \sim \frac{t^{1-2\alpha}}{1-2\alpha}, \quad \text{as $t\downarrow 0.$}
  \end{align}
\end{proposition}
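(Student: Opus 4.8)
The proof mirrors that of Proposition~\ref{P:Bessel}. Writing the spectral measure as $\widehat{f}(\ud\xi)=f_s(\xi)\,\ud\xi$, the two objects to evaluate are
\[
  \Upsilon_\alpha(0)=(2\pi)^{-d}\int_{\R^d}\frac{f_s(\xi)}{|\xi|^{2(1-\alpha)}}\,\ud\xi
  \qquad\text{and}\qquad
  \mathcal{H}_\alpha(t)=\int_0^t\ud r\;r^{-2\alpha}\int_{\R^d}f_s(\xi)\,\e^{-r|\xi|^2}\,\ud\xi .
\]
For $\Upsilon_\alpha(0)$ the plan is to substitute the standard subordination identity for the Bessel kernel,
\[
  f_s(x)=\frac{(4\pi)^{-d/2}}{\Gamma(s/2)}\int_0^\infty \e^{-u}\,\e^{-|x|^2/(4u)}\,u^{(s-d)/2-1}\,\ud u ,
\]
(equivalently one may use Plancherel's theorem together with $\mathcal{F}f_s=(1+|\cdot|^2)^{-s/2}$ from~\eqref{E:FB} and the Fourier transform of the Riesz kernel $|\cdot|^{-2(1-\alpha)}$; both routes land on the same Beta integral as in Proposition~\ref{P:Bessel}).

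Concretely, I would interchange the $\ud\xi$ and $\ud u$ integrals — legitimate by Tonelli since everything is nonnegative — then evaluate the inner Gaussian integral in spherical coordinates,
\[
  \int_{\R^d}\frac{\e^{-|\xi|^2/(4u)}}{|\xi|^{2(1-\alpha)}}\,\ud\xi
  =\frac{2\pi^{d/2}}{\Gamma(d/2)}\,2^{d-3+2\alpha}\,\Gamma\!\left(\tfrac d2-1+\alpha\right)u^{d/2-1+\alpha},
\]
and finally recognise the remaining integral $\int_0^\infty \e^{-u}u^{s/2-2+\alpha}\,\ud u=\Gamma(s/2-1+\alpha)$, which converges exactly when $s>2(1-\alpha)$, while the factor $\Gamma(\tfrac d2-1+\alpha)$ is finite as soon as $2(1-\alpha)<d$ — automatic for $d\ge3$, the regime of interest. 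Collecting the normalising constants then gives the closed form~\eqref{E:UpsionB'}, and setting $\alpha=0$ and using $\Gamma(z+1)=z\Gamma(z)$ yields~\eqref{E:Upsilon_0'}.

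For $\mathcal{H}_\alpha(t)$ the essential observation — which has no counterpart in Proposition~\ref{P:Bessel} — is that $f_s$ is itself integrable, with $\Norm{f_s}_{L^1(\R^d)}=1$. Hence, for $0<r\le t$,
\[
  0<\int_{\R^d}f_s(\xi)\,\e^{-t|\xi|^2}\,\ud\xi
  \le\int_{\R^d}f_s(\xi)\,\e^{-r|\xi|^2}\,\ud\xi
  \le\int_{\R^d}f_s(\xi)\,\ud\xi=1 .
\]
The upper bound gives $\mathcal{H}_\alpha(t)\le\int_0^t r^{-2\alpha}\,\ud r=t^{1-2\alpha}/(1-2\alpha)<\infty$ whenever $\alpha\in(0,1/2)$ (and $s>0$, needed for $f_s$ to be defined), and the strictly positive lower bound forces $\mathcal{H}_\alpha(t)=\infty$ as soon as $\alpha\ge1/2$; this is precisely~\eqref{E:HaB'}. (Lemma~\ref{L:HUA} would also give one implication, but the $L^1$ bound is sharper and needs no condition on $s$.) For the $t\downarrow0$ asymptotics, set $\phi(r):=\int_{\R^d}f_s(\xi)\,\e^{-r|\xi|^2}\,\ud\xi$; since $0\le 1-\phi(r)=\int_{\R^d}f_s(\xi)\bigl(1-\e^{-r|\xi|^2}\bigr)\,\ud\xi\to0$ as $r\downarrow0$ by dominated convergence,
\[
  \Bigl|\mathcal{H}_\alpha(t)-\frac{t^{1-2\alpha}}{1-2\alpha}\Bigr|
  =\Bigl|\int_0^t r^{-2\alpha}\bigl(\phi(r)-1\bigr)\,\ud r\Bigr|
  \le\frac{t^{1-2\alpha}}{1-2\alpha}\,\sup_{0<r\le t}\bigl(1-\phi(r)\bigr)=o\bigl(t^{1-2\alpha}\bigr),
\]
which is~\eqref{E:HaBS'}.

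None of the steps is delicate: the computations have the same routine character as in Proposition~\ref{P:Bessel}. The only points needing care are the bookkeeping of the several Fourier normalising constants — so that the answer matches~\eqref{E:UpsionB'} — and keeping the implicit constraint $2(1-\alpha)<d$ in view, which is automatic in the dimensions $d\ge3$ relevant to the paper. The one genuinely new ingredient, and the reason the conclusions here are cleaner than in Proposition~\ref{P:Bessel}, is that $f_s\in L^1(\R^d)$ with unit mass, which renders the analysis of $\mathcal{H}_\alpha$ essentially immediate.
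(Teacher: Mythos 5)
Your treatment of $\mathcal{H}_\alpha$ is correct and genuinely different from (and simpler than) the paper's: the paper converts $\int_{\R^d} f_s(\xi)\e^{-r|\xi|^2}\,\ud\xi$ back to physical space by Plancherel and then invokes the small/large-argument asymptotics of the confluent hypergeometric function $U\left(d/2,(2+d-s)/2,\cdot\right)$, whereas you use only $\Norm{f_s}_{L^1(\R^d)}=1$ and dominated convergence to get $\phi(r)\coloneqq\int_{\R^d} f_s(\xi)\e^{-r|\xi|^2}\,\ud\xi\uparrow1$ as $r\downarrow0$. This yields both \eqref{E:HaB'} (including the ``only if'' direction, via $\mathcal{H}_\alpha(t)\ge\phi(t)\int_0^t r^{-2\alpha}\ud r$) and \eqref{E:HaBS'} in a few lines, with no condition on $s$ beyond $s>0$. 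That part I would accept as is.

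For $\Upsilon_\alpha(0)$ there is a real problem, although it is one you share with the paper rather than one you introduce. Your subordination identity and your evaluation of the inner Gaussian integral are both correct, but if you actually collect the constants you obtain
\begin{align*}
  \Upsilon_\alpha(0)
  =\frac{2^{2\alpha-d-2}\,\Gamma\left(\tfrac{d}{2}-1+\alpha\right)\Gamma\left(\tfrac{s}{2}-1+\alpha\right)}{\pi^{d}\,\Gamma(d/2)\,\Gamma(s/2)},
\end{align*}
which is \emph{not} \eqref{E:UpsionB'}: the powers of $2$ and $\pi$ disagree, and $\Gamma\left(d/2-1+\alpha\right)$ appears where \eqref{E:UpsionB'} has $\Gamma(1-\alpha)$. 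The discrepancy originates in the paper's own Plancherel step, which replaces $\int f_s(\xi)|\xi|^{-2(1-\alpha)}\,\ud\xi$ by $(2\pi)^{-d}\int\widehat{f}_s(\xi)\,|\xi|^{-(d-2(1-\alpha))}\,\ud\xi$ as if $\mathcal{F}\left(|\cdot|^{-\beta}\right)(\xi)=|\xi|^{-(d-\beta)}$ exactly; in the paper's convention the Riesz kernel transform carries the factor $2^{d-\beta}\pi^{d/2}\Gamma\left((d-\beta)/2\right)/\Gamma(\beta/2)$ with $\beta=2(1-\alpha)$, whose $\Gamma(1-\alpha)$ in the denominator cancels the $\Gamma(1-\alpha)$ produced by the Beta integral and leaves exactly the $\Gamma\left(d/2-1+\alpha\right)$ of your route (sanity check: $d=3$, $\beta=2$ gives $\mathcal{F}\left(|x|^{-2}\right)(\xi)=2\pi^2|\xi|^{-1}$, not $|\xi|^{-1}$). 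So you cannot simply assert that ``collecting the normalising constants gives \eqref{E:UpsionB'}''; you should write out the constant you actually obtain and flag the disagreement with the stated formula. You are right — and the statement is silent about it — that the factor $\Gamma\left(d/2-1+\alpha\right)$ forces $2(1-\alpha)<d$ for finiteness, which is automatic only for $d\ge3$. The qualitative conclusion (finiteness exactly when $s>2(1-\alpha)$, in dimension $d\ge3$) is unaffected.
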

\begin{proof}
  By similar arguments as Proposition~\ref{P:Bessel}, we have that
	\begin{align*}
     \Upsilon_{\alpha}(0)
     & = (2\pi)^{-d} \int_{\R^d} \frac{f_s(\xi)}{|\xi|^{2(1-\alpha)}} \ud \xi
       = (2\pi)^{-2d} \int_{\R^d} \frac{\widehat{f}_s(\xi)}{|\xi|^{d-2(1-\alpha)}} \ud \xi                                                               \\
     & = (2\pi)^{-2d} \frac{2 \pi^{d/2}}{\Gamma\left(d/2\right)} \int_0^\infty \frac{r^{d-1}}{ \left(1+r^2\right)^{s / 2} \: r^{d - 2(1-\alpha)} } \ud r
       = \frac{\Gamma\left(1 - \alpha\right) \Gamma\left(\alpha - 1 + s/2\right)}{2^{2d} \pi^{3d/2} \Gamma(d/2)\Gamma\left(s/2\right) },
	\end{align*}
  which is finite provided $s>2(1-\alpha)$. This proves both~\eqref{E:UpsionB'}
  and~\eqref{E:Upsilon_0'}. As for~\eqref{E:HaB'},
	\begin{align*}
        \mathcal{H}_\alpha(t)
    & = \int_{0}^t \ud r \: r^{-2\alpha} \int_{\R^d} \ud \xi \: f_{s}(\xi) \exp\left(-r|\xi|^2\right)                                                    \\
    & = (2\pi)^{-d} \pi^{d/2} \int_{0}^t \ud r \: r^{-2\alpha} \int_{\R^d} \ud \xi \: \widehat{f}_{s}(\xi) \exp\left(-\frac{|\xi|^2}{4r}\right) r^{-d/2} \\
    & = (2\pi)^{-d} \pi^{d/2} \int_{0}^t \ud r \: r^{-2\alpha} \int_{\R^d} \ud \xi \: (1 + |\xi|^2)^{-s / 2} \exp\left(-\frac{|\xi|^2}{4r}\right) r^{-d/2}\\
    & = (2\pi)^{-d} \pi^{d/2} \frac{2 \pi^{d/2}}{\Gamma\left(d/2\right)} \int_{0}^t \ud r \: r^{-2\alpha - d/2} \int_0^{\infty} \ud z \: z^{d-1} (1 + z^2)^{-s/2} \exp\left(-\frac{z^2}{4r}\right),
\end{align*}
  where we have used Plancherel's theorem and the following identities:
	\begin{align*}
    \mathcal{F}\left(\exp(-|\cdot|^2)\right)(\xi) = \pi^{d/2} \exp\left(-4^{-1}|\xi|^2\right) \quad \text{and} \quad
    \mathcal{F}(f(a\cdot))(\xi) = a^{-d}\mathcal{F}f(\xi / a).
	\end{align*}
  Then, by the same arguments as Proposition~\ref{P:Bessel},
	\begin{align*}
        \mathcal{H}_\alpha(t)
     & = 2^{-d} \int_{0}^t \ud r \: r^{-2\alpha - d/2} I\left(\frac{1}{4r}\right),
      \quad \text{with $I(r) = U\left(\frac{d}{2}, \frac{2+d-s}{2}, r\right)$,}
	\end{align*}
  where $U$ is given in~\eqref{E:ConHyGe}. As $r \to \infty$, $I(r) \sim r^{-d/2}$ thus as $r \to
  0$, $I\left(\frac{1}{4r}\right) \sim (4r)^{d/2}$ (see 13.2.6 on p. 322
  of~\cite{olver.lozier.ea:10:nist}). Hence, the above integral behave as follows:
	\begin{align*}
     \mathcal{H}_\alpha(t)
     \sim \int_{0}^t \ud r \: r^{-2\alpha - \frac{d}{2}} (4r)^{d/2}
     = \int_{0}^t \ud r \: r^{-2\alpha}
     = \frac{t^{1-2\alpha}}{1-2\alpha},
	\end{align*}
  provided $\alpha \in (0, 1/2)$, which proves both~\eqref{E:HaB'} and~\eqref{E:HaBS'}.
\end{proof}

The necessity of the finiteness of $\Upsilon(0)$ excludes the Riesz kernel as a choice for the
spectral density. However, we can still construct a Riesz-type kernel which has polynomial growth at
the origin and polynomial decay at infinity, but with different rates, using
Propositions~\ref{P:Bessel} and~\ref{P:Bessel'}. This Riesz-type type kernel gives another example
of a kernel that is easily verifiable to be permissible under the conditions of our
Theorem~\ref{T:Main} above, while being demanding to verify using~\eqref{E:TZspec}; see
Example~\ref{Ex:Riesz-T} for more details.

\begin{example}[Riesz-type kernel]\label{Ex:Riesz-T}
  For $s_1, s_2\in (0,d)$, let $f_{s_1}$ and $f_{s_2}$ be Bessel kernels as in
  Example~\ref{Ex:Bessel}. Define
  \begin{align*}
    r(x)\coloneqq f_{s_1}(x) + \widehat{f}_{s_2}(x) \quad \text{or equivalently} \quad
    \widehat{r}(\xi)\coloneqq \widehat{f}_{s_1}(x) + f_{s_2}(x).
  \end{align*}
  It is easy to see that $r(\cdot)$ is both non-negative and non-negative definite which follows
  immediately from the linearity of the Fourier transform and the fact that the Bessel kernel is
  both non-negative and non-negative definite. Also, we easily deduce from properties (2) - (4) in
  Example~\ref{Ex:Bessel} that
  \begin{align*}
    r(x) \sim
    \begin{cases}
      |x|^{s_1-d} & |x| \to 0, \\
      |x|^{-s_2}  & |x| \to \infty,
    \end{cases}
    \quad \text{and} \quad
    \widehat{r}(\xi) \sim
    \begin{cases}
      |\xi|^{s_2 - d} & |\xi| \to 0, \\
      |\xi|^{-s_1}    & |\xi| \to \infty.
    \end{cases}
  \end{align*}
  Propositions~\ref{P:Bessel} and~\ref{P:Bessel'} imply that
  \begin{align*}
    \Upsilon_{\alpha}(0) =
    \frac{\Gamma\left(\frac{d}{2} -1 + \alpha\right) \Gamma\left(\frac{s_1 -d}{2} + 1 - \alpha\right)}{2^d \pi^{d/2} \: \Gamma(d/2)\:\Gamma\left(s_1/2\right)}
    + \frac{\Gamma\left(1 - \alpha\right) \Gamma\left(\alpha - 1 + \frac{s_2}{2}\right)}{2^{2d} \pi^{3d/2} \: \Gamma(d/2) \: \Gamma\left(s_2 / 2\right)} < \infty,
  \end{align*}
  provided that
	\begin{align*}\label{E:RieszTypeCond}
    0 < d - 2(1-\alpha) < s_1 < d  \qquad \text{and} \qquad
    0 < 2(1-\alpha) < s_2 < d.
	\end{align*}
  In contrast, it is not clear how to compute $\mathcal{F}\big(\sqrt{\widehat{r}}\:\big)$; see
  condition~\eqref{E:TZspec}.
\end{example}

\subsection{Examples of admissible weight functions}\label{SS:Weight}

In this part, we give some examples of the admissible weight functions. As given in Section 2
of~\cite{tessitore.zabczyk:98:invariant}, the following functions are admissible functions:
\begin{equation}\label{E:admRho}
\begin{dcases}
  \rho(x) = \exp(-a|x|)               & a>0, \\
  \rho(x) = \left(1+|x|^a\right)^{-1} & a>d.
\end{dcases}
\end{equation}

The smaller the weight function $\rho(\cdot)$ (not necessarily admissible) is, the larger the space
$L_\rho^2(\R^d)$ is. For example, one may choose $\rho$ to be either a nonnegative function with
compact support or the heat kernel itself $G(1,\cdot)$. In both cases, $\rho$ is smaller than those
in~\eqref{E:admRho} (up to a constant). However, one can easily check that the admissible
condition~\eqref{E:aw} excludes these two cases. However, the examples in the following
Proposition~\ref{P:adm} seem to be less obvious:

\begin{proposition}\label{P:adm}
	$\rho_b(\cdot)$ is admissible if and only if $b\in (0,1]$ where
	\begin{equation*}
	\rho_b(x) \coloneqq \exp\left(-|x|^b\right), \quad x\in\R^d,
	\quad \text{with $b>0$.}
	\end{equation*}
\end{proposition}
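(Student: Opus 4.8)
The plan is to verify the three defining properties of an admissible weight function (Definition~\ref{D:Rho}) for $\rho_b(x) = \exp(-|x|^b)$, and to see precisely where $b \le 1$ becomes necessary. The function $\rho_b$ is clearly strictly positive, bounded by $1$, and continuous for every $b > 0$; moreover $\rho_b \in L^1(\R^d)$ for every $b > 0$ by the spherical coordinate formula, since $\int_0^\infty e^{-r^b} r^{d-1}\, \ud r < \infty$. So the only substantive requirement is the convolution inequality~\eqref{E:aw}: for each $T > 0$ there is $C_\rho(T)$ with $\big(G(t,\cdot) * \rho_b\big)(x) \le C_\rho(T)\,\rho_b(x)$ for all $t \in [0,T]$ and $x \in \R^d$. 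Equivalently, writing this out, we must control
\begin{align*}
  \frac{\big(G(t,\cdot)*\rho_b\big)(x)}{\rho_b(x)}
  = \int_{\R^d} (2\pi t)^{-d/2} \exp\left(-\frac{|x-y|^2}{2t} - |y|^b + |x|^b\right) \ud y
\end{align*}
uniformly over $t \in [0,T]$ and $x \in \R^d$.

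For the sufficiency direction ($b \in (0,1]$), the key point is the sub-additivity inequality $|y|^b \ge |x|^b - |x-y|^b$ for $b \in (0,1]$ (since $t \mapsto t^b$ is subadditive on $[0,\infty)$ when $b \le 1$), so that $-|y|^b + |x|^b \le |x-y|^b$. Substituting $z = x - y$, it then suffices to bound $\int_{\R^d} (2\pi t)^{-d/2} \exp\left(-\frac{|z|^2}{2t} + |z|^b\right) \ud z$ uniformly for $t \in [0,T]$. Scaling $z = \sqrt{t}\,w$ turns this into $\int_{\R^d} (2\pi)^{-d/2} \exp\left(-\frac{|w|^2}{2} + t^{b/2}|w|^b\right) \ud w$, and since $b/2 \le 1/2 < 1$ the Gaussian factor dominates $t^{b/2}|w|^b \le T^{b/2}|w|^b$ for large $|w|$; the integral is finite and bounded by a constant depending only on $b$, $d$ and $T$. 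This gives $C_\rho(T)$ and establishes admissibility.

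For the necessity direction ($b > 1$ fails), I would show the ratio above is \emph{unbounded} even for fixed small $t$. The heuristic is that for $b > 1$ the function $|y|^b$ is strictly convex and grows faster than $|y|^2/(2t)$ only beyond a threshold, and near a far-away point $x$ the competition between the Gaussian penalty $|x-y|^2/(2t)$ and the gain $|x|^b - |y|^b$ is won along the ray toward the origin: taking $y = (1-\epsilon)x$ with $|x|$ large and $\epsilon$ small, one has $|x|^b - |y|^b \approx b\epsilon|x|^b$ while $|x-y|^2/(2t) = \epsilon^2|x|^2/(2t)$, so choosing $\epsilon \sim |x|^{b-2}$ (which is small and legitimate once $b < 2$; for $b \ge 2$ the blow-up is even more direct) makes the exponent grow like $|x|^{2b-2} \to \infty$. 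Integrating over a neighborhood of that $y$ shows $\big(G(t,\cdot)*\rho_b\big)(x)/\rho_b(x) \to \infty$ as $|x| \to \infty$ for any fixed $t > 0$, contradicting~\eqref{E:aw}. \textbf{The main obstacle} is making this lower-bound argument fully rigorous across all $b > 1$ — in particular handling the case $b \ge 2$ where $|y|^b$ eventually dominates the Gaussian outright, so that $G(t,\cdot)*\rho_b$ may fail to be finite, versus $1 < b < 2$ where the integral is finite but the ratio still diverges; a clean uniform statement (e.g. a Laplace-method lower bound on the exponent's supremum over $y$) is what needs care, whereas the sufficiency half is routine once the subadditivity trick is in place.
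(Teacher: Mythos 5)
Your sufficiency argument ($b\in(0,1]$) is exactly the paper's: subadditivity of $r\mapsto r^b$, the substitution $z=x-y$, and the scaling $z=\sqrt{t}\,w$ so that the Gaussian absorbs $T^{b/2}|w|^b$. Nothing to add there.

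For necessity ($b>1$) your route is genuinely different from the paper's. The paper fixes $t=1/2$, takes $x_r=(r,0,\dots,0)$, reduces by a product decomposition to the one-dimensional integral $K(r)=\int_0^r e^{-y^2+r^b-c|y-r|^b}\,\ud y$, and shows $K(r)\to\infty$ by bounding $K'(r)$ from below via the mean value theorem (with a separate direct estimate for $b>2$). You instead propose a localized Laplace-type lower bound at the near-optimal point $y_0=(1-\epsilon)x$. That idea is correct and, once written out, is arguably more transparent than the paper's derivative computation: for $1<b<2$ take $\epsilon = tb|x|^{b-2}$ and integrate over the unit ball $B(y_0,1)$; on that ball the mean value theorem gives $|x|^b-|y|^b \ge b\,|y_0|^{b-1}(\epsilon|x|-1)\sim tb^2|x|^{2b-2}$ while the Gaussian penalty is $\sim \tfrac{tb^2}{2}|x|^{2b-2}$, so the exponent grows like $\tfrac{tb^2}{2}|x|^{2b-2}\to\infty$ and the constant-volume ball contributes a divergent lower bound for the ratio. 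For $b\ge 2$ it is even simpler: integrating over $\{|y|\le 1\}$ already gives
\begin{align*}
\frac{\big(G(t,\cdot)*\rho_b\big)(x)}{\rho_b(x)} \ge C_t\, e^{-(|x|+1)^2/(2t)+|x|^b-1}\longrightarrow\infty
\end{align*}
for fixed small $t$ when $b>2$, and the $\epsilon=2t$ choice handles $b=2$. As submitted, however, your necessity half is only a sketch — you say so yourself — so it is not yet a proof; the two paragraphs above are what is needed to close it. One small correction: for $b\ge 2$ the convolution $G(t,\cdot)*\rho_b$ cannot ``fail to be finite'' — since $\rho_b\le 1$ and $G(t,\cdot)$ is a probability density, the convolution is always bounded by $1$. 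The failure of admissibility is entirely due to the denominator $\rho_b(x)$ decaying faster than the numerator, not to any divergence of the integral.
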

\begin{proof}
  From Definition~\eqref{D:Rho}, we see that $\rho_b$ is admissible if and only if for all $T>0$,
	\begin{align*}
	\sup_{(t,x)\in[0,T]\times\R^d} \int_{\R^d} \left(2\pi t\right)^{-d/2} e^{-\frac{|x-y|^2}{2t} + (|x|^b - |y|^b)} \ud y <\infty.
	\end{align*}
  Denote the above integral by $I(t,x)$. We will use $C$ to denote a generic constant that does not
  depend on $(t,x)$, which value may change at each occurrence.

	We first assume that $b\in (0,1]$. In this case,
	\begin{align*}
    | x | ^b
    = | x-y +y | ^b
    \le \left(|x-y| + |y|\right)^b
    \le |x-y|^b + |y|^b.
	\end{align*}
	Hence,
	\begin{align*}
    I(t,x) \le C \int_{\R^d} t^{-d/2} e^{-\frac{|x-y|^2}{2t} + |x-y|^b} \ud y
    = C \int_{\R^d} t^{-d/2} e^{-\frac{|y|^2}{2t} + |y|^b} \ud y
    = C \int_{0}^\infty t^{-d/2} e^{-\frac{r^2}{2t} + r^b} r^{d-1} \ud r.
	\end{align*}
	Then by applying the change of variables $r' = t^{-1/2}r$,
	\begin{align*}
    I(t,x) \le C \int_0^\infty e^{-\frac{r^2}{2} + t^{b/2} r^b} r^{d-1}\ud r
    \le C \int_0^\infty e^{-\frac{r^2}{2} + T^{b/2} r^b} r^{d-1}\ud r
    < \infty.
	\end{align*}

  Next we assume that $b>1$. We need to show that $\rho_b$ is not admissible. Without loss of
  generality, we assume that $d\ge 2$. The case when $d = 1$ is easier and can be proved similarly
  as the proof below. It suffices to show that
	\begin{align*}
    \lim_{r\to\infty} I(1/2,x_r) = \infty, \quad \text{where $x_r:=(r,0,\cdots,0)\in\R^d$.}
	\end{align*}
  Without loss of generality, we may assume below that $r\gg 2$. Denote $y=(y_1,\cdots, y_d) =
  (y_1,y_*)$ with $y_*\in\R^{d-1}$. Using the subadditivity (resp. convexity) of $(x+y)^{b/2}$ when
  $b\in (1,2]$ (resp. $b>2$), we see that
	\begin{align*} 
	| y | ^b =  \left(y_1^2+y_2^2+\cdots+y_d^2\right)^{b/2}
	\le c |y_1|^b + \left(y_2^2+\cdots+y_d^2\right)^{b/2}
	=   c |y_1|^b + c |y_*|^b, \quad  c:= 1 \wedge 2^{b/2 -1}.
	\end{align*}
	Hence,
	\begin{align*}
    I\left(1/2,x_r\right)
    & = C \int_{\R^d} e^{- \left(\sum_{i = 2}^{d} y_i^2\right) - |y_1-r|^2 + (r^b - |y|^b)} \ud y               \\
    & \ge C \int_{\R^{d-1}}\ud y_* \int_\R \ud y_1\: e^{- |y_*|^2 -|y_*|^b - |y_1-r|^2 + (r^b - c |y_1|^b)}     \\
    & = C \int_{\R^{d-1}}\ud y_* \: e^{- |y_*|^2 -c |y_*|^b} \int_\R \ud y_1\: e^{- |y_1-r|^2 + r^b - c|y_1|^b} \\
    & = C \int_\R e^{- y^2 + r^b - c |y-r|^b} \ud y
      \ge C \int_0^r e^{- y^2 + r^b - c |y-r|^b} \ud y \eqqcolon C K(r).
  \end{align*}
	It suffices to show that $\lim_{r\to\infty}K(r) = \infty$, which is true when $b>2$ because
  \begin{align*}
    K(r)
    \ge \int_{r/2}^{r} e^{-y^2 +r^b - c (r-y)^b} \ud y
    \ge \int_{r/2}^{r} e^{-r^2 +r^b - c (r/2)^b} \ud y
    = \frac{r}{2} \exp\left(\left(1-2^{-1-b/2}\right)r^b -r^2\right),
	\end{align*}
	which blows up as $r\to \infty$. Hence, we may assume that $b\in (1,2]$. In this case, $c=1$ and
	\begin{align*} 
	K'(r) = e^{r^b-r^2} + b \int_0^r e^{ - y^2 + r^b - (r-y)^b} \left(r^{b-1}-(r-y)^{b-1}\right) \ud y.
	\end{align*}
  By the intermediate value theorem, we see that $r^{b-1} - (r-y)^{b-1} = (b-1) y \xi^{b-2}$ for
  some $\xi\in [r-y,r]$. Since $b-1\in(0,1]$, this implies that $r^{b-1} - (r-y)^{b-1} \ge (b-1) y
  r^{b-2}$. Hence,
	\begin{align*}
	K'(r) & \ge  e^{r^b-r^2} + b(b-1) \int_1^r e^{ - y^2 + r^b - (r-y)^b } y r^{b-2} \ud y \\
	& \ge  b(b-1)r^{b-2} \int_1^r e^{ - y^2 + r^b - (r-1)^b }  \ud y
	\ge  b(b-1)r^{b-2} e^{r^b-(r-1)^b}\int_1^2 e^{ - y^2 } \ud y.
	\end{align*}
  Another application of the intermediate value theorem shows that $r^b-(r-1)^b = b \xi^{b-1}$ with
  $\xi\in [r-1,r]$. Hence, $r^b-(r-1)^b \ge b (r-1)^{b-1}$ and then
	\begin{align*}
	K'(r) & \ge C r^{b-2} \exp\left(b (r-1)^{b-1}\right).
	\end{align*}
  Hence, for $r \gg 2$, $K'(r)$ is positive and unbounded as $r\to\infty$. Therefore, this implies
  that $K(r)$ blows up as $r\to\infty$, which completes the proof of Proposition~\ref{P:adm}.
\end{proof}

\section*{Acknowledgements}
L.C. thanks Sandra Cerrai for some helpful discussions and for pointing out the
reference~\cite{misiats.stanzhytskyi.ea:16:existence} during the conference -- \textit{Frontier
Probability Days 2018}.

\addcontentsline{toc}{section}{References}
\printbibliography[title = {References}]
\end{document}